\documentclass[11pt]{article}  
\usepackage[margin=1in]{geometry}

\usepackage{amsthm, amsmath, amsfonts, amssymb, bbm} 
\usepackage{graphicx} 
\usepackage{mathrsfs} 
\usepackage{enumitem} 
\usepackage{float}
\usepackage{caption} 
\usepackage[normalem]{ulem} 
\usepackage{array}
\usepackage{xcolor} 
\graphicspath{{Figures/}} 

\theoremstyle{theorem}
\newtheorem{theorem}{Theorem}
\theoremstyle{lemma}
\newtheorem{lemma}{Lemma}[section]
\theoremstyle{definition}
\newtheorem{defn}{Definition}[section]
\newtheorem{prop}{Proposition}[section]
\theoremstyle{remark}

\DeclareMathOperator\variance{Var}

\newcommand{\mc}[1]{\mathcal #1}
\newcommand{\mb}[1]{\mathbf #1}
\newcommand{\mbb}[1]{\mathbb #1}
\newcommand{\prob}{\mathbb{P}}
\newcommand{\deriv}[1]{\partial_{#1}}

\newcommand{\ceiling}[1]{\lceil #1 \rceil}
\newcommand{\floor}[1]{\lfloor #1 \rfloor}

\newcommand{\nosquare}{\let\qed\relax}	
\newcommand{\eps}{\varepsilon}

\newcommand{\niceBullet}{\raisebox{0.6pt}{\scalebox{0.55}{\textbullet}}\hspace{5pt}}


\newcommand{\canonicalPolygon}			{\mc P}

\newcommand{\treeClassRootShared}			{\mc T^R}
\newcommand{\treeClassFirstLeafShared}		{\mc T^L}
\newcommand{\treeClassNeitherShared}		{\mc T^N}

\newcommand{\genFunRootShared}			{ T^R }
\newcommand{\genFunFirstLeafShared}		{ T^L }
\newcommand{\genFunNeitherShared}		{ T^N }

\newcommand{\deltaDomain}			{\delta(\eps, \phi)}
\newcommand{\rDomain}				{\mc R(\eps, \phi)}
\newcommand{\rmDomain}				{\mc R^m(\eps, \phi)}
\newcommand{\myRdomain}				{\mc R(\eps_0, \phi_0)}
\newcommand{\scaledTstar}			{\tau}




\newcolumntype{L}[1]{>{\vspace{1mm}\raggedright\let\newline\\\arraybackslash\hspace{0pt}}m{#1}}
\newcolumntype{C}[1]{>{\centering\let\newline\\\arraybackslash\hspace{0pt}}m{#1}}

\makeatletter
\renewcommand\section{\@startsection{section}{1}{\z@}%
                                   {-3ex \@plus -1ex \@minus -.2ex}%
                                   {2ex \@plus.2ex}%
                                   {\large\bfseries}}
\makeatother

\begin{document}

\title{\vspace{-1.5cm} 		
			A concentration inequality for the maximum 
			\\ vertex degree in random dissections 
}

\author	{
			Kelvin Rivera-Lopez\footnote{
				Universit\'{e} de Lorraine, CNRS, IECL, F-54000 Nancy, France,
				kelvin.rivera-lopez@univ-lorraine.fr
			} 
			\and 
			Douglas Rizzolo\footnote{
				University of Delaware, Department of Mathematical Sciences, Newark, DE, USA,
				drizzolo@udel.edu
			}
		}

\maketitle

\vspace{-8mm}

\begin{abstract}
	We obtain a concentration inequality for the maximum degree of a vertex in a uniformly random dissection of a polygon. 
	This resolves a conjecture posed by Curien and Kortchemski in 2012.
	Our approach is based on a bijection with dual trees and the tools of analytic combinatorics.
\end{abstract}


\section{Introduction}

In this paper we establish a concentration inequality for the maximum degree of a vertex in a uniformly random dissection of a polygon. This problem has been considered before, with a concentration inequality being established in \cite{bernasconi10}, and improved in \cite{curien14}.  However, the authors of \cite{curien14} believed that their result was not optimal and conjectured that if $\Delta_n$ is the maximum vertex degree in a uniformly random dissection of an $n$-gon, then
$$
	\prob(
			\big|
					\Delta_n - ( \log_b(n) + \log_b \log_b(n) ) 
			\big| 
		> 	c \log_b \log_b(n)
		) 	
			\longrightarrow 
				0,
$$
as $ n \to \infty $, where $b = 1 + \sqrt{2}$ and $c$ is an arbitrary positive number.  Our main contribution is proving that this conjecture is correct.  More precisely, we prove the following result.

\begin{theorem}
\label{mainResult}
	Let $b = 1 + \sqrt 2$, $\lambda_n = \log_b n + \log_b\log_b n $, $\Omega_n \to \infty$,
	and $ \Delta_n $ denote the maximum vertex degree in a random dissection of an $n$-gon.
	Then, as $ n \to \infty $,
	$$
		\prob(|\Delta_n - \lambda_n| \ge \Omega_n)
			=
				O( ( \log n )^{ -1 } + b^{-\Omega_n})
			.
	$$
	
\end{theorem}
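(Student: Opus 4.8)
The plan is to control the two tails of $\Delta_n-\lambda_n$ separately: the upper tail $\prob(\Delta_n\ge\lambda_n+\Omega_n)$ by a first–moment (union) bound over the $n$ vertices, and the lower tail $\prob(\Delta_n\le\lambda_n-\Omega_n)$ by a second–moment argument. Both will reduce, through the dual–tree bijection, to coefficient asymptotics of explicit generating functions, and the $O((\log n)^{-1})$ error will be the price of the relevant degree being of size $\log_b n$.

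\textbf{Reduction to trees.} First I would recall the classical bijection between dissections of the $n$-gon and Schr\"oder trees with $n-1$ leaves (plane trees in which every internal vertex has at least two children), under which the uniform dissection maps to the uniform such tree, leaves corresponding to non-root sides and internal vertices to faces. Under this bijection the degree of a fixed polygon vertex equals, up to an additive constant, the length of the canonical path in the tree joining the two leaves that flank it — for the root vertex, the left spine — and, by the rotational symmetry of the polygon, every vertex has the same degree law. Writing $T(z)$ for the generating function of Schr\"oder trees by leaves, one has $T=z+T^2/(1-T)$ and a square-root singularity at $\rho=3-2\sqrt2=b^{-2}$ with $\scaledTstar:=T(\rho)=1-2^{-1/2}$, so that $T(\rho)/(1-T(\rho))=\sqrt2-1=b^{-1}$; this is where the base $b$ enters. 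Decomposing a tree along the above path and sorting the hanging subtrees according to whether they carry the root side, the flanking leaf, or neither — the classes $\treeClassRootShared,\treeClassFirstLeafShared,\treeClassNeitherShared$ with generating functions $\genFunRootShared,\genFunFirstLeafShared,\genFunNeitherShared$ — expresses the generating function of dissections with $\deg(v)\ge k$ as a product $\genFunRootShared(z)\,\genFunNeitherShared(z)^{\,k-O(1)}\,\genFunFirstLeafShared(z)$ of $\Theta(k)$ rational functions of $T(z)$; for the root vertex this is simply $(T/(1-T))^{k}\,T$.

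\textbf{The core estimate and the upper tail.} The analytic heart of the argument is the estimate
\[
  \prob(\deg(v)\ge k)\;=\;c_1\,k\,b^{-k}\bigl(1+O((\log n)^{-1})\bigr)\qquad(1\le k\le C\log n)
\]
for an explicit constant $c_1$, together with the crude bound $\prob(\deg(v)\ge k)=O(k\,b^{-k})$ valid for all $k$. To prove it I would write the product above as $H_k(T(z))$ for an explicit rational $H_k$, expand $T(z)=\scaledTstar-c\sqrt{1-z/\rho}+\cdots$, and apply a transfer theorem on a pac-man (``Camembert'') domain $\rDomain$ (respectively $\rmDomain$) around $\rho$ with parameters $\deltaDomain$; the leading singular coefficient is $H_k(\scaledTstar)=\scaledTstar\,b^{-k}$ and the $\sqrt{1-z/\rho}$-coefficient is $H_k'(\scaledTstar)$, which has the form $(\text{linear in }k)\cdot b^{-k}$. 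I expect the main obstacle to be exactly that here $k=k(n)\to\infty$: one must bound the remainder in the singular expansion of $H_k(T(z))$ and choose the domain parameters $\deltaDomain$ \emph{uniformly in the growing parameter $k$}, and it is this uniformity that produces $O((\log n)^{-1})$ rather than $o(1)$. Once this is in hand, substituting $k=\lambda_n\pm\Omega_n$ and using $b^{-\lambda_n}=(n\log_b n)^{-1}$ gives $n\,\prob(\deg(v)\ge\lambda_n+\Omega_n)=O(b^{-\Omega_n}+(\log n)^{-1})$, which is the upper-tail bound by the union bound, and also $\mbb E N_k\to\infty$, where $N_k$ counts the vertices of degree $\ge k$ and $k=\lambda_n-\Omega_n$ (for $\Omega_n$ in the relevant range; otherwise the lower-tail probability is $0$ since $\Delta_n\ge2$).

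\textbf{The lower tail.} For this I would run the second-moment inequality $\prob(N_k=0)\le\variance(N_k)/(\mbb E N_k)^2$. This needs the two-point bound $\sum_{i\ne j}\prob(\deg(v_i)\ge k,\,\deg(v_j)\ge k)\le\bigl(1+O((\log n)^{-1})\bigr)\,n^2\,\prob(\deg(v)\ge k)^2$, which I would again obtain from singularity analysis, now of the generating function of doubly-marked dissections: the two canonical paths essentially do not interact, and the correction — which depends on the cyclic distance $|i-j|$ and, once summed over it, contributes the $O((\log n)^{-1})$ — comes out of the same uniform transfer estimates. Granting it, $\variance(N_k)\le\mbb E N_k+O((\log n)^{-1})(\mbb E N_k)^2$, so $\prob(N_k=0)\le(\mbb E N_k)^{-1}+O((\log n)^{-1})=O(b^{-\Omega_n}+(\log n)^{-1})$. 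Combining the two tails then gives the theorem; besides the uniform-in-$k$ singularity analysis flagged above, the remaining delicate point is setting up the doubly-marked decomposition and carrying out the summation over $|i-j|$ with enough precision to keep the error at $O((\log n)^{-1})$.
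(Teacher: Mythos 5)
Your proposal is correct and takes essentially the same route as the paper: dual-tree bijection with vertex degree equal to an inter-leaf path length, a spinal decomposition along that path yielding explicit generating functions, singularity analysis with uniformity in the growing parameter $k$ (which the paper carries out via the Gao--Wormald multivariate transfer lemmas — exactly the uniform control you correctly identify as the main technical obstacle), then a first-moment union bound for the upper tail and a second-moment argument for the lower tail. The surface differences — you track $N_k$, the number of vertices of degree at least $k$, in a Schr\"oder-tree convention, whereas the paper tracks $\zeta_k$, the number of vertices of degree exactly $k$, in trees with a degree-one root, and executes the two-path interaction analysis explicitly through the classes $\mc T^0,\mc T^1,\mc T^2$ — do not change the substance of the argument.
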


In \cite{gao00}, an analogous result for random triangulations was obtained using analytic combinatorics to analyze generating functions.
We use the same approach here but also use a bijection from dissections to their dual trees to make the combinatorics tractable.
Dual trees have been used previously to study random dissections in \cite{curien14} and random triangulations in \cite{devroye1999properties}.  The key observation in our case is that, under this bijection, vertex degrees become the distance between consecutive leaves.  In the case of triangulations this observation was made in \cite{devroye1999properties}.

The paper is organized as follows. In Section 2, we reduce the proof of Theorem \ref{mainResult} to establishing some estimates on the moments of the number of vertices of a fixed degree.
The remainder of the paper is dedicated to obtaining these estimates. 
In Section 3, we introduce the bijection between dissections and their dual trees. 
In Section 4, we study the generating functions of certain classes of trees needed to establish the moment estimates in Section 2.
In Section 5, we employ the techniques of analytic combinatorics to obtain the estimates of Section 2.

\section{Dissections}
In this section, we reduce the study of the maximum vertex degree to one of the number of vertices of a fixed degree. 
We begin by introducing the objects of interest.

\begin{defn}
	For $n \ge 3$, let $\canonicalPolygon_n$ denote the convex $n$-gon in the plane whose vertices are the $n^{th}$ roots of unity. 
	A subset of the plane $d$ is a \emph{dissection} of $\canonicalPolygon_n$ if it is the union of the sides of $\canonicalPolygon_n$ and a collection of diagonals that may intersect only at their endpoints. 
	In this case, we define for $ j = 1, 2, ..., n $, the $j^{th}$ vertex of $d$ as the point $v_j(d) = e^{2 \pi i (n+1-j)/n}$, and the degree of $v_j(d)$, denoted by deg $v_j(d)$, is the total number of diagonals and sides of $\canonicalPolygon_n$ that lie in $d$ and contain $v_j(d)$.
	See Figure \ref{dissectionFigure}.
	For convenience, we will often omit the argument of $ v_j(d) $ when it is clear from context.
	
\end{defn}

\begin{figure}[t]
    \centering
	\def\svgwidth{0.4\textwidth} 
	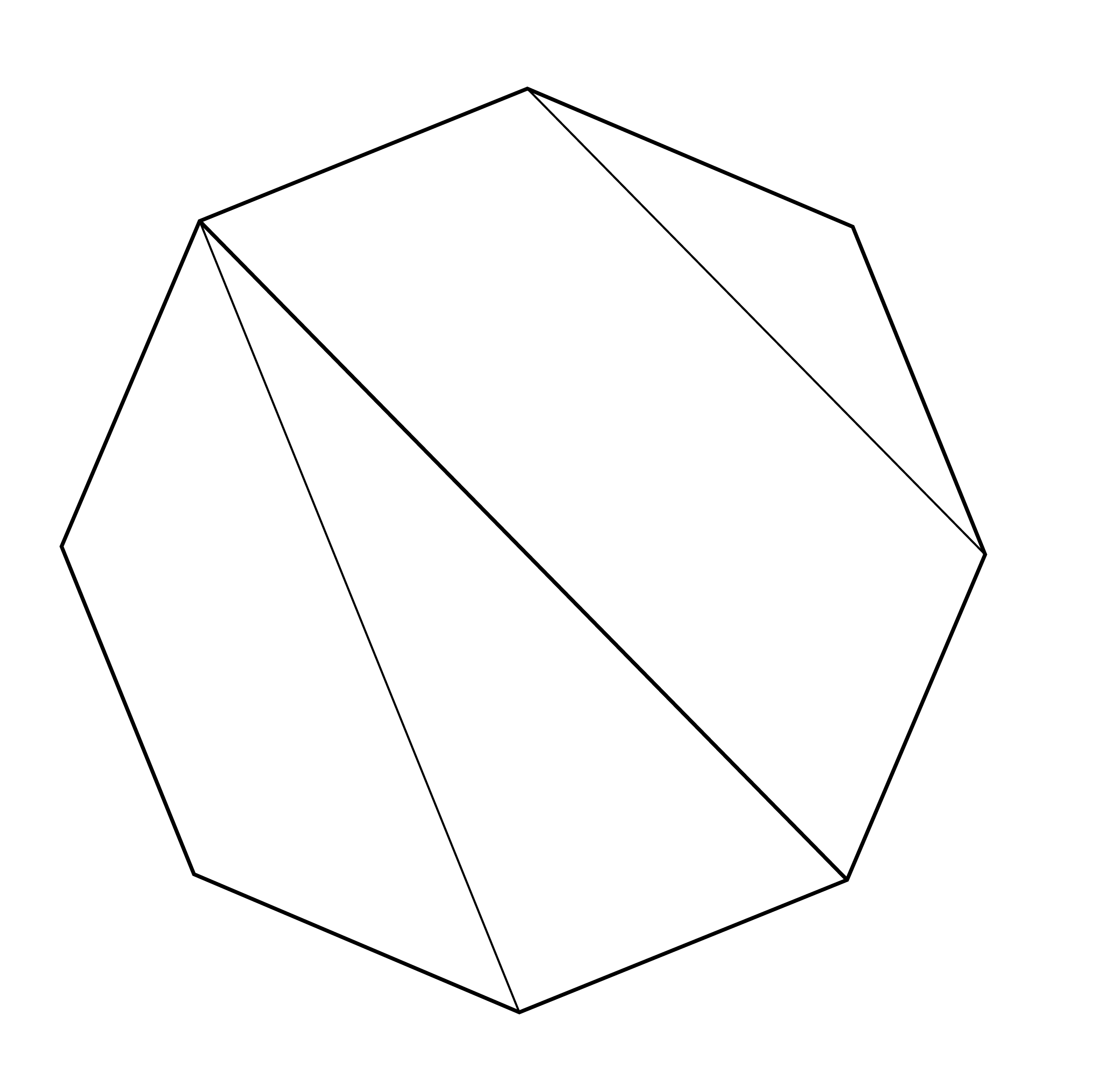
	\caption{A dissection of an octagon.}
	\label{dissectionFigure}
\end{figure}

We denote the set of all dissections of $\canonicalPolygon_{n+1}$ by $\mc D_n$. The subset of these in which $v_1$ has degree $k$ will be denoted by $\mc D_{n, k}$. The collection of pairs $(d, v)$ in which $d$ is a dissection from $\mc D_{n, k}$ and $v \neq v_1$ is a vertex in $d$ with degree $l$ will be denoted by  $\mc D_{n, k, l}$. We will refer to such a pair as a dissection with a \textit{distinguished vertex} $v$. The parameters $n$, $k$, and $l$ are to take integer values no less than 2.

For each $n$, we construct a probability space by equipping $\mc D_n$ with the uniform measure.  
On each space, we define a random variable $\Delta_n$ that maps a dissection $d$ to the maximum vertex degree of $d$. 
In addition, we define a sequence of random variables $\{\zeta_{k, n}\}_{k \ge 2}$ by letting $\zeta_{k, n}$ (or $\zeta_k$ for short) map a dissection $d$ to the number of vertices in $d$ having degree $k$. 
Theorem \ref{mainResult} is a direct consequence of the following lemma.

\begin{lemma}
\label{momentEstimates}
Let $b = 1 + \sqrt 2$. The following estimates hold:
\begin{enumerate}[label = (\roman*)]
	\item
	\label{first moment estimate, all k}
	for every $p \in (0, b)$ there exists some $ M > 0 $ such that
	$$
		\mbb{E}(\zeta_k) 	
			\le 
				M n p^{-k}
	$$
	
	\noindent
	for $ k \ge 2 $ and large $ n $,	
	\item
	\label{first moment estimate, log k}
	for every $ L > 0 $ there exists some $ M > 0 $ such that
	$$
		\left|
			\frac{
        			b^{ k }
        			\,
        			\mbb{E}(\zeta_k) 
				}{
        			2 n
				}
		-
			k
		\right|	
			\le
				M
	$$

	\noindent
	whenever $ k \le L \log n $ and $ n $ and $ k $ are large,
	
	\item
	\label{second moment estimate}
	for every $ L > 0 $ there exists some $ M > 0 $ such that
	$$
		k 
		\left|
    		\frac{ \mbb{E}(\zeta_k(\zeta_k - 1)) }
    		{\mbb{E}(\zeta_k)^{ 2 }}
		-
			1
		\right|
			\le	
				M
	$$
	
	\noindent
	whenever $ k \le L \log n $ and $ n $ and $ k $ are large.
\end{enumerate}

\end{lemma}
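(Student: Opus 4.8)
The plan is to derive all three estimates from exact enumeration of the relevant dissection classes via their dual trees, followed by singularity analysis. First I would set up generating functions: let $D(z) = \sum_n |\mc D_n| z^n$ count dissections by size, and introduce refined generating functions tracking the degree of the distinguished vertex $v_1$ and, for the second moment, a second distinguished vertex. Using the bijection from Section 3 (dissections $\leftrightarrow$ dual trees, with vertex degrees corresponding to distances between consecutive leaves), these refined generating functions become tree generating functions, which I expect to be algebraic and amenable to explicit computation. The key structural point is that a dissection in $\mc D_{n,k}$ decomposes along the $k-1$ diagonals/sides emanating from $v_1$ into $k-1$ smaller dissections (``ears''), so $\sum_n |\mc D_{n,k}| z^n$ should factor as a product of $k-1$ copies of a single ``block'' series times a correction, giving a clean $k$-dependence of the form (something)$^k$.

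Next, for part (i), I would write $\mbb{E}(\zeta_k) = |\mc D_n|^{-1} \sum_{v} |\mc D_{n,k}^{(v)}|$, which by symmetry and the decomposition above equals $|\mc D_n|^{-1} \cdot n \cdot [z^n](\text{generating function for dissections with a marked degree-}k\text{ vertex})$. Singularity analysis of $|\mc D_n|$ (dominant singularity $\rho$, with $\rho^{-1}$ related to $b = 1+\sqrt 2$; in fact the ``large'' dissection of a polygon generating function has its singularity governed by $3 - 2\sqrt 2$, whose reciprocal is $(1+\sqrt2)^2$, consistent with the $b^{-k}$ decay after accounting for the two diagonals at a vertex) gives $|\mc D_n| \sim C \rho^{-n} n^{-3/2}$. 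The marked generating function has the same dominant singularity $\rho$ but its numerator carries a factor that decays geometrically in $k$ like $p^{-k}$ for any $p < b$ — this is exactly where the base $b$ enters. I would get (i) by a uniform-in-$k$ transfer theorem, bounding the sub-dominant contributions; (ii) by extracting the precise constant and the linear-in-$k$ correction (the factor $k$ arises from the $n$ choices of marked vertex combined with a $1/k$-type effect, or more likely from a $\log$-derivative / the number of ways the marked vertex sits among $k-1$ ears), carried out only in the regime $k \le L\log n$ where the geometric factor $b^{-k}$ is only polynomially small and the error terms are genuinely $O(1)$ after multiplying by $b^k/(2n)$.

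For part (iii), the second factorial moment $\mbb{E}(\zeta_k(\zeta_k-1))$ counts ordered pairs of distinct degree-$k$ vertices, so I would enumerate dissections with two marked vertices. The main case — the two marked vertices far apart — contributes $\approx \mbb{E}(\zeta_k)^2$ (this is the ``$1$'' being subtracted), while the correction comes from configurations where the two marked vertices are adjacent or share a diagonal/ear, which is down by a factor $\sim 1/k$; multiplying by the outer $k$ then yields the $O(1)$ bound. Concretely I expect $\mbb{E}(\zeta_k(\zeta_k-1)) = \mbb{E}(\zeta_k)^2 (1 + O(1/k))$ with the error term arising because two degree-$k$ vertices ``compete'' for the same $k-1$ ears when close, a combinatorial exclusion effect. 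The singularity analysis here is the same $\rho$ but requires care to show the two-marked-vertex series has the right leading behavior and that the near-diagonal cross terms are genuinely lower order uniformly for $k \le L\log n$.

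The main obstacle I anticipate is the \emph{uniformity in $k$} of the singularity analysis: ordinary transfer theorems give asymptotics for fixed combinatorial classes, but here the class depends on $k$ (which itself may grow like $\log n$), so I need error bounds that are explicit in $k$ — effectively a quantitative transfer theorem where the constants do not degrade as $k\to\infty$. This means controlling the generating functions not just near the dominant singularity but on a uniform $\Delta$-domain, and tracking how the $k$-fold product structure interacts with the contour-shifting estimate. A secondary difficulty is pinning down the exact constant ``$2$'' in part (ii) and confirming that the linear term is precisely $k$ (not $k + O(1)$ absorbed differently), which requires computing the relevant residues/leading coefficients explicitly rather than just up to constants. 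Once the explicit algebraic form of the dual-tree generating functions from Section 4 is in hand, these become careful but routine computations.
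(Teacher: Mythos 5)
Your roadmap matches the paper's strategy in outline: pass to dual trees, express the first and second factorial moments as ratios of coefficient counts $T_{n,k}/T_n$ and $T_{n,k,k}/T_n$, derive algebraic generating functions by a spine/``ear'' decomposition, and extract coefficients by singularity analysis, treating the ``far apart'' configuration as the dominant contribution to the second moment with the ``close'' configurations down by $O(1/k)$. Your observation that the singularity sits at $3 - 2\sqrt 2 = (1+\sqrt 2)^{-2}$ and the intuition that the geometric spine contributes the $b^{-k}$ factor are both correct, and the decomposition of the two-marked-vertex count into ``disjoint'' vs.\ ``overlapping'' cases is exactly the $T^0$, $T^1$, $T^2$ split the paper uses.

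The genuine gap is that the step you flag as ``the main obstacle I anticipate'' --- a quantitative, uniform-in-$k$ transfer theorem --- is the load-bearing part of the argument, and you leave it unresolved. Standard single-variable transfer theorems do not suffice here because $k$ grows with $n$; one needs coefficient asymptotics for bivariate and trivariate generating functions with singular expansions of the form $(1-x)^{-\alpha}(1-y)^{-\beta}$, valid uniformly for $k \le L\log n$, together with geometric tail bounds for arbitrary $k$. The paper does not prove such a theorem from scratch; it invokes the multivariate singularity framework of Gao and Wormald (their Lemmas 2 and 3, reproduced here as Propositions \ref{prop GW Lemma 2} and \ref{prop GW Lemma 3}), and the entire content of Section 5 is verifying that the rescaled generating functions $T(x/r, y/2s)$ and $T(x/r, y/2s, z/2s)$ satisfy the hypotheses of those lemmas (analyticity on an $\mc R$-domain, the $\widetilde O$ error structure, the $\approx$ singular expansion with the correct leading constant). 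Without supplying that machinery --- or something equivalent --- your plan does not close: the reduction to ``careful but routine computations'' presupposes exactly the tool whose absence you yourself identify. A secondary, smaller imprecision: the linear factor $k$ in part (ii) does not come from a log-derivative or from marking a vertex among $k-1$ ears; it comes from the $(1-y)^{-2}$ singularity of the bivariate series (the spine is a sequence-like construction giving $1/(1-yT^*(x))$), which transfers to a $k^{\beta-1} = k$ factor in the coefficient asymptotics.
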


\begin{proof}[Proof of Theorem 1]

    It suffices to consider the case $\Omega_n = O(\log \log n)$ (for the general case, apply the result to 
    $
    	\Omega'_n 
			= 
				\min(\Omega_n, \log_b\log n)
	$). 
    Setting
    $
     	k_n = \ceiling{\lambda_n} - \floor{\Omega_n},
    $ 
    it follows immediately that
    $$
    	C \log n
			\le
				\lambda_n - \Omega_n
			\le
				k_n
			\le
				\lambda_n - \Omega_n + 2
			\le
				L \log n
    $$

	\noindent
	for some constants $ C, L > 0 $ and large $ n $.
	Let $ M > 0 $ be the maximum of the constants obtained by applying \ref{first moment estimate, log k} and \ref{second moment estimate} with the constant $ L $.
	Using the bound in \ref{first moment estimate, log k}, the inequality $ b = 1 + \sqrt{ 2 } > 1 $, and the identity
	$
		b^{ \lambda_n }
			=
				n \log_b n
			,
	$
	we obtain a lower bound for the sequence
    $
    	\mu_n 
			= 
				\mbb{E}(\zeta_{k_n})
    $:
    for large $ n $,
    \begin{align*}
    	\mu_n 	
    		& \ge
				( k_n - M )
				2 n
				b^{-k_n}
			\\
    		& \ge
				k_n
				n
				b^{- \lambda_n + \Omega_n - 2}  
			\\
    		& \ge
				( C \log n )
				n
				( n \log_b n )^{ -1 }
				b^{ \Omega_n - 2 }
			\\
    		& =
			 	C
				\log (b)
				\,
				b^{ \Omega_n - 2}
			.
    \end{align*}
    
    \noindent 
    We combine this with the bound in \ref{second moment estimate} as follows: for large $ n $,
    \begin{align*}
		\frac{ 
				\variance(\zeta_{k_n}) 
			}{
				\mu_n^{ 2 }
			}
				& = 	
                		\left|
                			\frac{ 
            							\mbb{E}( \zeta_{k_n}(\zeta_{k_n} - 1) ) 
            						- 	\mbb{E}(\zeta_{k_n})^2 
            						+ 	\mbb{E}(\zeta_{k_n})
                				}{
                					\mu_n^{ 2 }
                				}
                		\right|
				\\
				& \le 	
                		\left|
                			\frac{ 
            							\mbb{E}( \zeta_{k_n}(\zeta_{k_n} - 1) ) 
                				}{
                					\mu_n^{ 2 }
                				}
							-
								1
                		\right|
						+
                		\left|
        						\mu_n^{ -1 }
                		\right|
				\\
				& \le 	
        			 	M k_n^{ -1 }
					+
        			 	( C \log b )^{ -1 }
        				\,
        				b^{ 2 - \Omega_n }
				\\
				& = 	
						O( ( \log n )^{ -1 } + b^{-\Omega_n} )
				.
    \end{align*}

    \noindent 
    Applying Markov's Inequality then gives us the bound for the left tail: 
    as $ n \to \infty $,
    \begin{align*}
    	\prob(\Delta_n < \lambda_n - \Omega_n) 	
    			& \le 	
    					\prob\left(\sum_{k \ge \lambda_n - \Omega_n} \zeta_k = 0 \right) 
    			\\
    			& \le 	
    					\prob\left(\zeta_{k_n} = 0 \right) 
    			\\
    			& \le 	
    					\prob( ( \zeta_{k_n} - \mu_n )^2 \ge \mu_n^2) 
    			\\
    			& \le 	
    					\frac{\variance(\zeta_{k_n})}{\mu_n^2} 
    			\\
    			& = 	
    					O( ( \log n )^{ -1 } + b^{-\Omega_n})
				.
    \end{align*}

    Now let 
    $
    	\mu'_k
			= 
				\mbb{E}(\zeta_{k})
			,
    $
	$ p \in ( 1, b ) $, $L' \ge 2 \, (\log p)^{-1} $, and $ M' > 0 $ be the maximum of the constants obtained by applying \ref{first moment estimate, all k} and \ref{first moment estimate, log k} with the constant $ L' $.
	Using \ref{first moment estimate, all k} and \ref{first moment estimate, log k}, 
	the inequality $ 1 < p < b $,
	the bound $ p^{ - L' \log n } = n^{ - L' \log p } \le n^{ -2 } $,
    and 
    the identity $ b^{ \lambda_n } = n \log_b n $,
    we obtain the following bound:
    for large $ n $,
    \begin{align*}
    	\sum_{k \ge \ceiling{ \lambda_n + \Omega_n } }
			\mu'_k 	
				& \le 	
						\sum_{k \ge \ceiling{ \lambda_n + \Omega_n } }^{ \floor{ L' \log n } }
							\mu'_k 
						+ 	\sum_{k \ge \ceiling{ L' \log n } } 
							\mu'_k 
				\\
    			& \le 	
						\sum_{k \ge \ceiling{ \lambda_n + \Omega_n } }^{ \floor{ L' \log n } }
							2 n 
							( M' + k ) 
							b^{ -k }
					+ 	\sum_{k \ge \ceiling{ L' \log n } } 
							M' n 
							p^{-k}
				\\
    			& \le 	
						4 L' 
						\,
						( n \log n )
						\sum_{k \ge \ceiling{ \lambda_n + \Omega_n } }
							\!
							\!
							\!
							\!
							\!
							b^{ -k }
					+ 	M' n
						\sum_{k \ge \ceiling{ L' \log n } } 
							\!
							\!
							\!
							\!
							\!
							p^{-k}
				\\
    			& = 	
						\frac{ 
								4 L'
							}{
								1 - b^{ -1 }
							}
						( n \log n )
						\,
						b^{ - \ceiling{ \lambda_n + \Omega_n } }
					+ 	
						\frac{ 
								M' 
							}{
								1 - p^{ -1 }
							}
						n
						p^{-\ceiling{ L' \log n }}
				\\
    			& \le 	
						\frac{ 
								4 L' 
							}{
								1 - b^{ -1 }
							}						
						( n \log n )
						\,
						b^{ - \lambda_n - \Omega_n }
					+ 	
						\frac{ 
								M'
							}{
								1 - p^{ -1 }
							}
						n
						p^{- L' \log n }
				\\
    			& \le 	
						\frac{ 
								4 L' 
								\log b
							}{
								1 - b^{ -1 }
							}
						\,
						b^{ - \Omega_n }
					+ 	
						\frac{ 
								M'
							}{
								1 - p^{ -1 }
							}
						n^{ -1 }
				\\
    			& = 	
    					O( b^{-\Omega_n} + ( \log n )^{ -1 } )
    \end{align*}

    \noindent 
	Applying Markov's Inequality then gives us the bound for the right tail: as $ n \to \infty $,
    \begin{align*}
    	\prob(\Delta_n > \lambda_n + \Omega_n) 	
				& \le 	
						\prob 	\left(
										\sum_{k > \lambda_n + \Omega_n} 
											\zeta_k 
												\ge 
													1 
								\right) 
				\\
    			& \le 	
						\sum_{k > \lambda_n + \Omega_n} 
							\mu'_k
    			\\
				& = 	
    					O( b^{-\Omega_n} + ( \log n )^{ -1 } )
				.
    \end{align*}
 
    \noindent
    This concludes the proof.       
\end{proof}

\section{Dual Trees}
In this section, we make the connection between the factorial moments of the $\zeta_k$ and the enumeration of trees. 
The first step towards this goal is expressing these moments in terms of the sizes of our dissection classes. This is given in the following result.

\begin{lemma}
\label{momentsAndDissectionClasses}

For $ n, k \ge 2 $, let $ D_n $ and $ D_{ n, k } $ denote the size of 
$ \mc D_n $ and $ \mc D_{ n, k } $ respectively.
The following identities hold:
\begin{enumerate}[label = (\roman*)]
	\item
	\label{first moment identity}
	$\mbb{E}(\zeta_k) 				= 	(n+1)\frac{D_{n, k}}{D_n}$, 
	
	\item
	\label{second moment identity}
	$\mbb{E}(\zeta_k (\zeta_k-1)) 	= 	(n+1)\frac{D_{n, k, k}}{D_n}$.

\end{enumerate}
\end{lemma}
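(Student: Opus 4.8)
The plan is to prove both identities by double counting, using the rotational symmetry of the regular $(n+1)$-gon $\canonicalPolygon_{n+1}$. Since $\mc D_n$ carries the uniform measure and $\zeta_k(d)$ equals the number of indices $j \in \{1, \dots, n+1\}$ with $\deg v_j(d) = k$, expanding the expectation as a sum over $\mc D_n$ and swapping the order of summation gives
\[
	\mbb E(\zeta_k)
		=
			\frac{1}{D_n} \, \#\{ (d, j) : d \in \mc D_n,\ 1 \le j \le n+1,\ \deg v_j(d) = k \},
\]
and, because $\zeta_k(\zeta_k - 1)$ counts ordered pairs of distinct such indices,
\[
	\mbb E(\zeta_k(\zeta_k - 1))
		=
			\frac{1}{D_n} \, \#\{ (d, i, j) : d \in \mc D_n,\ i \ne j,\ \deg v_i(d) = \deg v_j(d) = k \}.
\]
So it suffices to show that the two counting sets on the right have sizes $(n+1) D_{n,k}$ and $(n+1) D_{n,k,k}$ respectively.

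First I would record the basic symmetry. For any indices $a, b \in \{1, \dots, n+1\}$ there is a unique rotation $\rho$ of the plane about the origin with $\rho(v_a) = v_b$; it maps $\canonicalPolygon_{n+1}$ onto itself, hence sends each dissection $d$ to a dissection $\rho \cdot d$, and it preserves degrees: $\deg v_\ell(d) = \deg v_{\rho(\ell)}(\rho \cdot d)$ for all $\ell$, where $\rho(\ell)$ denotes the index with $\rho(v_\ell) = v_{\rho(\ell)}$. To prove \ref{first moment identity}, let $\rho_j$ be the rotation with $\rho_j(v_1) = v_j$. Then $(d, j) \mapsto (\rho_j \cdot d,\, j)$ is a bijection from $\mc D_{n,k} \times \{1, \dots, n+1\}$ onto the set $\{ (d,j) : \deg v_j(d) = k \}$, with inverse $(d', j) \mapsto (\rho_j^{-1} \cdot d',\, j)$; this gives the count $(n+1) D_{n,k}$ and hence the first identity.

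For \ref{second moment identity} the same idea works, but with one point that needs care, which I expect to be the only real obstacle: the number of dissections in which two prescribed vertices both have degree $k$ is not the same for all pairs --- it depends on the cyclic distance between them --- so one cannot multiply by $n+1$ at the level of a single pair. Instead, for each index $i$ let $\tau_i$ be the rotation with $\tau_i(v_i) = v_1$, and consider the map $(d, i, j) \mapsto \bigl( (\tau_i \cdot d,\ v_{\tau_i(j)}),\, i \bigr)$. Since $j \ne i$ we have $\tau_i(j) \ne 1$, so $v_{\tau_i(j)}$ is a genuine distinguished vertex, and both $v_1$ and $v_{\tau_i(j)}$ have degree $k$ in $\tau_i \cdot d$; thus the image lies in $\mc D_{n,k,k} \times \{1, \dots, n+1\}$. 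I would then check that this map is a bijection, with inverse $\bigl( (d', v_{j'}),\, i \bigr) \mapsto ( \tau_i^{-1} \cdot d',\ i,\ \tau_i^{-1}(j') )$, which yields the count $(n+1) D_{n,k,k}$. Once this bookkeeping is in place the second identity follows. Note that although the rotation action on $\mc D_n$ may have fixed points (e.g.\ the dissection with no diagonals), this causes no difficulty, since the argument produces explicit bijections rather than relying on an orbit count.
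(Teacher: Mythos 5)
Your proof is correct and uses essentially the same idea as the paper: double counting via the rotational symmetry of the $(n+1)$-gon, with an explicit bijection that brings a distinguished vertex to $v_1$. The only cosmetic difference is that the paper first partitions $\mc D_n$ by the number $i$ of degree-$k$ vertices and establishes the counting identity stratum by stratum before summing over $i$, whereas you give a single global bijection; both formalizations capture the same combinatorial content.
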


\begin{proof}
    Let $ \mc D^{ (i) }_n $ be the subset of $ \mc D_n $ consisting of dissections with $ i $ vertices of degree $ k $. 
    Let $ \mc D^{ (i) }_{ n, k, k } $ be the subset of $ \mc D_{ n, k, k } $ consisting of pairs $ ( d, v ) $ with 
    $
    	d 
    		\in 
    			\mc D^{ (i) }_n
    		.
    $
    Consider now a pair $(d, (u, v))$ where 
    $
    	d 
    		\in 
    			\mc D^{ (i) }_n
    $
    and $ ( u, v ) $ is an ordered pair of vertices in $d$ with degree $k$. 
    It should be clear that the number of such pairs is given by 
    $
    	D^{ (i) }_n 
    	i ( i - 1 )
    	,
    $
    where 
    $
    	D^{ (i) }_n 
    $
    is the size of $ \mc D^{ (i) }_n $.
    On the other hand, any such pair can be constructed by taking some
    $
    	( d, v_j ) 
    		\in
    			\mc D^{ (i) }_{ n, k, k }
    		,
    $
    distinguishing $ v_1 $ to obtain
    $
    	( d, ( v_1, v_j) ),
    $
    and rotating by a map of the form $z \mapsto z e^{-2\pi i m/(n+1)}$ for some $0 \le m \le n$. 
    Therefore, we have the identity 
    $$
    	D^{(i)}_{n} i (i-1)
    		= 	
    			D^{(i)}_{n, k, k} (n + 1)
    		,
    			\qquad
    			i \ge 0
    		,
    $$ 
    
    \noindent 
    where 
    $
    	D^{(i)}_{n, k, k}
    $
    is the size of $ \mc D^{(i)}_{n, k, k} $.
    Summing this relation over $i$ and dividing by $D_n$ yields the claim in \ref{second moment identity}. 
    The claim in \ref{first moment identity} can be obtained in a similar manner. 
\end{proof}

Our next step is to establish the connection between dissections and their dual trees.
To begin, let us define the types of trees we will be working with.

\begin{defn}
    A \emph{rooted ordered tree} is a graph-theoretic tree with a vertex designated as the \emph{root} and an ordering among the children of any vertex. 
    In such a tree, a non-root vertex is a \emph{leaf} if it has no children.
\end{defn}

We will think of rooted ordered trees as subsets of the upper half of the complex plane by embedding them in such a way that the root is mapped to $z = 0$ and the clockwise orientation agrees with the ordering throughout the tree. 
Up to orientation-preserving homeomorphisms of the upper half plane, a rooted ordered tree then corresponds to a unique subset of the plane. 
We denote the root of a tree $ t $ with $n$ leaves by $\ell_0(t) = \ell_{n+1}(t)$ and its leaves by $\ell_1(t), ..., \ell_n(t)$ (in clockwise order).
As with the vertices of a dissection, we will often omit the argument $ t $ when it is clear from context.

\begin{figure}[t]
    \centering
	\def\svgwidth{0.75\textwidth} 
	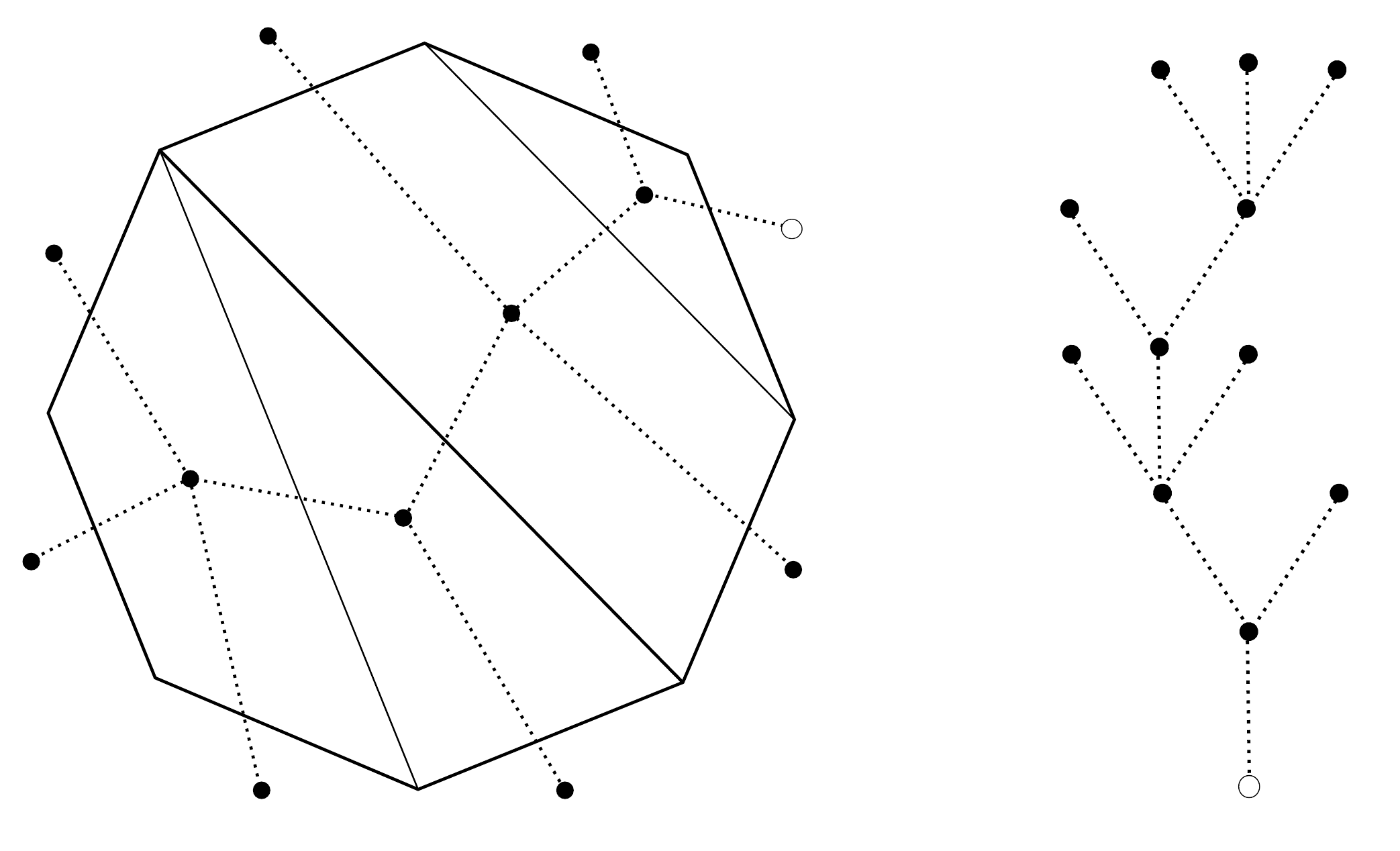
	\caption{A dual tree. Its root is colored white.}
	\label{dualTreeFigure}
\end{figure}

For each dissection $d \in \mc D_n$, we construct its dual tree, $t_d$, in the following way: 
	first, we place a vertex in each inner face of $d$ and connect those vertices whose corresponding faces share an edge; 
	then, we place $n$ vertices in the outer face, assign each to a distinct edge of $\canonicalPolygon_{n+1}$, and connect each to the vertex whose corresponding face shares the assigned edge; 
	finally, we root the tree at the vertex assigned to the edge $(v_1, v_{n+1})$ by applying an orientation-preserving homeomorphism of the plane that maps this vertex to $z = 0$ and the remainder of the tree into the upper half plane (see Figure \ref{dualTreeFigure}). 
In addition, we correspond a dissection with a distinguished vertex to a tree with a distinguished leaf by designating $(t_d, \ell_{j-1})$ as the dual `tree' of $(d, v_j)$.

As in \cite{curien14}, the map sending a dissection to its dual tree is bijective. The image of $\mc D_n$, which we will denote by $\mc T_n$, consists of rooted ordered trees that have $n$ leaves, root degree one, and no non-root vertex with exactly one child. To determine the images of the other classes, we use the following result.

\begin{prop}
\label{degAndLeftmostPaths}

Let $d \in \mc D_n$ and $\rho$ be the graph metric on $t_d$. The following identity holds:
$$
	\deg v_i 
		= 
			\rho(\ell_{i-1}, \ell_i)
		,
			\qquad
			i = 1, ..., n+1
		.
$$
\end{prop}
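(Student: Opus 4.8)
The plan is to prove the identity $\deg v_i = \rho(\ell_{i-1},\ell_i)$ by analyzing the structure of the dual tree near the edges of $\canonicalPolygon_{n+1}$ incident to $v_i$. The key observation is that the diagonals and sides of $d$ emanating from $v_i$ are in natural bijection with the inner faces of $d$ that have $v_i$ on their boundary, together with the two outer edges $(v_{i-1},v_i)$ and $(v_i,v_{i+1})$; more precisely, if $v_i$ has degree $\deg v_i = k$, then the $k-1$ faces incident to $v_i$ are arranged in a ``fan'' around $v_i$, consecutive faces in this fan sharing exactly one of the $k$ edges at $v_i$. Under the duality construction, these $k-1$ inner faces correspond to $k-1$ tree vertices, the two outer edges correspond to the leaves $\ell_{i-1}$ and $\ell_i$, and the ``shared edge'' adjacency of consecutive faces in the fan corresponds precisely to tree edges. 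So the fan around $v_i$ becomes a path in $t_d$ from $\ell_{i-1}$ to $\ell_i$ passing through the $k-1$ dual vertices of the incident faces, a path of length $k$.

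The steps I would carry out, in order: (1) Fix $i$ and let $e_0 = (v_{i-1},v_i)$ and $e_k = (v_i, v_{i+1})$ be the two boundary edges at $v_i$, and let $e_1,\dots,e_{k-1}$ be the diagonals of $d$ incident to $v_i$ listed in angular order around $v_i$ (so that $\deg v_i = k$). (2) Observe that each consecutive pair $e_{j-1}, e_j$ bounds a unique inner face $f_j$ of $d$ (this uses that $d$ is a dissection, so the region around $v_i$ between two angularly consecutive edges contains no further diagonals incident to $v_i$, and that a dissection's inner faces are the connected components of the complement). (3) Note the faces $f_1,\dots,f_{k-1}$ are pairwise distinct, and $f_j$ and $f_{j+1}$ share the edge $e_j$, so their dual vertices are adjacent in $t_d$; also $f_1$ is the face sharing the outer edge $e_0$, so its dual vertex is adjacent to $\ell_{i-1}$, and $f_{k-1}$ shares $e_k$, so its dual vertex is adjacent to $\ell_i$. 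This exhibits a walk of length $k$ from $\ell_{i-1}$ to $\ell_i$ in $t_d$. (4) Argue this walk is a geodesic: since $t_d$ is a tree, it suffices to check the walk is a simple path, i.e. the vertices $\ell_{i-1}, f_1, \dots, f_{k-1}, \ell_i$ are distinct — the $f_j$ are distinct by (3), and the leaves are distinct from each other and from the $f_j$ since leaves are dual to outer edges while the $f_j$ are dual to inner faces. Hence $\rho(\ell_{i-1},\ell_i) = k = \deg v_i$.

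The main obstacle is step (2)–(3): making rigorous the claim that angularly consecutive edges at $v_i$ bound a single common inner face, and that the resulting faces are distinct and have exactly the asserted edge-adjacencies. This requires care about the planar topology of dissections — one must rule out degenerate configurations (e.g. a face touching $v_i$ along two non-adjacent edges, or the two outer edges bounding the same face when $k=2$). The case $k=2$ (no diagonals at $v_i$) should be handled first as a base case: then $\ell_{i-1}$ and $\ell_i$ are both adjacent to the single dual vertex of the unique face containing $v_i$, giving $\rho(\ell_{i-1},\ell_i)=2$. For $k \ge 3$ one can induct on $k$ by ``peeling off'' the triangle-like region bounded by $e_0$, $e_1$, and part of the boundary, or simply appeal to the standard fact that in a dissection the link of a vertex $v_i$ (the cyclic sequence of incident edges and the faces between them) forms a path, which is exactly the combinatorial content being dualized. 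I would state this link fact explicitly and give a short topological justification rather than belaboring it.
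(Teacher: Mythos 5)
Your proposal is correct and follows essentially the same idea as the paper's proof, which simply superimposes $t_d$ onto $d$ and pairs each dissection edge with the unique tree edge crossing it, so that the $\deg v_i$ edges at $v_i$ are matched to the edges of the $\ell_{i-1}$--$\ell_i$ path. You have spelled out the fan-of-faces picture and the simple-path argument that the paper leaves implicit in its one-line appeal to the figure, but the underlying bijection is the same.
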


\begin{proof}

Superimpose $t_d$ onto $d$ (as in Figure \ref{dualTreeFigure}) and assign to each edge in $d$ the unique edge in $t_d$ that it intersects. This assignment maps the edges adjacent to $v_i$ to the edges in $t_d$ that constitute the path from $\ell_{i-1}$ to $\ell_i$. Thus, these two groups of edges are equal in number.
\end{proof}

Proposition \ref{degAndLeftmostPaths} reveals that the image of $\mc D_{n, k}$ under the duality map is the subset of $\mc T_n$ of trees in which the path from the root to the leftmost leaf is of length $k$ (contains $k$ edges). 
We denote this set by $\mc T_{n, k}$. 
The image of $\mc D_{n, k, l}$, which we denote by $\mc T_{n, k, l}$, consists of pairs $(t, \ell_i)$ in which $t$ lies in $\mc T_{n, k}$ and the path from the distinguished leaf $\ell_i$ to $\ell_{i+1}$ has length $l$. 
We will refer to this path as the \textit{$l$-path} of such a tree.

\section{Functional Equations}
\label{sectionFunctionalEquations}
In this section, we derive functional equations involving the generating functions that correspond to our tree classes. 
Our arguments will be purely combinatorial and will require us to consider some additional kinds of trees. 
As with our dual trees, we organize these new trees into sets that are indexed by some tree parameter(s).
These sets are described in Table \ref{table describing trees}.
We also include the sets containing our dual trees so that this table is a complete reference for all of the trees we will encounter.

\begin{table}[t]
	\centering
    \begin{tabular}{ C{1.3cm} | C{2cm} | L{9cm} }
    	\hline
		Set of Trees
    		& 	
    			Parameter Range						
    		& 	
    			Description of an element
    		\\ \hline
    	$ \mc T^*_n $
    		& 	
    													$ n \ge 1  $ 					
    		& 
    				\niceBullet rooted ordered tree with $ n $ leaves 
    				\newline
        			\niceBullet no non-root vertex with exactly one child
    		\\ \hline
    	$ \mc T^*_{n, k} $
    		& 	
    													$ n, k \ge 1 $
    		&
    				\niceBullet element of $ \mc T^*_n $
    				\newline
    				\niceBullet path from root to $ \ell_1 $ is length $ k $
    		\\ \hline
    	$ \mc T_n $
    		& 	
    													$ n \ge 2  $ 					
    		&
    				\niceBullet rooted ordered tree with $ n $ leaves 
    				\newline
        			\niceBullet root degree one
        			\newline
        			\niceBullet no non-root vertex with exactly one child
    		\\ \hline
    	$ \mc T_{ n, k } $
    		&											
														$ n, k \ge 2  $
    		&
    				\niceBullet element of $ \mc T_n $
    				\newline
    				\niceBullet path from root to $ \ell_1 $ is length $ k $
    		\\ \hline
     	$ \mc T_{ n, k, l} $
    		&											
														$ n, k \ge 2  $
    		&
    				\niceBullet pair $ ( t, \ell_i ) $
    				\newline
    				\niceBullet $ t \in \mc T_{ n, k } $
    				\newline
    				\niceBullet $ \ell_i $ is a leaf
    				\newline
    				\niceBullet path from $ \ell_i $ to $ \ell_{ i + 1 } $ is length $ l $
    		\\ \hline
    	$ \mc T^0_{n, k, l} $
    		&											
														$ n, k, l \ge 2  $
    		&
    				\niceBullet element of $ \mc T_{ n, k, l } $
    				\newline				
    				\niceBullet $l$-path and leftmost path share no vertices
    		\\ \hline
    	$ \mc T^1_{n, k, l} $
    		&											
														$ n, k \ge 2  $
    		&
    				\niceBullet element of $ \mc T_{ n, k, l } $
    				\newline				
    				\niceBullet $l$-path and leftmost path share exactly 1 vertex
    		\\ \hline
    	$ \mc T^2_{n, k, l} $
    		&											
														$ n, k \ge 2  $
    		&
    				\niceBullet element of $ \mc T_{ n, k, l } $
    				\newline				
    				\niceBullet $l$-path and leftmost path share exactly 2 vertices
    		\\ \hline
    	$ \treeClassFirstLeafShared_{n, k, l} $
    		& 	
    													$ n, k, l \ge 2 $
    		&
    				\niceBullet element of $ \mc T^2_{ n, k, l } $
    				\newline				
    				\niceBullet $l$-path and leftmost path share $ \ell_1 $
    		\\ \hline
    	$ \treeClassRootShared_{n, k, l} $
    		& 	
    													$ n, k, l \ge 2 $
    		&
    				\niceBullet element of $ \mc T^2_{ n, k, l } $
    				\newline				
    				\niceBullet $l$-path and leftmost path share $ \ell_0 $
    		\\ \hline
    	$ \treeClassNeitherShared_{n, k, l} $
    		& 	
    													$ n, k, l \ge 2 $
    		&
    				\niceBullet element of $ \mc T^2_{ n, k, l } $
    				\newline				
    				\niceBullet $l$-path and leftmost path do not share $ \ell_1 $ or $ \ell_0 $
    		\\ \hline
    \end{tabular}

    \captionsetup{ width = 0.9\textwidth }
	\caption{
			Each row defines a type of tree that arises in our analysis.
			}
	\label{table describing trees}
\end{table}

A set of trees will always be denoted by a calligraphic symbol and the non-calligraphic version of that symbol will denote the size of that set.
Each row in our table gives rise to an ordinary generating function whose coefficients are the sizes of the sets appearing in that row, as indicated by the parameter range.
We denote this generating function using a non-calligraphic version of the symbol used for the sets.
For example, the set $ \mc T_{n, k } $ has size $ T_{ n, k } $ and the row containing the sets of the form
$
	\mc T_{n, k}
$
gives rise to the generating function
$
	T(x,y) 
		= 
			\sum_{n,k \ge 2} 
				T_{n,k}
				x^n
				y^k
		.
$
The coefficient of $x^n$ in a generating function $G(x)$ will be denoted by $[x^n]G$.
We also use the natural multivariable extension of this notation.   
For convenience, we extend the definition the coefficients of a generating function to be zero.

\begin{prop}
\label{subbranchRecursion}
The following relation holds:
$$
	T(x, y) = \frac{x y^2 \, T^*(x)}{1 - y T^*(x)}
$$
\end{prop}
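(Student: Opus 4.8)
The plan is to decompose a tree in $\mc T_{n,k}$ according to the structure along its leftmost path, i.e.\ the path from the root to $\ell_1$, which by definition has length $k$. The root has degree one, so the leftmost path begins root $\to$ child, and then continues through $k-1$ further edges to reach $\ell_1$. The idea is that at each internal vertex along this path (there are $k-1$ of them, namely the non-root, non-leaf vertices on the path), the tree branches off to the \emph{right} of the leftmost path, and each such right-branching is itself an arbitrary tree from $\mc T^*$ hanging off that vertex; the vertex on the leftmost path also has the ``next'' leftmost-path edge as one of its children, placed leftmost.

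First I would set up the bijection precisely. A tree $t \in \mc T_{n,k}$ is encoded by: the root edge; then a sequence of $k-1$ vertices $w_1, \dots, w_{k-1}$ down the leftmost path (with $w_{k-1}$'s leftmost child being $\ell_1$); and at each $w_i$ the list of subtrees hanging to the right of the leftmost-path edge. Since no non-root vertex may have exactly one child, each $w_i$ must have at least one child besides its leftmost-path child, so at each $w_i$ there is a nonempty sequence of subtrees in $\mc T^*$ to the right. Wait — I should double check the very last vertex: $w_{k-1}$ has $\ell_1$ as a child, and needs at least one more child, each a tree in $\mc T^*$. And for $i < k-1$, $w_i$ has its leftmost-path child plus at least one $\mc T^*$-subtree to the right. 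So in all cases each of the $k-1$ path-vertices contributes a nonempty ordered sequence of $\mc T^*$-trees. Translating to generating functions: the leftmost path of length $k$ contributes a factor accounting for $k$ edges; using $y$ to mark path length, a path of $k$ edges through $k-1$ branching vertices, each carrying a nonempty sequence of $\mc T^*$-subtrees, gives $\sum_{k \ge 2} y^k \big(\tfrac{T^*(x)}{1 - T^*(x)}\big)^{k-1}$ — hmm, but that doesn't match. Let me instead mark things so the algebra lands on $\frac{xy^2 T^*(x)}{1 - yT^*(x)}$.

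The cleanest route: think of $t \in \mc T_{n,k}$ as the root edge (one edge, contributing $y$ and no leaf, hence a bare $y$ — actually the root is not a leaf, so no $x$), followed by the subtree rooted at the root's unique child $c$. The vertex $c$ lies on the leftmost path, which now has $k-1$ remaining edges below $c$; $c$ must have at least two children (leftmost-path child plus extras) unless $k-1 = 1$ and $c = \ell_1$... but $c$ is a leaf only if it has no children, i.e.\ $k=1$, excluded. I would argue that the subtree at $c$, as a ``tree with a marked leftmost path of length $k-1$ whose branching vertices each carry a nonempty right-sequence of $\mc T^*$-trees, terminating at a leaf,'' is enumerated by $\frac{y \cdot (\text{stuff})}{1 - y T^*(x)}$, and that the terminal leaf $\ell_1$ supplies the single factor of $x$ while contributing the final edge's $y$. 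Carrying this out, each of the $k-1$ vertices $w_i$ contributes a factor $y T^*(x) \cdot \frac{1}{1-T^*(x)}$? No — I want a single $T^*$ per vertex to get the stated formula, so the right sequence at each vertex must be \emph{exactly one} $\mc T^*$-tree, not a nonempty list. This tells me the correct decomposition peels the leftmost path \emph{one edge at a time}: a tree in $\mc T$ with leftmost-path length $k \ge 2$ either (base case $k=2$) or recursively, and the geometric series $\frac{1}{1-yT^*(x)}$ precisely encodes ``repeatedly attach a $\mc T^*$-subtree and extend the path by one edge.'' The bijective claim is then: $t \mapsto$ (the $\mc T^*$-tree hanging to the right at $c$, if any / the decomposition of the rest), and one reads off $T(x,y) = xy^2 T^*(x) \cdot \sum_{j \ge 0} (yT^*(x))^j = \frac{xy^2 T^*(x)}{1 - yT^*(x)}$: the $x$ marks the leaf $\ell_1$, the $y^2$ the first two leftmost-path edges, $T^*(x)$ the mandatory right-subtree at the branching vertex nearest $\ell_1$, and each further term $(yT^*(x))^j$ an extra path-extension.

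The main obstacle I expect is getting this decomposition \emph{exactly} right — in particular, correctly handling the constraint that no non-root vertex has exactly one child (which forces the branching at every leftmost-path vertex and is the reason each extension carries a full $T^*(x)$ rather than something like $T^*(x)-1$ or a sequence), pinning down the base case $k=2$ so the bijection is clean, and making sure the ``right-branching subtrees are arbitrary elements of $\mc T^*$'' claim is justified (i.e.\ that the root-degree-one condition on $\mc T_n$ does not propagate to these subtrees, which it does not, since it only constrains the global root). Once the bijection is stated correctly, translating to the generating-function identity and summing the geometric series is routine.
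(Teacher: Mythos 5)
Your final decomposition is the paper's: peel off the leftmost path edge by edge, with each of the $k-1$ internal path vertices contributing one factor of $T^*(x)$, giving $T_{n,k}=[x^{n-1}]\,T^*(x)^{k-1}$ and hence $T(x,y)=x\sum_{k\ge 2}y^kT^*(x)^{k-1}$. So the approach matches, and the formula you land on is right.

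However, the one substantive point — why each path vertex carries \emph{exactly one} $T^*(x)$ rather than a nonempty sequence of them — is exactly what you flag as ``the main obstacle'' and you arrive at it by fitting to the target formula rather than by a combinatorial argument. The resolution, which is the paper's ``spinal subtree'' construction, is: the object counted by $T^*(x)$ at a path vertex $w_i$ is \emph{$w_i$ itself, re-rooted, together with all of its descendants not reached through the leftmost path} — a single tree, not the list of subtrees rooted at $w_i$'s non-path children. This matters for two reasons. First, the subtrees rooted at the children of $w_i$ are generally \emph{not} elements of $\mc T^*$: a leaf child of $w_i$ yields a one-vertex tree with zero leaves, which lies in no $\mc T^*_m$ with $m\ge 1$, so the ``nonempty sequence of $\mc T^*$-trees'' encoding you first wrote down does not parse. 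Second, re-rooting at $w_i$ makes the conditions line up exactly with the definition of $\mc T^*$: the root of the spinal subtree is exempt from the no-single-child condition (so it may have any number $\ge 1$ of children), it has at least one child because $w_i$ had at least two children in $t$ (its path child plus, by the no-single-child condition on $t$, at least one more), and every non-root vertex of the spinal subtree inherits the condition from $t$. Conversely any element of $\mc T^*_{m}$, $m\ge 1$, can occur, and the leaf counts satisfy $\sum m_i=n-1$ (all leaves of $t$ except $\ell_1$ are distributed among the spinal subtrees). With that identification the bijection is clean and the rest is the routine generating-function computation you describe.
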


\begin{proof}

Fix $ n, k \ge 2 $ and  $ t \in \mc T_{n, k} $.
We obtain $k-1$ disjoint trees by deleting from $ t $
	the root, 
	the first leaf, and 
	the edges of the leftmost path.
We then root each of these trees at the vertex it contains from the leftmost path of $t$ and have them inherit the ordering in $ t $.
In addition, we order these trees into the tuple $(t_1, ..., t_{k-1})$ so that the root of $t_i$ is a vertex that was distance $i$ from the root of $t$. 
These trees, as well as their copies within $ t $, will be referred to as the \textit{spinal subtrees} of $ t $.
See Figure \ref{figSpinalDecomposition}.

\begin{figure}[t]
	\centering
		 \def\svgwidth{0.75\textwidth}
		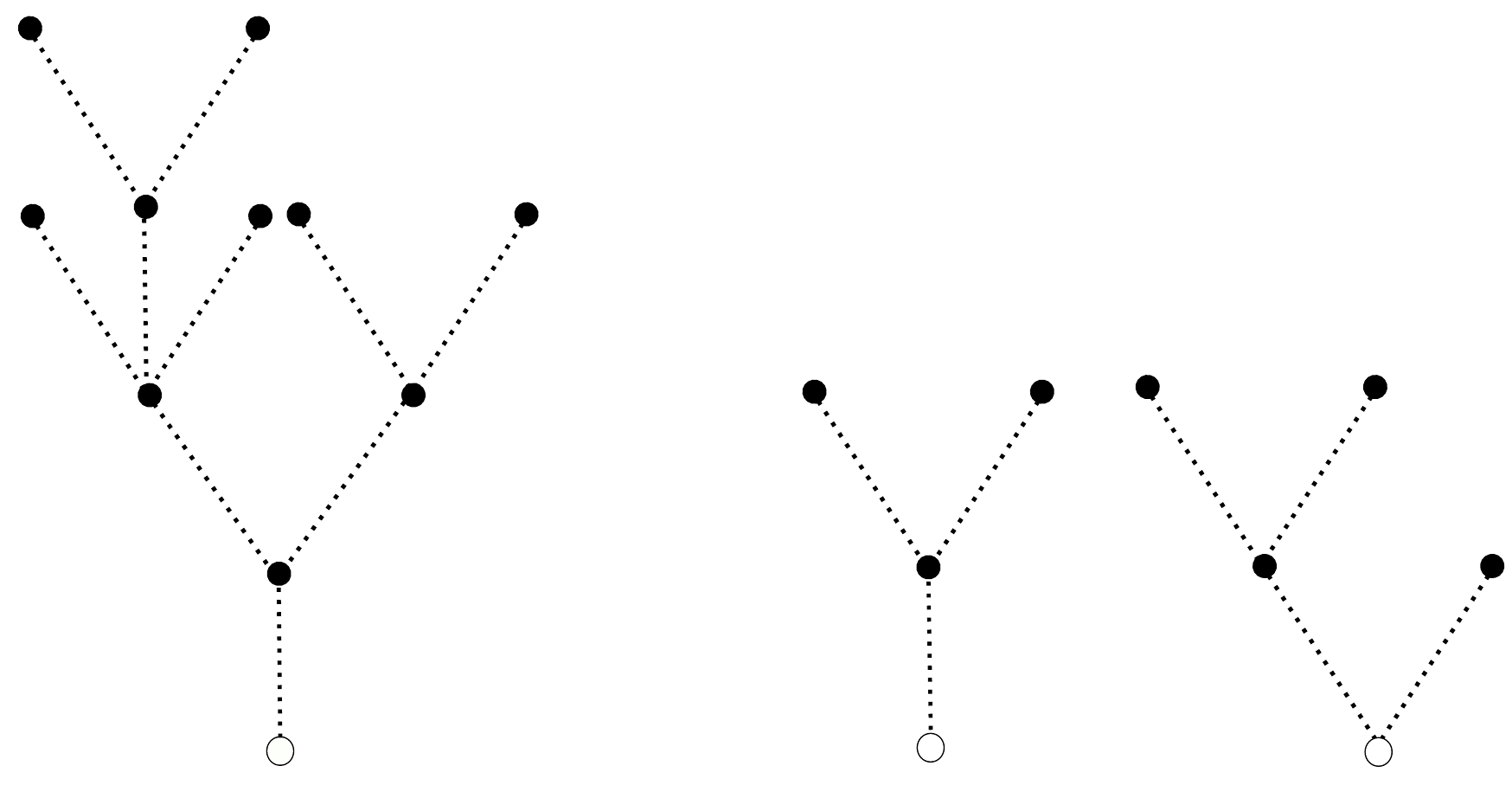
	\caption{Decomposing a tree into its spinal subtrees.}
	\label{figSpinalDecomposition}
\end{figure}

Observe that each $t_i$ lies in some $\mc T^*_{m_i}$ and these values must satisfy $\sum m_i = n-1$. 
On the other hand, all such $ ( k - 1 ) $-tuples of trees are the spinal subtrees of a unique tree in $\mc T_{n, k}$. 
As a result, the decomposition of a tree into its spinal subtrees is a bijection.
In particular, its domain and range have equal size:

\begin{align*}
	T_{n, k} 	& = 	\sum_{m: \sum m_i = n-1} T^*_{m_1} \cdot ... \cdot T^*_{m_{k-1}}, \\
				& = 	[x^{n-1}]\, T^*(x)^{k-1}.
\end{align*}

\noindent
Notice that the above relation holds for $ n, k \ge 2 $. 
Multiplying by $y^k$ and summing over $k$ gives us that 
\begin{align*}
	\sum_{k \ge 2} T_{n, k} y^k 	= 	[x^n] T(x, y)
		& = 	[x^n]\, x \sum _{k\ge 2} T^*(x)^{k-1} y^k \\
		& = 	[x^n]\, xy^2T^*(x) \sum _{k\ge 2} T^*(x)^{k-2} y^{k-2} \\
		& = 	[x^n]\, xy^2T^*(x) (1-yT^*(x))^{-1}
\end{align*}

\noindent 
for all $ n \ge 2 $. As a result, the corresponding generating functions are identical.
\end{proof}

\begin{prop}
\label{addRootRecursion}
The following relation holds:
\begin{align*}
	T^*(x, y) 	& = 	xy + T(x, y) (1 + 1/y)\\
				& = 	x y \frac{1+T^*(x)}{1 - y T^*(x)}
\end{align*}
\end{prop}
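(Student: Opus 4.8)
\emph{Proof strategy.} The plan is to establish the first identity by a combinatorial bijection and then to deduce the second one from Proposition~\ref{subbranchRecursion} by simplifying a rational expression.

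For the first identity I would fix $n,k\ge 1$ and split $\mc T^*_{n,k}$ according to the degree of the root, treating the single tree consisting of the root joined to one leaf (the unique element with $n=k=1$) separately; it contributes the term $xy$. For $(n,k)\ne(1,1)$ I claim that the trees in $\mc T^*_{n,k}$ of root degree one are precisely the elements of $\mc T_{n,k}$. Indeed, such a tree has $n\ge 2$ (otherwise it is the excluded tree), so its unique child is not a leaf and, being a non-root vertex, has at least two children; hence the tree satisfies every condition defining $\mc T_{n,k}$, and in particular its leftmost path has length $k\ge 2$. The reverse inclusion $\mc T_{n,k}\subseteq\mc T^*_{n,k}$ is immediate from the definitions. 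For the trees in $\mc T^*_{n,k}$ of root degree at least two I would use the ``add a root'' map: attach a new root whose only child is the old root. The old root then has at least two children together with one parent edge, so it is an admissible non-root vertex; the resulting tree has root degree one, still has $n$ leaves and forbids non-root vertices with exactly one child, and its leftmost path now has length $k+1$, so it lies in $\mc T_{n,k+1}$. Conversely, in any tree of $\mc T_{n,k+1}$ the root has a unique child, which is not a leaf (as $n\ge 2$) and so has at least two children; deleting the root and promoting this child to be the new root inverts the previous map and lands in the root-degree-$\ge 2$ part of $\mc T^*_{n,k}$. Hence $T^*_{n,k}=T_{n,k}+T_{n,k+1}$ for $(n,k)\ne(1,1)$, while $T^*_{1,1}=1$ and $T_{1,1}=T_{1,2}=0$.

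Multiplying these relations by $x^n y^k$ and summing over $n,k\ge 1$ (using the convention that the coefficients vanish outside the declared index ranges) turns the $T_{n,k}$ terms into $T(x,y)$, the $T_{n,k+1}$ terms into $y^{-1}T(x,y)$, and the exceptional tree into $xy$, which gives the first identity $T^*(x,y)=xy+T(x,y)(1+1/y)$. For the second identity I would substitute $T(x,y)=xy^2 T^*(x)/(1-yT^*(x))$ from Proposition~\ref{subbranchRecursion}, put everything over the common denominator $1-yT^*(x)$, and use $1-yT^*(x)+(y+1)T^*(x)=1+T^*(x)$ in the numerator. The only delicate point in the whole argument is the bookkeeping at degenerate parameter values: one must check that the lone ``root--leaf'' tree is exactly what is missed by $T(x,y)(1+1/y)$, and that the ``no non-root vertex with exactly one child'' condition is preserved in both directions of the add-a-root bijection. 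Carefully tracking the root-degree conditions resolves both, and I expect this to be the main (if modest) obstacle.
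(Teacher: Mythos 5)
Your proof is correct and follows essentially the same route as the paper: establish the coefficient identity $T^*_{n,k}=T_{n,k}+T_{n,k+1}$ (plus the exceptional root--leaf tree) via an add-a-root bijection onto $\mc T_{n,k+1}$, then pass to generating functions and substitute Proposition~\ref{subbranchRecursion}. The paper phrases the bijection as inserting a new rightmost child of the root and re-rooting there, which yields the same tree as your ``attach a parent'' map, and your explicit check that the root-degree-one elements of $\mc T^*_{n,k}$ are exactly $\mc T_{n,k}$ is a point the paper leaves implicit.
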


\begin{proof}
Fix $n \ge 2$, $ k \ge 1 $, and $ t \in \mc T^*_{n, k} \setminus \mc T_{n, k} $.
Letting $ u $ denote the root of $ t $, we construct a new tree $\hat{t}$ from $ t $ by 
	inserting a new vertex $ v $ as the rightmost child of $ u $,
	and then designating $ v $ as the root of $\hat{t}$ and using the planar order inherited from $t$.

It follows from this construction that the tree $\hat{t}$ lies in $ \mc T_{n, k+1}$ and the map $t \mapsto \hat{t}$ is a bijection onto this set.
As a result, the identity
$$
	T^*_{n, k} 	= 	T_{n, k} + T_{n, k+1}
$$

\noindent 
holds for $n \ge 2$ and $ k \ge 1$. 
When $n = 1$ and $ k \ge 1 $, we have
$$
	T^*_{n, k} 	= 	\mathbbm{1}(k = 1)
$$

\noindent
since $ \mc T^*_1 $ contains only the tree whose leftmost path is exactly one edge.
Setting $T_{1, k} = 0$ for all $k$, these relations can be written concisely as
$$
	T^*_{n, k} 
		= 
			\mathbbm{1}(n = k = 1) + T_{n, k} + T_{n, k+1},
		\qquad
			n, k \ge 1
		.
$$
	
\noindent
The corresponding statement for generating functions,
$$
	T^*(x, y)	=	xy + T(x, y) + T(x, y)/y,
$$
	
\noindent 
is the first of the given identities. 
We obtain the other by applying Proposition \ref{subbranchRecursion}:
\begin{align*}
	T^*(x, y)	
			& = 	
					xy + \frac{x y^2 \, T^*(x)}{1 - y T^*(x)} + \frac{x y \, T^*(x)}{1 - y T^*(x)}
			, 
			\\
			& = 	
					xy \frac{1 + T^*(x)}{1 - y T^*(x)}
			.
\end{align*}
\end{proof}

\begin{prop}
\label{firstLeafSharedIdentity}
The following relation holds:
\begin{align*}
	\genFunFirstLeafShared(x, y, z) 	
			& = 	yz T^*(x, z) (T(x, y) + xy) \\
			& = 	\frac{1+T^*(x)}{1 - z T^*(x)} \frac{x^2 y^2 z^2}{1 - y T^*(x)}.
\end{align*}
\end{prop}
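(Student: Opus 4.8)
The plan is to give a bijective decomposition of the trees in $\treeClassFirstLeafShared_{n,k,l}$ built on the spinal decomposition of Proposition~\ref{subbranchRecursion}, then read off the generating function and simplify using Propositions~\ref{subbranchRecursion} and~\ref{addRootRecursion}.

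First I would pin down what the defining conditions of $\treeClassFirstLeafShared_{n,k,l}$ mean for the tree. Writing the leftmost path as $\ell_0 = u_0, u_1, \dots, u_k = \ell_1$, note that a leaf can lie on the $l$-path (which joins the distinguished leaf $\ell_i$ to $\ell_{i+1}$) only as one of its two endpoints; hence sharing the vertex $\ell_1$ forces $i = 1$, so the distinguished leaf is $\ell_1$. The intersection of the two paths is a sub-path of the leftmost path having $u_k = \ell_1$ as an endpoint, so it has the form $\{u_k, u_{k-1}, \dots, u_j\}$, and requiring exactly two shared vertices is equivalent to $j = k - 1$: the common ancestor of $\ell_1$ and $\ell_2$ is the parent $u_{k-1}$ of $\ell_1$.

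Next I would apply the spinal decomposition of $t \in \mc T_{n,k}$ from Proposition~\ref{subbranchRecursion}, a bijection onto tuples $(t_1, \dots, t_{k-1})$ with $t_i \in \mc T^*_{m_i}$, $\sum m_i = n-1$, and $t_i$ rooted at the spine vertex $u_i$. The key point is that $\ell_2$ is exactly the leftmost leaf of the spinal subtree $t_{k-1}$: the children of $u_{k-1}$ in $t_{k-1}$ are its children in $t$ other than $\ell_1$, and the leftmost of these leads down to $\ell_2$. Consequently the $l$-path is the edge $u_{k-1}\ell_1$ followed by the root-to-leftmost-leaf path of $t_{k-1}$, so the condition above amounts to saying that $t_{k-1}$ has leftmost-path length $l-1$, i.e.\ $t_{k-1} \in \mc T^*_{m_{k-1},\,l-1}$, while $t_1, \dots, t_{k-2}$ are arbitrary. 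This produces a bijection between $\treeClassFirstLeafShared_{n,k,l}$ and the set of tuples $(t_1,\dots,t_{k-1})$ with $t_i \in \mc T^*_{m_i}$ for $i \le k-2$, $t_{k-1} \in \mc T^*_{m_{k-1},\,l-1}$, and $\sum_{i=1}^{k-1} m_i = n-1$. I would check separately the degenerate case $k = 2$ (no unconstrained subtrees) and that the reconstruction always yields a valid element of $\mc T_{n,k}$ with $\ell_1$ as leftmost leaf, both of which follow directly from Proposition~\ref{subbranchRecursion}.

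Passing to generating functions, the bijection gives
$$
	\genFunFirstLeafShared_{n,k,l}
		=
			\sum_{m_1 + \dots + m_{k-1} = n-1}
				T^*_{m_1} \cdots T^*_{m_{k-2}}\, T^*_{m_{k-1},\, l-1},
$$
and summing against $x^n y^k z^l$ — a factor of $x$ for the deleted leaf $\ell_1$, a geometric series in $y T^*(x)$ over the arbitrary subtrees, and $\sum_{l \ge 2} z^l [y^{l-1}] T^*(x,y) = z\, T^*(x,z)$ from the last subtree — yields $\genFunFirstLeafShared(x,y,z) = xy^2 z\, T^*(x,z)/(1 - y T^*(x))$. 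Since $T(x,y) + xy = xy/(1 - y T^*(x))$ by Proposition~\ref{subbranchRecursion}, this is the first claimed identity, and replacing $T^*(x,z)$ by $xz(1 + T^*(x))/(1 - z T^*(x))$ from Proposition~\ref{addRootRecursion} gives the second. I expect the only real work to be in the bijection — in particular, confirming that ``the $l$-path and the leftmost path share exactly $\ell_1$ and its parent'' translates precisely into the single constraint on $t_{k-1}$, and handling the edge cases; the subsequent manipulation of generating functions is routine.
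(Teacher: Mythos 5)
Your proof is correct and follows essentially the same route as the paper: both identify the distinguished leaf as $\ell_1$, observe that the $l$-path minus its edge at $\ell_1$ is the leftmost path of the last spinal subtree (so that subtree is constrained to have leftmost-path length $l-1$ while everything else is free), and read off the convolution. The only cosmetic difference is that you retain the full spinal decomposition into $k-1$ subtrees, whereas the paper re-packages the unconstrained part as a single tree in $\mc T_{n-m,k-1}$ by merging the last two spine edges; the resulting generating-function identity is the same.
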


\begin{proof}
Fix a pair $ (t, \ell_1) $ in $ \treeClassFirstLeafShared_{n, k, l}$ and let $t_1$ be the last spinal subtree of $t$. 
Construct a second tree, $t_2$, by deleting the copy of $ t_1 $ in $ t $, and merging the last two edges of the leftmost path. 

Observe that the leftmost path of $t_1$ is essentially the $l$-path of $t$ with one edge removed.
Therefore, $ t_1 $ lies in $ \mc T^*_{m, l-1}$ for some $m$. 
Similarly, the leftmost path of $t_2$ is formed by merging two edges in the leftmost path of $t$, so $ t_2 $ lies in $ \mc T_{n-m, k-1}$ when $k > 2$ and in $ \mc T^*_{1, 1}$ when $k = 2$. 
In either case, the map $ ( t, \ell_1 ) \mapsto (t_1, t_2)$ is a bijection between the relevant sets and we have the relations
\begin{align}
	\genFunFirstLeafShared_{n, 2, l} 	&=		T^*_{n-1, l-1}, 									\label{T^Lidentity1} \\
	\genFunFirstLeafShared_{n, k, l} 	&=		\sum_{m=1}^{n-2} T^*_{m, l-1} T_{n-m, k-1}, 		\label{T^Lidentity2}
\end{align}

\noindent 
which hold for $n, l \ge 2$ and $ k \ge 3$. Introducing a factor of $y^2 z^l$ in (\ref{T^Lidentity1}) and summing over $l$, we have that
\begin{align*}
	\sum_{l \ge 2} \genFunFirstLeafShared_{n, 2, l}	y^2 z^l	& =		y^2 \sum_{l \ge 2} T^*_{n-1, l-1} z^l  \\
		& = 	[x^n]\, x y^2 z T^*(x, z)
\end{align*}

\noindent 
for $n\ge 2$. 
Similarly, from (\ref{T^Lidentity2}) we obtain the identity
\begin{align*}	
	\sum_{\substack{l \ge 2 \\ k \ge 3}} 
		\genFunFirstLeafShared_{n, k, l}	
		y^k z^l
			& =		
					\sum_{m=1}^{n-2}	
						\left(\sum_{l \ge 2} T^*_{m, l-1} z^l \right) 
						\left(\sum_{k \ge 3} T_{n-m, k-1} y^k \right)
			\\
			& = 
					\sum_{m=1}^{n-2} 
						[x^m]\, z \, T^*(x, z) 
						[x^{n-m}]\, y T(x, y) 
			\\
			& = 
					[x^n]\, 
					yz T^*(x, z) T(x, y)
\end{align*}

\noindent
for $n\ge 2$. 
We can combine these into the single relation
$$
	\sum_{\substack{l \ge 2 \\ k \ge 2}} 
		\genFunFirstLeafShared_{n, k, l}	
		y^k z^l	
			=	
				[x^n] 
				yz 
				T^*(x, z) 
				(xy + T(x, y))
			,
$$

\noindent 
which holds for $n\ge 2$. 
Writing this in terms of generating functions, we obtain the first result.
Applying Propositions \ref{subbranchRecursion} and \ref{addRootRecursion} yields the second result:
\begin{align*}
	\genFunFirstLeafShared(x, y, z) 	& = 	y z\,	 \frac{x z (1+T^*(x))}{1 - z T^*(x)} \left(xy + \frac{x y^2 \, T^*(x)}{1 - y T^*(x)}\right), \\
						& = 			 \frac{xyz^2 (1+T^*(x))}{1 - z T^*(x)} \frac{x y}{1 - y T^*(x)} .
\end{align*}
\end{proof}

\begin{prop}
\label{twoSharedRecursion}
The following relation holds:
$$
	T^2(x, y, z) = 2 \genFunFirstLeafShared(x, y, z) + \frac{\genFunFirstLeafShared(x, y, z)^2}{xyz}
$$
\end{prop}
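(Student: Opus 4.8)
The plan is to analyze the structure of an element $(t,\ell_i)$ of $\mc T^2_{n,k,l}$ according to \emph{which} vertices the $l$-path and the leftmost path share. By definition of $\mc T^2_{n,k,l}$, exactly two vertices are shared, and since both paths are paths in a tree, these two shared vertices must be consecutive on each path; in particular the two shared vertices form an edge that lies on both the leftmost path and the $l$-path. The three sets $\treeClassFirstLeafShared_{n,k,l}$, $\treeClassRootShared_{n,k,l}$, $\treeClassNeitherShared_{n,k,l}$ in Table~\ref{table describing trees} partition $\mc T^2_{n,k,l}$ according to whether the shared edge contains $\ell_1$, contains $\ell_0$ (the root), or neither. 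So the first step is to record the set identity $T^2_{n,k,l} = \genFunFirstLeafShared_{n,k,l} + \genFunRootShared_{n,k,l} + \genFunNeitherShared_{n,k,l}$, hence $T^2 = \genFunFirstLeafShared + \genFunRootShared + \genFunNeitherShared$ at the level of generating functions.

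Next I would identify $\genFunRootShared$ with $\genFunFirstLeafShared$ by a symmetry/reversal argument. A tree in $\mc T_{n,k}$ has root degree one, so the root $\ell_0$ has a unique child; the leftmost path enters the tree through that child. An element of $\treeClassRootShared$ is one where the $l$-path from $\ell_i$ to $\ell_{i+1}$ climbs all the way up to the root $\ell_0$ (and shares exactly the edge at the root with the leftmost path), which forces $\ell_{i+1} = \ell_0$, i.e. $i = n$. One should be able to build a bijection $\treeClassRootShared_{n,k,l} \to \treeClassFirstLeafShared_{n,k,l}$ — morally by ``reflecting'' the roles of $\ell_0$ and $\ell_1$ along the leftmost path — yielding $\genFunRootShared = \genFunFirstLeafShared$, so that $T^2 = 2\genFunFirstLeafShared + \genFunNeitherShared$.

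It then remains to show $\genFunNeitherShared(x,y,z) = \genFunFirstLeafShared(x,y,z)^2/(xyz)$. For this I would decompose an element $(t,\ell_i)$ of $\treeClassNeitherShared_{n,k,l}$ at the shared edge $e$, which lies strictly between $\ell_0$ and $\ell_1$ on the leftmost path. Cutting the tree appropriately at $e$ should split $(t,\ell_i)$ into two pieces, each of which — after rerooting and relabeling — is (essentially) an element of a first-leaf-shared class, with the shared edge being counted once in each piece (hence the correction factor $xyz$ removing one spurious edge with its leaf-marker $y$ and its $l$-path-marker $z$, and one spurious size-$x$). Concretely, the leftmost path is split into a top segment (from $\ell_0$ down to $e$) carrying the ``$y$'' marking and a bottom segment (from $e$ down to $\ell_1$), and the $l$-path is split at $e$ into two halves, one attached to each piece; each piece becomes a tree with its leftmost path and its $l$-path sharing exactly the single endpoint of $e$ that it retains. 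Summing over where $e$ falls gives a convolution, i.e. a product of generating functions, divided by the generating function $xyz$ of the doubly-counted shared edge. I expect the main obstacle to be making this last decomposition precise: one must check that cutting at $e$ really does produce two bona fide trees in $\treeClassFirstLeafShared$-type classes (getting the root degrees, the ``no non-root vertex with one child'' condition, and the exact-one-shared-vertex condition right at the cut), that the map is a genuine bijection, and that the bookkeeping of the markers $x,y,z$ accounts for exactly one factor of $xyz$ being overcounted. Once the bijections are nailed down, assembling $T^2 = 2\genFunFirstLeafShared + \genFunFirstLeafShared^2/(xyz)$ is immediate.
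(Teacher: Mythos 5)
Your proposal is correct and follows essentially the same route as the paper: partition $\mc T^2_{n,k,l}$ into $\treeClassFirstLeafShared$, $\treeClassRootShared$, $\treeClassNeitherShared$, identify $\genFunRootShared$ with $\genFunFirstLeafShared$ via the re-rooting/reflection bijection, and decompose $\treeClassNeitherShared$ by cutting at the shared edge to obtain the convolution $\genFunFirstLeafShared^2/(xyz)$. The only detail to tighten is that after cutting, each piece should regain a full copy of the shared edge (the paper adds a new root vertex to one piece and a new first leaf to the other), so that each piece genuinely lies in a two-vertex-shared class rather than sharing only one endpoint as you momentarily phrase it.
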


\begin{proof}
Fix a pair $ ( t, \ell ) $  in $\treeClassRootShared_{n, k, l}$ and construct a new pair $ ( t', \ell' ) $ as follows.
The tree $ t' $ is obtained from $ t $ by re-rooting at the leftmost leaf and flipping the resulting tree from left to right. 
The distinguished leaf $ \ell' $ is the first leaf of $ t' $. 
See Figure \ref{figTRtoTLmap}.

\begin{figure}[t]
	\centering
		 \def\svgwidth{0.72\textwidth}
		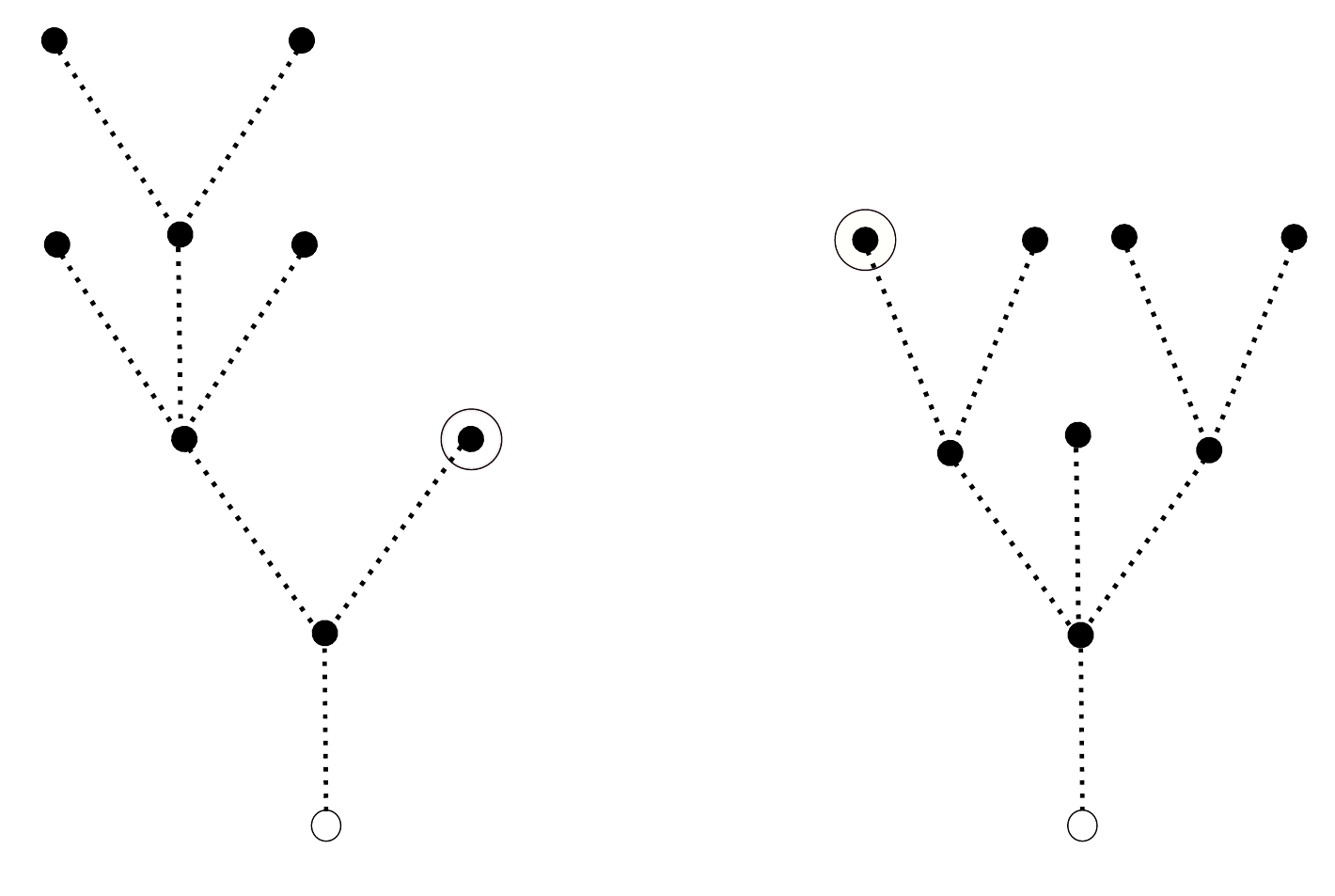
	\caption{The map from $ ( t, \ell ) $ to $( t', \ell' ) $. The image of $\ell_i(t) $ is $\ell_i'$.}
	\label{figTRtoTLmap}
\end{figure}

Observe that the leftmost path and the $l$-path of $ ( t', \ell' ) $ correspond to the leftmost path and the $l$-path of $ ( t, \ell ) $, respectively. 
As a result, we have that $ ( t', \ell' ) \in \treeClassFirstLeafShared_{n, k, l}$. 
In fact, this set is the range of the bijection $ ( t, \ell ) \mapsto ( t', \ell' ) $, so we have the equality
\begin{equation}
	\genFunRootShared_{n, k, l} 
		= 
			\genFunFirstLeafShared_{n, k, l},
		\qquad
			n, k, l \ge 2.
	\label{T^RequalsT^L}
\end{equation}

Let $n, k, l \ge 3$ and fix a pair $ ( t, \ell ) $ in $\treeClassNeitherShared_{n, k, l}$.
Let $u$ and $v$ be the vertices shared by the leftmost path and the $l$-path, with $u$ being closest to the root. 
Next, we delete the edge $ (u, v ) $ so that two new trees form. 
To the tree containing $v$, which we will denote by $t_1$, we add a vertex preceding $v$ and root the tree there. 
To the other tree, which we will denote by $t_2$, we add a vertex descending from $u$ so that it becomes the first leaf in this tree.
We distinguish the last leaf in $ t_1 $ and the first leaf in $ t_2 $ to obtain pairs $ ( t_1, \ell_m ) $ and $ ( t_2, \ell_1 ) $, where $ m $ is the number of leaves in $ t_1 $.

The leftmost paths of $t_1$ and $t_2$ are formed from the leftmost path of $t$ via the removal of an edge and the addition of two edges.
Similarly, the $l$-paths of $ ( t_1, \ell_m ) $ and $ ( t_2, \ell_1 ) $ are formed from the $l$-path of $ ( t, \ell ) $ via the removal of an edge and the addition of two edges. 
Consequently, $ ( t_1, \ell_m ) $ lies in some $ \treeClassRootShared_{m, j, h} $, $ ( t_2, \ell_1 ) $ lies in some $ \treeClassFirstLeafShared_{m', j', h'}$, and these parameters must satisfy 
$ m + m' = n+1, j + j' = k+1 $, and $h + h' = l+1$. 
In fact, the map $ ( t, \ell ) \mapsto ( ( t_1, \ell_m ), ( t_2, \ell_1 ) )$ is a bijection whose range consists of all such pairs. 
This yields the identity
$$
	\genFunNeitherShared_{n, k, l} 	= 	\sum_{m=2}^{n-1} \sum_{j=2}^{k-1}  \sum_{h=2}^{l-1} \genFunRootShared_{m , j, h} \genFunFirstLeafShared_{n+1-m , k+1-j, l+1-h}
$$

\noindent 
for $n, k, l \ge 3$. However, it is easy to verify that equality holds when any of $n, k$, or $l$ is equal to 2 (both sides are zero). 
Combining this with (\ref{T^RequalsT^L}), the above becomes
\begin{align}
\begin{split}
	\genFunNeitherShared_{n, k, l} 	
			& = 	
					\sum_{m=2}^{n-1} 
						\sum_{j=2}^{k-1}  
							\sum_{h=2}^{l-1} 
								\genFunFirstLeafShared_{m , j, h} 
								\genFunFirstLeafShared_{n+1-m , k+1-j, l+1-h} \\
			& = 
					[x^n y^k z^l] \, 
						(xyz)^{-1} 
						\genFunFirstLeafShared(x, y, z)^2.
\end{split}
\label{identityNeitherSharedAndFirstLeaf}
\end{align}

\noindent 
Adding $ \genFunFirstLeafShared_{n, k, l} + \genFunRootShared_{n, k, l} $ and using (\ref{T^RequalsT^L}) once again, we obtain the final relation
\begin{align*}
	T^2_{n, k, l} 	& = [x^n y^k z^l]\,  2 \genFunFirstLeafShared(x, y, z) + [x^n y^k z^l] \, (xyz)^{-1} \genFunFirstLeafShared(x, y, z)^2,
\end{align*}

\noindent 
which holds for all parameter values. 
This establishes the result.
\end{proof}

\bigbreak
\begin{prop}
\label{oneVertexSharedIdentity}
The following relation holds:
$$
	T^1(x, y, z)	=	\frac{\genFunFirstLeafShared(x, y, z)^2}{x y^2 z^2}.
$$
\end{prop}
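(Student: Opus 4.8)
The plan is to mimic the treatment of $\treeClassNeitherShared_{n,k,l}$ in the proof of Proposition~\ref{twoSharedRecursion}: I will set up a bijection that cuts a tree in $\mc T^1_{n,k,l}$ apart at its unique shared vertex into a pair consisting of a tree in $\treeClassRootShared$ and a tree in $\treeClassFirstLeafShared$, and then read off the resulting convolution identity at the level of generating functions. First I would pin down the local structure near the unique shared vertex $w$ of the leftmost path and the $l$-path of an element $(t,\ell_i)\in\mc T^1_{n,k,l}$. Since the root $\ell_0$ has degree one and $\ell_1$ is a leaf, neither can equal $w$: if $w$ were $\ell_0$ or $\ell_1$, the $l$-path would be forced through the unique neighbour of $w$ lying on the leftmost path, producing a second shared vertex. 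Hence $w$ is an interior vertex of the leftmost path, $2\le i\le n-1$, and the $l$-path has the form $\ell_i\to w\to\ell_{i+1}$ with each of its two halves containing at least one edge; moreover $\ell_i$ and $\ell_{i+1}$ are descendants of $w$ sitting in consecutive child-subtrees of $w$, and the child-subtree of $w$ containing $\ell_i$ lies strictly to the right of the leftmost child-subtree (otherwise the two paths would meet again below $w$).

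Next I would define the decomposition. Cutting $t$ at $w$ between the child-subtree containing $\ell_i$ and the one containing $\ell_{i+1}$ produces two pieces: the subtree rooted at $w$ consisting of the children of $w$ up to and including the one towards $\ell_i$ (this piece contains $\ell_1,\dots,\ell_i$ and its leftmost path is the $w$-to-$\ell_1$ stretch of the spine), and the remaining tree, which still contains $\ell_0$ and whose copy of $w$ retains the children towards $\ell_{i+1},\dots$. Note this is a legitimate planar split at $w$, since in clockwise order the parent edge and the children towards $\ell_1,\dots,\ell_i$ occur along one arc and the children towards $\ell_{i+1},\dots$ along the complementary arc. To the first piece I prepend a new root above $w$ and distinguish its last leaf (the image of $\ell_i$), obtaining $t_1$; to the second piece I insert a new leaf as the new leftmost child of $w$ and distinguish that leaf, obtaining $t_2$. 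Arguing exactly as in Proposition~\ref{twoSharedRecursion}, in each piece the added vertex supplies precisely the second vertex shared between the leftmost path and the $l$-path, and no non-root vertex acquires exactly one child, so $(t_1,\cdot)\in\treeClassRootShared_{m,j_1,h_1}$ and $(t_2,\cdot)\in\treeClassFirstLeafShared_{m',j_2,h_2}$, where $m+m'=n+1$, $j_1+j_2=k+2$ (each half of the spine gains one edge from its new endpoint), and $h_1+h_2=l+2$ (likewise for the two halves of the $l$-path). The map $(t,\ell_i)\mapsto((t_1,\cdot),(t_2,\cdot))$ is a bijection onto the collection of all such pairs; its inverse deletes the added root of $t_1$ and the added leaf of $t_2$ and glues the two copies of $w$, inserting the children inherited from $t_1$ before those inherited from $t_2$. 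As in the earlier propositions, the small-parameter cases are easily seen to make both sides vanish.

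Summing the resulting identity $T^1_{n,k,l}=\sum_{m,j_1,h_1}\genFunRootShared_{m,j_1,h_1}\,\genFunFirstLeafShared_{\,n+1-m,\,k+2-j_1,\,l+2-h_1}$ over $n,k,l$ and passing to generating functions yields
$$
	T^1(x,y,z)=(xy^2z^2)^{-1}\,\genFunRootShared(x,y,z)\,\genFunFirstLeafShared(x,y,z),
$$
and invoking (\ref{T^RequalsT^L}) replaces $\genFunRootShared$ by $\genFunFirstLeafShared$, giving the claimed formula; the explicit rational expression then follows from Proposition~\ref{firstLeafSharedIdentity} if desired. I expect the main obstacle to be the same bookkeeping difficulty that appears in Proposition~\ref{twoSharedRecursion}: making the planar cut at $w$ fully precise and verifying that each of $t_1,t_2$ has its leftmost path and $l$-path sharing \emph{exactly} two vertices (so that it lands in $\treeClassRootShared$, resp.\ $\treeClassFirstLeafShared$, and not in some larger class), together with confirming that the inverse gluing reproduces the condition defining $\mc T^1$ and tracking the parameters $(m,j_1,h_1)$ through both directions.
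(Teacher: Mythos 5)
Your argument is correct, but it is routed a little differently from the paper's. The paper disposes of $\mc T^1$ with a single vertex-splitting bijection: the unique shared vertex $u$ is split into two adjacent spine vertices carrying the left and right components of its spinal subtree, which maps $\mc T^1_{n,k,l}$ bijectively onto $\treeClassNeitherShared_{n,k+1,l+1}$, and the identity then follows immediately from the already-established formula (\ref{identityNeitherSharedAndFirstLeaf}) for $\genFunNeitherShared$ with no further combinatorics. You instead cut the tree at $w$ directly into a $\treeClassRootShared$-piece and a $\treeClassFirstLeafShared$-piece and re-derive the convolution with the shifted parameters $m+m'=n+1$, $j_1+j_2=k+2$, $h_1+h_2=l+2$; this map is precisely the composite of the paper's splitting with the cut already used for $\treeClassNeitherShared$ in Proposition \ref{twoSharedRecursion}, so the two proofs encode the same underlying bijection and your exponent bookkeeping checks out. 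The price of your version is a second round of the verifications you flag at the end (each piece shares \emph{exactly} two vertices with its leftmost path, no non-root vertex is left with exactly one child, the gluing inverts the cut), all of which the paper sidesteps by recycling (\ref{identityNeitherSharedAndFirstLeaf}); what it buys is that the prefactor $(xy^2z^2)^{-1}$ becomes combinatorially transparent rather than an artifact of reindexing. Your preliminary analysis of $w$ --- that it must be the apex of the $l$-path, interior to the spine, with $\ell_i$ and $\ell_{i+1}$ in consecutive child-subtrees strictly to the right of the spine child --- is accurate and is exactly the reason the paper's split lands in $\treeClassNeitherShared$ rather than one of the other $\mc T^2$ classes.
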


\begin{proof}
Fix a pair $ ( t, \ell_i ) $ in $\mc T^1_{n,k,l}$ and let $u$ denote the vertex shared between the leftmost and $l$-paths. 
We view the spinal subtree in $ t $ attached to $u$ as being made up of two components: 
	the path from $u$ to $\ell_i$ and the structure to the left of it will be the left component, 
	and the path from $u$ to $\ell_{i+1}$ and the structure to the right of it will be the right component. 
Construct the tree $t'$ from $ t $ by splitting $u$ into two vertices, $u_1, u_2$, so that both lie on the leftmost path, are adjacent, and the spinal subtrees in $ t' $ attached to $u_1, u_2$ are the left and right components respectively. 
Finally, distinguish the $i^{th}$ leaf in $ t' $ to obtain the pair $ ( t', \ell_i ) $.

Since the pair $ ( t', \ell_i ) $
has an $(l+1)$-path that shares two vertices with the leftmost path ($u_1$ and $u_2$), neither of which can be the root or the first leaf, $ ( t', \ell_i ) $ lies in $\treeClassNeitherShared_{n, k+1, l+1}$. 
In fact, this set is the range of the bijective map $ ( t, \ell_i ) \mapsto ( t', \ell_i ) $, so the relation
$$
	T^1_{n,k,l} 	= 	\genFunNeitherShared_{n, k+1, l+1}
$$
	
\noindent 
holds for all values of $n, k$, and $l$. 
Using (\ref{identityNeitherSharedAndFirstLeaf}), we can rewrite this as 
\begin{align*}
	T^1_{n,k,l}		&=		[x^n y^{k+1} z^{l+1}]\,  (xyz)^{-1} \, \genFunFirstLeafShared(x, y, z)^2   \\
					&=		[x^n y^k z^l]\,  x^{-1}(yz)^{-2} \, \genFunFirstLeafShared(x, y, z)^2,
\end{align*}

\noindent from which we have the result.
\end{proof}


\begin{prop}
\label{noneSharedIdentity}

The following relation holds:
$$
	T^0(x, y, z)	
			=	
				T^1(x, y, z)
				\deriv{x}T^*(x).
$$
\end{prop}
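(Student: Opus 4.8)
The plan is to establish a bijection between $\mc T^0_{n,k,l}$ and the pairs $\big((t',\ell),(s,\lambda)\big)$ in which $(t',\ell)\in\mc T^1_{n',k,l}$ and $(s,\lambda)$ is a $\mc T^*$-tree $s$ together with a marked leaf $\lambda$, where the numbers of leaves are related by $n=n'+(\text{number of leaves of }s)-1$. Since a $\mc T^*$-tree with $N$ leaves carries $N$ markings and contributes a factor $x^{N-1}$ to the variable counting $n$ (one of its leaves is absorbed in the gluing), such pairs contribute $N\,T^*_N\,x^{N-1}y^kz^l$; summing over $N$ turns the bijection into the identity $T^0(x,y,z)=T^1(x,y,z)\,\partial_x T^*(x)$ (with the degenerate cases, where both sides vanish, immediate).

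The key structural observation, obtained from the spinal decomposition of Proposition \ref{subbranchRecursion}, is that a pair $(t,\ell_i)\in\mc T_{n,k,l}$ lies in $\mc T^0$ precisely when the top vertex $v$ of its $l$-path (the common ancestor of $\ell_i$ and $\ell_{i+1}$) is a \emph{non-root} vertex of one of the spinal subtrees $t_m$ of $t$; indeed, deleting the leftmost path from $t$ breaks it into the subtrees hanging below the off--leftmost-path children of the spinal vertices, and the $l$-path avoids the leftmost path exactly when it is contained in one of these, which in turn happens exactly when $v$ lies strictly inside some $t_m$. In that case the whole $l$-path lies in the subtree $S_v$ of $t$ rooted at $v$, and $\ell_i,\ell_{i+1}$ are consecutive leaves of $S_v$ with common ancestor its root. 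Given such $(t,\ell_i)$, let $p_m$ be the root of the spinal subtree $t_m$ containing $v$. Let $\hat t_m$ be the $\mc T^*$-tree obtained from $t_m$ by collapsing $S_v$ to a single leaf $\lambda$ (delete $S_v$ and plant a leaf in $v$'s former place), and let $t'$ be obtained from $t$ by replacing the spinal subtree $t_m$ with ``$S_v$ with its root identified with $p_m$'': discard from $t$ all of $t_m$ except $p_m$ and the subtrees hanging below the children of $v$, and reattach those subtrees, in their original order, as the children of $p_m$ other than the one on the leftmost path. One checks that $t'$ has the same leftmost path as $t$, that the top vertex of the $l$-path through $\ell_i$ in $t'$ is now $p_m$ with $\ell_i$ descending from an off--leftmost-path child of $p_m$ (so $(t',\ell_i)\in\mc T^1_{n',k,l}$ with the \emph{same} $l$, since $v$ and $p_m$ occupy the same position on the $l$-path), and that $n'=n-(\text{number of leaves of }\hat t_m)+1$. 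The map is $(t,\ell_i)\mapsto\big((t',\ell_i),(\hat t_m,\lambda)\big)$.

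The inverse is the natural reverse surgery: from $(t',\ell)\in\mc T^1_{n',k,l}$, whose $l$-path has top vertex $p_m$ (the root of a spinal subtree) and whose distinguished leaf straddles the gap between two consecutive off--leftmost-path children $e_g,e_{g+1}$ of $p_m$, together with a $\mc T^*$-tree $\hat t_m$ carrying a marked leaf $\lambda$, one reconstructs $t_m$ by replacing $\lambda$ with a new vertex $v$ whose children (with their subtrees from $t'$) are the off--leftmost-path children of $p_m$, and then reinserts $t_m$ at $p_m$. The result lies in $\mc T^0$ because $v$ becomes a non-root vertex of a spinal subtree, and this undoes the construction. The step I expect to require the most care is confirming that $t'$ genuinely lands in $\mc T^1$ rather than $\mc T^2$ — i.e. that $\ell_i$ descends from an off--leftmost-path child of $p_m$, which is inherited from the fact that $v$, lying strictly inside $t_m$, is reached from $p_m$ through such a child — together with handling the degenerate configurations ($v$ a child of $p_m$, or $\hat t_m$ the one-leaf tree) and checking that the leaf counts and the $l$-path length behave as claimed.
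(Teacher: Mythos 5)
Your proposal is correct and uses essentially the same bijection as the paper: identify the top vertex of the $l$-path (your $v$, the paper's $u$) inside the spinal subtree $t_m$, split off the subtree below it (replacing it by a marked leaf to form a $\mathcal T^*$-tree with a distinguished leaf) and graft its children onto the spine vertex $p_m$ to land in $\mathcal T^1$ with the same $k$ and $l$. The paper phrases the grafting as ``replace the spinal subtree $\bar t$ by the subtree $S_u$'' while you phrase it as ``identify $v$ with $p_m$''; these are the same tree, and the leaf count $n = n' + N - 1$ and the resulting identity $T^0_{n,k,l} = \sum_N T^1_{n-N+1,k,l}\, N\, T^*_N$ match the paper's exactly.
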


\begin{proof}
Given a pair $ ( t, \ell ) $ in $ \mc T^0_{n, k, l}$, let $ \bar{t} $ be the spinal subtree in $ t $ containing the $ l $-path.
Denote by $ u $ the vertex on the $ l $-path that is closest to the `root' of $ \bar{t} $.
Form the tree $ t_1 $ by taking a copy of $ t $ and replacing its copy of $ \bar{t} $ with the subtree formed by $ u $ and its descendants.
Form the tree $ t_2 $ by taking a copy of $ \bar{t} $ and turning its copy of $ u $, which we denote by $ u^* $, into a leaf by deleting its descendents.
Distinguish the leaf in $ t_1 $ that is a copy of $ \ell $ to obtain the pair $ ( t_1, \ell' ) $ and distinguish the newly created leaf $ u^* $ in $ t_2 $ to obtain the pair $ ( t_2, u^* ) $.
See Figure \ref{figDisjointPathMap}. 

\begin{figure}[t]
    \centering

	\def\svgwidth{0.8\textwidth} 
	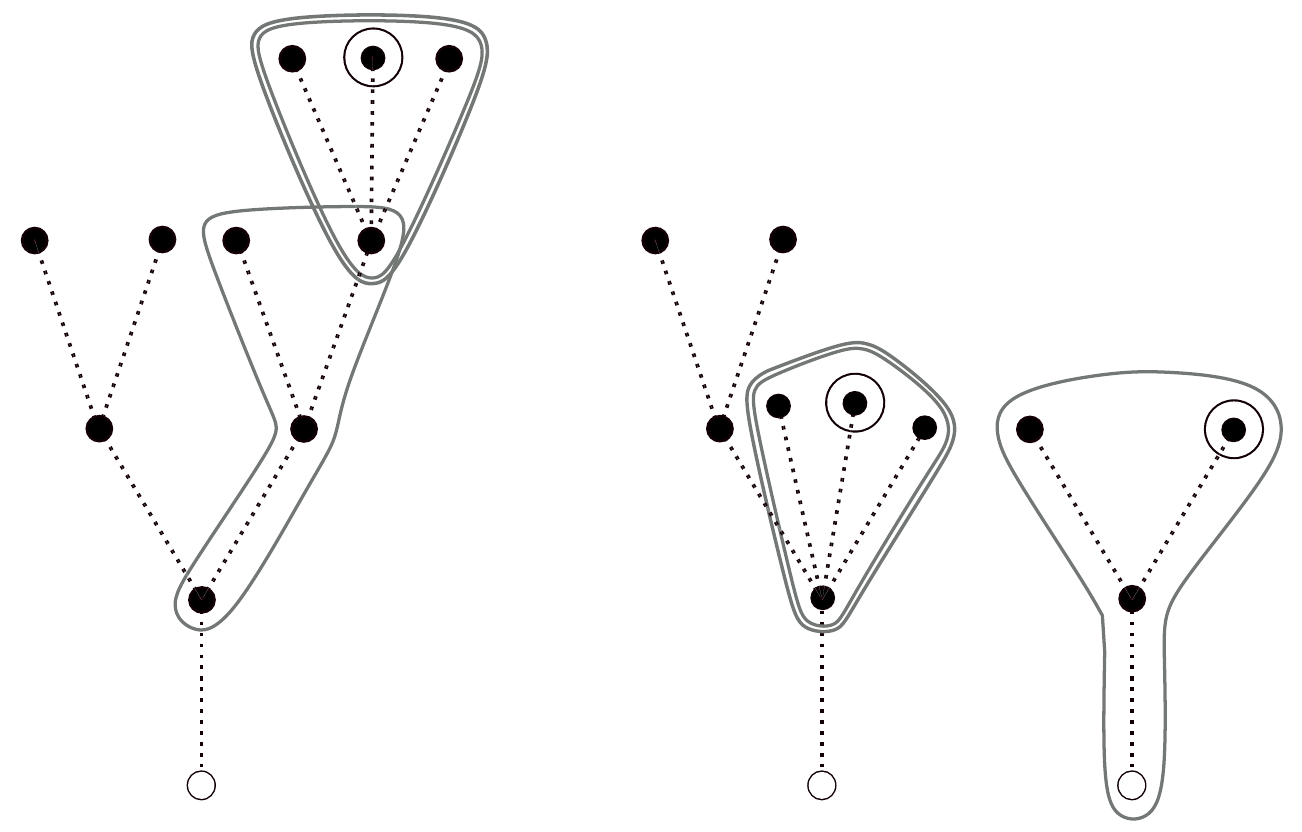

    \captionsetup{ width = 0.85\textwidth }
	\caption{
				The decomposition of $ ( t, \ell ) $ into $ ( ( t_1, \ell' ), ( t_2, u^* ) ) $. 
				Root vertices are white, distinguished leaves are circled, and circled subtrees on the left map to the corresponding circled subtree on the right.
			}
	\label{figDisjointPathMap}
\end{figure}

The pair $ ( t_1, \ell' ) $ must lie in some $\mc T^1_{m_1, k, l}$ since its leftmost path and $l$-path are copies of the ones in $ ( t, \ell ) $ except that the $l$-path of $ t_1 $ now shares one vertex with the leftmost path (some copy of $ u $).
The tree $ t_2 $ must lie in some $ \mc T^*_{m_2} $ since it is derived from a spinal subtree.
This time, the leaf parameters satisfy $ m_1 + m_2 = n + 1 $ since all of the leaves in $ t_1 $ and $ t_2 $ are derived from leaves in $ t $ with the exception of the newly created leaf $ u $.
The bijection $ ( t, \ell ) \mapsto ( ( t_1, \ell' ), ( t_2, u ) ) $ yields the relation
$$
	T^0_{n, k, l} 	
		= 	
			\sum_{ m = 1 }^{ n - 1 }
				T^1_{n + 1 - m, k, l} 
				\,
				m \, T^*_m 
		,
		\qquad  
			n, k, l \ge 2.
$$

\noindent
The entire parameter range is included here because both sides are zero when the bijection is not well-defined.
Introducing a factor of $ y^k z^l $ and summing over $ k $ and $ l $, we obtain
\begin{align*}
	\sum_{ k, l \ge 2 }
		T^0_{n, k, l}
		y^k z^l
			& =		
					\sum_{ m = 1 }^{ n - 1 }
						\left(\sum_{k, l \ge 2} T^1_{n + 1 - m, k, l} y^k z^l \right) 
						\,
						m \, T^*_m 
			\\
			& =		
					\sum_{ m = 1 }^{ n + 1 }
						 [x^{n + 1 - m}] 
						 	T^1(x, y, z)
						\,
						[x^m]
							x 
							\deriv{x} T^*(x) 
			\\
		 	& = 	
					[x^{n + 1}] 
						T^1(x, y, z)
						x \deriv{x} T^*(x) 
			\\
		 	& = 	
					[x^n] 
						T^1(x, y, z)
						\deriv{x} T^*(x) 
\end{align*}

\noindent 
for all $ n $, which corresponds to the given generating function relation.
\end{proof}

\section{Asymptotic Expansions} 
In this section, we obtain estimates on our generating functions' coefficients that lead to the estimates on the moments of the $ \zeta_k $ in Lemma \ref{momentEstimates}.
Our approach will use the multivariate analytic combinatorics tools developed in \cite{gao00}.

To begin, we recall some definitions from \cite{gao00}. 

\begin{defn}
For every $\eps > 0 $ and $\phi \in ( 0, \frac{\pi}{2} )$, we associate the \textit{$\delta$-domain} 
$$
	\deltaDomain
		=
			\{
				x \in \mbb{C} 
			: 
				|x| \le 1 + \eps,
				\, 
				x \neq 1, 
				\,
				|\text{Arg}(x-1)| \ge \phi
			\}.
$$
For every $\eps > 0 $, $\phi \in ( 0, \frac{\pi}{2} )$, and positive integer $m$, we associate the \textit{$\mc R$-domain} 
$$
	\rmDomain 
		=
			\{ 
				(x, y_1, ..., y_m) \in \mbb{C}^{m+1} 
			: 
				x \in \deltaDomain, 
				|y_j| < 1 \text{ for all } j
			\}.
$$

\end{defn}

\noindent
Whenever the parameter $ m $ can be deduced from context, we will omit it in $ \rmDomain $ and use $ \mb y $ as a shorthand for the list of variables $ y_1, ... , y_m $.

\begin{defn}
We write 
$$
	f( x, y_1, ... , y_m ) = \widetilde{O}\Big((1-x)^{-\alpha}\prod_{j=1}^m (1-y_j)^{-\beta_j}\Big)
$$

\noindent 
if there exist 
	some $ \rDomain $,
	a real number $\alpha'$,
	$\beta' \ge 0$, and
	$\gamma \ge 0$ 
such that
\begin{enumerate}[label = (\roman*)]
	\item 
	$ f $ is analytic on $ \rDomain $,
	
	\item 
	$
		f(x, \mb y ) 
			= 
				O \Big( (1-x)^{-\alpha'}\prod_{j=1}^m (1-|y_j|)^{-\beta'} \Big)
	$
	on $ \rDomain $, and

	\item 
	$
		f(x, \mb y ) = O \Big((1-x)^{-\alpha}\prod_{j=1}^m (1-|y_j|)^{-\beta_j}\Big)
	$
	as $(1-x)(1-y_i)^{-\gamma} \to 0$ for all $i$ and $ ( x, \mb y ) \in \rDomain $.

\end{enumerate}

\end{defn}

\begin{defn}
We write 
$$ 
	f ( x, y_1, ... , y_m ) 
		\approx 
			c (1-x)^{-\alpha}\prod_{j=1}^m (1-y_j)^{-\beta_j}
$$
if $ c \neq 0 $ and there exist 
	$ \alpha' < \alpha $,
non-negative numbers $ \beta_1', ..., \beta_m' $,
and functions 
	$ C( \mb y ) $, 
	$ C_0( x, \mb y ) $, ..., $ C_m( x, \mb y ) $,
	and
	$ E( x, \mb y ) $
such that
\begin{enumerate}[label=(\roman*)]

	\item
	$
		f ( x, \mb y ) 
			= 
					C( \mb y ) (1-x)^{-\alpha}\prod_{j=1}^m (1-y_j)^{-\beta_j}
				+	\sum_{j=0}^m
						C_j( x, \mb y )
				+
					E( x, \mb y )
			,
	$

	\item
	$ C $ is analytic and
	$ 
		C( \mb y ) 
			= 
				c 
			+ 	O\left( \sum_{j = 1}^m | 1 - y_j | \right)
	$
	on some product of $\delta$-domains,
	
	\item
	$ C_0, C_1, \ldots, C_m $ are polynomials in $ x, y_1, \ldots, y_m $ respectively, and
	
	\item
	$
		E( x, \mb y ) 
			= 
				\widetilde{O}\Big( (1-x)^{-\alpha'} \prod_{j=1}^m (1-y_j)^{-\beta_j'} \Big)
			.
	$

\end{enumerate}

\end{defn}

Next, we state the tools in \cite{gao00} that we will need.  Our statements differ slightly from those in \cite{gao00}, but only in that for our purposes we found it helpful make explicit the constants implied by their big-oh notation.

\begin{prop}[Lemma 2, \cite{gao00}]
	\label{prop GW Lemma 2}
	Suppose that 
	$$
		f( x, y_1, ... , y_m ) 
			= 
				\widetilde{O}
					\Big(
						(1-x)^{ -\alpha }
						\prod_{j=1}^m 
							(1-y_j)^{ -\beta_j }
					\Big)
    		.
	$$

	\noindent
	Then the following statements hold:
	\begin{enumerate}[ label = (\roman*) ]
		
		\item
		for every $ L > 0 $ there exists some $ M > 0 $ such that
    	$$
    		\Big|
    			[ x^n 
				y_1^{k_1} 
				\ldots 
				y_m^{ k_m } 
				]
            	f( x, \mb y )
    		\Big|	
    			\le
    				M
					n^{ \alpha - 1 }
					\prod_{ j = 1 }^m
						k_j^{ \beta_j }
    	$$
    
    	\noindent
    	whenever $ 1 \le k_j \le L \log n $ for all $ j $ and $ n $ is large, and
		
		\item
		for every $ \eps \in ( 0, 1 ) $ there exists some $ M > 0 $ such that
    	$$
    		\Big|
    			[ x^n 
				y_1^{k_1} 
				\ldots 
				y_m^{ k_m } 
				]
            	f( x, \mb y )
    		\Big|	
    			\le
    				M
					n^{ \alpha - 1 }
					\prod_{ j = 1 }^m
						\eps^{ - k_j }
    	$$
    
    	\noindent
    	for all $ k_1, \ldots, k_m, $ and $ n $.
		
	\end{enumerate}
\end{prop}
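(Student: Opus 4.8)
The plan is to recover the coefficients through Cauchy's integral formula, integrating each $ y_j $ over a circle whose radius is adapted to the exponent $ \beta_j $ and integrating $ x $ over a Flajolet--Odlyzko contour lying in the $ \delta $-domain. Fix the data $ \rDomain $, $ \alpha' $, $ \beta' \ge 0 $, $ \gamma \ge 0 $ furnished by the hypothesis $ f = \widetilde{O}( \cdots ) $, so that $ f $ is analytic on $ \rDomain $, obeys the crude bound $ |f| = O\big( |1-x|^{-\alpha'} \prod_j (1 - |y_j|)^{-\beta'} \big) $ on all of $ \rDomain $, and obeys the sharp bound $ |f| = O\big( |1-x|^{-\alpha} \prod_j (1 - |y_j|)^{-\beta_j} \big) $ on the part of $ \rDomain $ where $ (1-x)(1-y_i)^{-\gamma} $ is small for every $ i $. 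For fixed $ \mb y $ with $ |y_j| < 1 $ the function $ f( \cdot, \mb y ) / x^{n+1} $ is analytic on $ \deltaDomain $, so Cauchy's theorem lets us deform a small circle about the origin onto any contour of the stated kind.

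For part (i), fix $ k_1, \dots, k_m $ with $ 1 \le k_j \le L \log n $, set $ r_j = 1 - 1/(k_j + 1) $ (so $ 1 - r_j \asymp 1/k_j $ and $ \prod_j r_j^{-k_j} = \prod_j (1 + 1/k_j)^{k_j} \le e^m = O(1) $), and integrate each $ y_j $ over $ |y_j| = r_j $; the $ y $-integrals only produce the bounded factor $ \prod_j r_j^{-k_j} $, leaving us to bound $ \int_\Gamma |x|^{-n-1} \sup_{|y_j| = r_j} |f(x, \mb y)| \, |dx| $ over a contour $ \Gamma = \Gamma_1 \cup \Gamma_2 \cup \Gamma_3 $ in $ \deltaDomain $, where $ \Gamma_1 $ is the arc $ |x - 1| = 1/n $, $ \Gamma_2 $ is the pair of segments $ x = 1 + t e^{\pm i \phi} $ for $ t $ running from $ 1/n $ out to where $ |x| = 1 + \eta $, and $ \Gamma_3 $ is the arc $ |x| = 1 + \eta $, with $ \eta \in (0, \eps) $ a fixed small number. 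The crucial step is to cut $ \Gamma_2 $ at $ t_0 = (\log n)^{-\gamma - 1} $. On $ \Gamma_1 $ and on the inner segments $ t \in [1/n, t_0] $ one has $ |1 - x| \le t_0 $, and $ |1 - y_i|^{-\gamma} \le (k_i+1)^\gamma = O(k_i^\gamma) $, so $ |(1-x)(1-y_i)^{-\gamma}| = O\big( t_0 (\log n)^\gamma \big) = O(1/\log n) \to 0 $; hence the sharp bound applies and $ \sup_{|y_j| = r_j} |f| = O\big( |1-x|^{-\alpha} \prod_j k_j^{\beta_j} \big) $. Using $ |x| \ge 1 + t \cos \phi $ on $ \Gamma_2 $ and the substitution $ u = c n t $ in $ \int_{1/n}^{t_0} t^{-\alpha} e^{-c n t} \, dt = O( n^{\alpha - 1} ) $, together with the elementary estimate $ O(n^\alpha) \cdot O(n^{-1}) $ on $ \Gamma_1 $, this part of the contour contributes exactly the claimed $ O\big( n^{\alpha - 1} \prod_j k_j^{\beta_j} \big) $. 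On $ \Gamma_3 $ and on the outer segments $ t \in [t_0, \eta] $ the sharp bound may fail, so there we use only the crude bound, picking up the factor $ |1-x|^{-\alpha'} \prod_j k_j^{\beta'} = (\log n)^{O(1)} $, while $ |x|^{-n-1} \le (1 + c t_0)^{-n-1} \le \exp\!\big( -c' n (\log n)^{-\gamma - 1} \big) $ decays faster than every power of $ n $; these portions are therefore negligible, and part (i) follows since $ \prod_j r_j^{-k_j} = O(1) $.

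For part (ii) one runs the same computation with every $ y_j $ frozen on the circle $ |y_j| = \eps $, where now $ \eps \in (0,1) $ is the given constant. Since $ 1 - |y_j| = 1 - \eps $ is bounded away from zero, $ (1 - y_i)^{-\gamma} $ is bounded and the sharp bound of $ \widetilde{O} $ holds throughout a genuine $ \delta $-domain neighbourhood of $ x = 1 $; consequently $ g_{\mb k}(x) \ldef [\mb y^{\mb k}] f(x, \mb y) $ satisfies $ |g_{\mb k}| = O\big( \eps^{-\sum k_j} |1-x|^{-\alpha} \big) $ near $ x = 1 $ and $ |g_{\mb k}| = O\big( \eps^{-\sum k_j} |1-x|^{-\alpha'} \big) $ on all of $ \deltaDomain $, with constants independent of $ \mb k $ and $ n $. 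Classical univariate singularity analysis (the same contour estimate with $ t_0 $ now a fixed constant) then gives $ |[x^n] g_{\mb k}| = O\big( \eps^{-\sum k_j} n^{\alpha - 1} \big) $ for all $ n $, which is the assertion.

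The main obstacle is the balancing in part (i): the near-singularity regime of $ \widetilde{O} $ --- the only place where the good exponents $ \alpha, \beta_j $ are available --- is controlled by the requirement that $ (1-x)(1-y_i)^{-\gamma} $ be small, so one must choose the cut point $ t_0 $ on the rays large enough that this regime covers all of $ \Gamma $ except a piece on which the exponential factor $ |x|^{-n} $ is already strong enough to absorb both the worse exponents $ \alpha', \beta' $ and the polylogarithmic growth coming from $ (1 - |y_j|)^{-\beta'} $. The hypothesis $ k_j \le L \log n $ is precisely what makes such a $ t_0 $ exist, and the remaining care is bookkeeping: ensuring that every implied constant depends only on $ \alpha, \alpha', \beta', \gamma, \eps, \phi, L, m $ and not on $ \mb k $ or $ n $.
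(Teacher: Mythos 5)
The paper does not actually prove this proposition---it is imported as Lemma~2 of \cite{gao00}, with the authors' remark that their only change is to make the implied constants explicit---so there is no in-paper proof to compare yours against. Your argument is nonetheless correct and is the standard multivariate Flajolet--Odlyzko route that the citation rests on: recover the coefficient by Cauchy's formula, put each $y_j$ on the circle $|y_j| = 1 - 1/(k_j+1)$ so that $\prod_j |y_j|^{-k_j} \le e^m$ while $(1-|y_j|)^{-\beta_j} \asymp k_j^{\beta_j}$, and integrate $x$ over a Hankel-type contour in the $\delta$-domain split at $t_0 = (\log n)^{-\gamma-1}$. On the inner piece $|1-x| \le t_0$ combined with $(1-|y_i|)^{-\gamma} = O((\log n)^\gamma)$ forces $(1-x)(1-y_i)^{-\gamma} \to 0$ (this is precisely where the hypothesis $k_j \le L\log n$ enters), so the sharp $\widetilde{O}$ bound applies and the usual substitution $u = nt$ delivers $O\big(n^{\alpha-1}\prod_j k_j^{\beta_j}\big)$; on the outer piece $|x|^{-n} \le \exp\big(-cn(\log n)^{-\gamma-1}\big)$ is superpolynomially small and swallows the polylogarithmic factors $|1-x|^{-\alpha'} \le (\log n)^{|\alpha'|(\gamma+1)}$ and $\prod_j(1-|y_j|)^{-\beta'} = O((\log n)^{m\beta'})$ from the crude bound. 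Part~(ii) is the simpler case where $|y_j|=\eps$ is frozen, $(1-y_i)^{-\gamma}$ is uniformly bounded, so the sharp bound holds in a fixed neighbourhood of $x=1$, and a fixed $t_0$ suffices; all constants there are visibly independent of $\mb k$ and $n$. One small point worth spelling out when you write it up: the definition of $\widetilde{O}$ phrases its regime in terms of $(1-x)(1-y_i)^{-\gamma}$, not $(1-x)(1-|y_i|)^{-\gamma}$, so you need the inequality $|1-y_i| \ge 1-|y_i|$ to conclude that placing $y_i$ on a circle of radius $1-1/(k_i+1)$ lands you in the sharp regime; this is true but should be stated. Otherwise the bookkeeping is correct.
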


\begin{prop}[Lemma 3, \cite{gao00}]
	\label{prop GW Lemma 3}	
	Suppose that 
    $$ 
    	f ( x, y_1, ... , y_m ) 
    		\approx 
    			c (1-x)^{-\alpha}
				\prod_{j=1}^m 
					( 1 - y_j )^{ -\beta_j }
    $$

	\noindent
	and $ \alpha \neq 0, -1, -2, \ldots $.
	Then for every $ L > 0 $ there exists some $ M > 0 $ such that
	\begin{align*}
		\left|
			[ x^n 
				y_1^{k_1} 
				\ldots 
				y_m^{ k_m } 
				]
        	f( x, \mb y )
		-
			c\,
			\frac{ 
					n^{ \alpha - 1 }
				}{
					\Gamma( \alpha )
				}
			\prod_{ j = 1 }^m
			\frac{ 
					k_j^{ \beta_j - 1 }
				}{
					\Gamma( \beta_j )
				}
		\right|	
			\le
				M
				n^{ \alpha - 1 }
    			\left(
    				\prod_{ j = 1 }^m
        				k_j^{ \beta_j - 1 }
				\right)
    			\sum_{ j = 1 }^m
        			\frac{1}{k_j}
	\end{align*}

	\noindent
	whenever $ k_j \le L \log n $ for all $ j $ and each $ k_j $ is large.

\end{prop}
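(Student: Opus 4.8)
The plan is to peel $f$ apart using the decomposition furnished by the definition of $\approx$ and to handle the three groups of terms one at a time. Write
$$
	f(x,\mb y)
		=
			C(\mb y)(1-x)^{-\alpha}\prod_{j=1}^m(1-y_j)^{-\beta_j}
			+ \sum_{j=0}^m C_j(x,\mb y)
			+ E(x,\mb y).
$$
Since $C_0$ is a polynomial in $x$ and each $C_j$ ($1\le j\le m$) a polynomial in $y_j$, the contribution $[x^n y_1^{k_1}\cdots y_m^{k_m}]\sum_j C_j$ vanishes once $n$ and all the $k_j$ exceed the degrees of these polynomials, so the $C_j$ are irrelevant in the stated range. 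For the error term, $E=\widetilde{O}\big((1-x)^{-\alpha'}\prod_j(1-y_j)^{-\beta_j'}\big)$ with $\alpha'<\alpha$ and $\beta_j'\ge 0$, so Proposition~\ref{prop GW Lemma 2}(i) bounds $|[x^n y^{\mathbf{k}}]E|$ by $M' n^{\alpha'-1}\prod_j k_j^{\beta_j'}$ for $k_j\le L\log n$; on that window $\sum_j 1/k_j\ge 1/(L\log n)$ and $\prod_j k_j^{\beta_j'-\beta_j+1}$ is at most polylogarithmic in $n$, so that bound is $o\big(n^{\alpha-1}\prod_j k_j^{\beta_j-1}\sum_j 1/k_j\big)$ because $\alpha-\alpha'>0$, hence absorbed into the target error. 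This reduces the lemma to the single term $g(x,\mb y)=C(\mb y)(1-x)^{-\alpha}\prod_j(1-y_j)^{-\beta_j}$.

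In $g$ the variable $x$ factors off, so $[x^n y^{\mathbf{k}}]g=\binom{n+\alpha-1}{n}\cdot[y_1^{k_1}\cdots y_m^{k_m}]\,C(\mb y)\prod_j(1-y_j)^{-\beta_j}$, and the classical expansion $\binom{n+\alpha-1}{n}=\frac{n^{\alpha-1}}{\Gamma(\alpha)}(1+O(1/n))$ applies precisely because $\alpha\notin\{0,-1,-2,\dots\}$. Splitting $C(\mb y)=c+(C(\mb y)-c)$, the constant part contributes $c\binom{n+\alpha-1}{n}\prod_j\binom{k_j+\beta_j-1}{k_j}$, and the elementary ratio-of-Gamma asymptotics $\binom{k_j+\beta_j-1}{k_j}=\frac{k_j^{\beta_j-1}}{\Gamma(\beta_j)}(1+O(1/k_j))$ (with the usual convention when $\beta_j$ is a non-positive integer, in which case that factor vanishes) turn this into $c\,\frac{n^{\alpha-1}}{\Gamma(\alpha)}\prod_j\frac{k_j^{\beta_j-1}}{\Gamma(\beta_j)}$ plus an error of order $n^{\alpha-1}\prod_j k_j^{\beta_j-1}\sum_j 1/k_j$, the $O(1/n)$ term being dominated by $\sum_j 1/k_j$ on the window $k_j\le L\log n$. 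It therefore suffices to prove the uniform estimate
$$
	\Big|
		[y_1^{k_1}\cdots y_m^{k_m}]\,(C(\mb y)-c)\prod_{j=1}^m(1-y_j)^{-\beta_j}
	\Big|
		\le
			M''\prod_{j=1}^m k_j^{\beta_j-1}\sum_{j=1}^m\frac1{k_j}
$$
for all large tuples $(k_1,\dots,k_m)$; substituting this back into $[x^n]g=\binom{n+\alpha-1}{n}(\cdots)$ then finishes the argument.

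The displayed estimate is an instance of multivariate singularity analysis and is where the real work sits. I would prove it by writing the coefficient as an $m$-fold Cauchy integral and deforming, for each variable separately, the circle of integration to a Hankel-type contour clinging to the singularity at $y_j=1$ inside the common product of $\delta$-domains on which $C$ is analytic; on the dominant arc $|1-y_j|\asymp 1/k_j$ the integral of $(1-y_j)^{-\beta_j}y_j^{-k_j-1}$ reproduces the Hankel integral for $k_j^{\beta_j-1}/\Gamma(\beta_j)$, while the hypothesis $C(\mb y)=c+O(\sum_i|1-y_i|)$ forces the remainder left after replacing $C(\mb y)$ by $c$ to be, in each variable, one order smaller, which is exactly what produces the extra $\sum_j 1/k_j$. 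Equivalently one can induct on $m$, peeling off $y_m$ via the sharpened Flajolet--Odlyzko transfer theorem (for a coefficient of $H(y_m)(1-y_m)^{-\beta_m}$ with $H$ analytic on a $\delta$-domain and $H(y_m)=H(1)+O(|1-y_m|)$ near the singularity, whose error is $O(k_m^{\beta_m-2})$) and checking that the extracted slice still has constant term of order $\sum_{i<m}|1-y_i|$, so that the inductive hypothesis applies. The main obstacle is uniformity together with the bookkeeping of remainders: the constants in the contour/transfer estimates must be controlled simultaneously over every admissible tuple $(k_1,\dots,k_m)$ — which ties the contour radii $\asymp 1/k_j$ to the fixed domain of analyticity of $C$ — and at each stage $C(\mb y)-c$ must be split so that the leftover retains a genuine $\widetilde{O}$-type bound in the remaining variables rather than merely a supremum bound, the latter being too lossy (by a factor $\prod_j k_j$). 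This estimate is precisely Lemma~3 of \cite{gao00}; the only new element here is reading off the explicit constant $M$, which is immediate from the uniform bounds above.
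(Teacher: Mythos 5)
The paper does not prove this proposition at all: it is one of two tools quoted verbatim (up to making constants explicit) from Gao and Wormald, stated as ``Lemma 3, \cite{gao00}'' and used without proof. Your proposal therefore goes strictly beyond the paper's own treatment, and the comparison is really with \cite{gao00}, not with the present paper. Your outer structure is the expected one and is sound: the polynomial corrections $C_j$ vanish from the coefficient once $n$ and all $k_j$ exceed their degrees; the $\widetilde{O}$ error is killed by Proposition~\ref{prop GW Lemma 2} together with the two observations that $n^{\alpha'-\alpha}$ decays polynomially while the ratio $\prod_j k_j^{\beta_j'-\beta_j+1}$ is only polylogarithmic on the window $k_j\le L\log n$, and that $\sum_j 1/k_j\gtrsim 1/\log n$ there, so the target error has a floor that is not beaten; the constant slice $c$ of $C(\mathbf y)$ produces exactly the claimed main term via the univariate binomial asymptotics $\binom{n+\alpha-1}{n}=n^{\alpha-1}/\Gamma(\alpha)\,(1+O(1/n))$ and $\binom{k_j+\beta_j-1}{k_j}=k_j^{\beta_j-1}/\Gamma(\beta_j)\,(1+O(1/k_j))$, with $\alpha\notin\{0,-1,\dots\}$ used precisely where you invoke it, the $O(1/n)$ correctly absorbed by $\sum_j 1/k_j$, and the $\beta_j$ non-positive-integer degeneracy correctly flagged. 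You also correctly isolate the single remaining obstacle --- showing $[y^{\mathbf k}](C(\mathbf y)-c)\prod_j(1-y_j)^{-\beta_j}=O(\prod_j k_j^{\beta_j-1}\sum_j 1/k_j)$ uniformly --- and correctly note that a pointwise modulus bound alone is off by $\prod_j k_j$, so genuine transfer (Hankel contours with radii $\asymp 1/k_j$, or an inductive peeling of variables with a Flajolet--Odlyzko step at each stage) is needed and the uniformity in $\mathbf k$ must be tracked. That estimate, however, you only sketch and then explicitly defer to \cite{gao00}; since the paper itself defers the entire proposition to the same reference, this is consistent with, rather than a defect relative to, the paper. The one caution for a reader is that the decisive multivariate transfer estimate is not actually established in your write-up --- your contribution is the clean reduction that exposes it, plus the accounting that shows all other pieces sit inside the stated error.
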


Our analysis begins with the function $T^*(x)$. 
For convenience, we work with the rescaled version
\begin{equation}
\label{scaledTstarDef}
		\scaledTstar(x) 	
				= 		
					2rs \, T^*(x/r), 
\end{equation}

\noindent 
where 
$ 
	r = 3 + 2 \sqrt{2} 
$ 
and 
$ 
	s = ( \sqrt{2} - 1 )/2.
$
It will be useful to note the identities
\begin{equation}
	s ( r - 1 ) 	= 	1
					= 	2 s \sqrt{r}
		.
\label{rsIdentities}
\end{equation}
\noindent
The following result summarizes the properties of $\scaledTstar(x)$.

\begin{prop}
\label{scaledTstarProperties}

The following statements hold: 
\begin{enumerate}[label=(\roman*)]
	\item
	$\scaledTstar(x)$ has non-negative coefficients, $\scaledTstar_n$, and there exists a nonempty set of indices, $J$, so that $\scaledTstar_j > 0$ for $j \in J$ and 
	$ \gcd\{i-j: i, j \in J\} = 1$,

	\item
	$\scaledTstar(x)$ can be regarded as a function on $ \mbb C $ that is analytic on every $\delta$-domain,

	\item
	$ \scaledTstar(x) $ has the form 
	$$
		\scaledTstar(x) 
			= 
				1 - qs(1-x)^{1/2} + s(1-x) - s(1-x)^{1/2} \sigma(x),
	$$ 
	
	\noindent
	where $q = (r^2-1)^{1/2}$, $\sigma(x) = (r^2-x)^{1/2} - q$,
	and $ \sigma(x) ( 1 -x  )^{-1} $ is bounded on every $\delta$-domain, 
	
	\item
	$|\scaledTstar(x)| \le 1$ on some $\delta(\eps, \phi)$, and

	\item
	$ \scaledTstar'(x) $ has the form
	$$
		\scaledTstar'(x)
			=  
				\frac{qs}{2}(1-x)^{-1/2} + u(x),
	$$ 
	
	\noindent
	where $ u(x) $ is analytic and bounded on every $\delta$-domain.
	
\end{enumerate} 

\end{prop}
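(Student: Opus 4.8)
The plan is to first pin down a closed form for $\scaledTstar$, after which (i)--(iii) and (v) are short computations and the uniform bound (iv) is the only step requiring real care.

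\textbf{Closed form.} Setting $y=1$ in Proposition~\ref{addRootRecursion} gives $T^*(x)=x\,\frac{1+T^*(x)}{1-T^*(x)}$, so $T^*$ is the power-series root of $U^2-(1-x)U+x=0$, namely $T^*(x)=\tfrac12\big((1-x)-\sqrt{1-6x+x^2}\big)$. Since $1-6x+x^2=(1-rx)(1-x/r)$ and the identities (\ref{rsIdentities}) force $r^2+1=6r$, replacing $x$ by $x/r$ and simplifying (using also $2rs\cdot\tfrac1{2r}=s$) yields
$$
	\scaledTstar(x)=s\big(r-x-\sqrt{r^2-x}\,\sqrt{1-x}\big)
$$
with principal branches (the product equals $\sqrt{(r^2-x)(1-x)}$ near $0$, hence throughout). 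On a $\delta$-domain one has $x\neq1$, the angular condition forces $|\mathrm{Arg}(1-x)|\le\pi-\phi<\pi$, and $|x|<r^2$; so both square roots are analytic there, and this formula is the analytic continuation of $\scaledTstar$ claimed in (ii), agreeing with the series near $0$.

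\textbf{Parts (i), (iii), (v).} For (i): $\scaledTstar_n=2rs\,r^{-n}T^*_n\ge0$, and since $T^*_1>0$ and $T^*_2>0$ (immediate from the description of $\mc T^*_1,\mc T^*_2$), $J=\{1,2\}$ satisfies $\gcd\{i-j:i,j\in J\}=1$. For (iii): write $r-x=(r-1)+(1-x)$ and $\sqrt{r^2-x}=q+\sigma(x)$ with $q=\sqrt{r^2-1}$, $\sigma(x)=\sqrt{r^2-x}-q$; substituting and using $s(r-1)=1$ gives exactly the stated form, and rationalizing gives $\sigma(x)(1-x)^{-1}=(\sqrt{r^2-x}+q)^{-1}$, which is bounded by $1/q$ on any $\delta$-domain since $\mathrm{Re}\sqrt{r^2-x}>0$ there. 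For (v): differentiating the closed form gives $\scaledTstar'(x)=\tfrac{sq}{2}(1-x)^{-1/2}+u(x)$ once one writes $\tfrac{s}{2}\sqrt{r^2-x}=\tfrac{sq}{2}+\tfrac{s}{2}\sigma(x)$ and notes $\sigma(x)(1-x)^{-1/2}=(1-x)^{1/2}/(\sqrt{r^2-x}+q)$; the resulting $u(x)$ is a sum of functions manifestly analytic and bounded on every $\delta$-domain.

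\textbf{Part (iv) — the main obstacle.} For $|x|\le1$ the bound is free: $\scaledTstar$ has non-negative coefficients and $\scaledTstar(1)=s(r-1)=1$, so $|\scaledTstar(x)|\le\sum_n\scaledTstar_n=1$, with strict inequality when $|x|=1$, $x\neq1$, because equality in the triangle inequality would, by the aperiodicity in (i), force $x=x^2=1$. It remains to cover the shell $1<|x|\le1+\eps$. Near $x=1$ I will use the expansion (iii): the angular constraint yields $\mathrm{Re}(1-x)^{1/2}\ge\sin(\tfrac\phi2)\,|1-x|^{1/2}$, so
$$
	|\scaledTstar(x)|^2=1-2qs\,\mathrm{Re}(1-x)^{1/2}+O(|1-x|)\le 1-c_\phi|1-x|^{1/2}+O(|1-x|)<1
$$
once $|1-x|$ is below a threshold depending only on $\phi$ (and $r,s$). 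On the complementary region $\{1\le|x|\le1+\eps,\ |1-x|\ge\delta_1\}$, which avoids $[1,\infty)$ and on which $\scaledTstar$ is therefore continuous, I will argue by compactness: on its face $|x|=1$ we have $|\scaledTstar|\le1-\eta$ for some $\eta>0$ (by the strict inequality just proved), and shrinking $\eps$ and using uniform continuity keeps $|\scaledTstar|<1$ throughout. Combining the three regions gives $|\scaledTstar(x)|\le1$ on $\delta(\eps,\phi)$ for $\eps$ small enough. The only fiddly part is coordinating the thresholds $\eps,\delta_1,\eta$; the substance is the interplay between the sign of the $(1-x)^{1/2}$-term (governed by the angular opening $\phi$) and strict sub-unit modulus on the circle (governed by aperiodicity).
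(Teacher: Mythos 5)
Your proof of (i), (ii), (iii), (v) follows the same route as the paper: setting $y=1$ in Proposition~\ref{addRootRecursion} to extract the quadratic, solving it, and rescaling. Two small differences are to your credit. In (i) you exhibit $J=\{1,2\}$ rather than taking $J=\mbb N$ as the paper does (both work since $\scaledTstar_n>0$ for all $n\ge1$). In (iii) you rationalize to get $\sigma(x)(1-x)^{-1}=(\sqrt{r^2-x}+q)^{-1}$, which bounds the quotient on any $\delta$-domain in one line and is cleaner than the paper's difference-quotient-near-$1$/boundedness-away-from-$1$ split.

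The substantive divergence is (iv). The paper disposes of it by citing Lemma~4 of \cite{gao00}, whereas you supply a self-contained argument: absolute summability and nonnegativity of the $\scaledTstar_n$ give $|\scaledTstar(x)|\le\scaledTstar(1)=1$ on the closed unit disk, aperiodicity (the Daffodil-lemma mechanism) gives strict inequality on the circle away from $x=1$, the singular expansion in (iii) plus the angular lower bound $\mathrm{Re}(1-x)^{1/2}\ge\sin(\tfrac{\phi}{2})|1-x|^{1/2}$ controls a small punctured neighborhood of $1$, and a compactness/uniform-continuity argument bridges the thin shell $1<|x|\le1+\eps$ away from $1$. This is exactly the content of the cited lemma, so your argument is not new mathematics, but it makes the proposition self-contained and makes visible what the citation hides: the $\delta$-domain's angular opening is precisely what forces the $(1-x)^{1/2}$ correction to point into the disk. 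One small bookkeeping point worth stating explicitly (you gesture at it with ``coordinating the thresholds''): you should choose $\eps<\delta_1$ so that the region $\{1<|x|\le1+\eps,\ |1-x|\ge\delta_1\}$ avoids the branch cut $[1,r^2]$, making $\scaledTstar$ continuous on its closure; otherwise the compactness step is fine. Overall the proof is correct and, for (iv), more elementary and more informative than the paper's one-line citation.
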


For the remainder of the paper we will fix $\eps_0$ and $\phi_0$ such that $|\tau(x)|\leq 1$ on  $\delta(\eps_0, \phi_0)$.

\begin{proof}

To see that (i) holds, observe that $\scaledTstar_0 = 0$ and all other coefficients are positive. Thus, $J$ can be taken to be $\mbb{N}$.

Setting $y = 1$ in the identity in Proposition \ref{addRootRecursion}, we find that $T^*(x)$ must satisfy
$$
	T^*(x) 
		= 
			x \, 
			\frac{ 1 + T^*(x) }{1 - T^*(x)}
	.
$$

\noindent 
Together with the condition $ T^*_0 = 0 $, this equation leads to the explicit form
\begin{align}
	\begin{split}
		T^*(x)	
			& = 	
					\frac{1 - x - \sqrt{x^2-6x+1}}{2}
			\\
			& = 	\frac{1 - x - \sqrt{(1/r-x)(r-x)}}{2}
			.
	\end{split} 
		\label{T*rootform}
\end{align}

\noindent 
This representation allows us to regard $T^*(x)$ as a function on $ \mbb C $ that is analytic on $\mbb C \setminus [\frac{1}{r}, r]$. 
Consequently, $\scaledTstar(x)$ is analytic on $\mbb C \setminus [1, r^2]$, and (ii) holds.

The expansion in (iii) follows from (\ref{T*rootform}):
\begin{align*}
	\scaledTstar(x) 	& = 	rs (1 - x/r - (1/r - x/r)^{1/2}(r - x/r)^{1/2}) \\
						& = 	s (r - 1 + 1 - x - (1-x)^{1/2}(q + \sigma(x)), \\
						& = 	1 + s(1-x) - qs(1-x)^{1/2} - s(1-x)^{1/2} \sigma(x).
\end{align*}

\noindent
To obtain the bound on  $ (1 - x)^{-1} \sigma(x) $, we first observe that $ \sigma(1) = 0 $ and $ \sigma(x) $ is differentiable at $ x = 1 $.
As a result, the quantity $ (1 - x)^{-1} \sigma(x) $ is a difference quotient, whereby it must be bounded near $ x = 1 $.
Away from $ x = 1 $, the quantity $ ( 1 - x )^{-1} $ is clearly bounded, and within any $ \delta $-domain, $ \sigma( x ) $ is bounded. 

The bound in (iv) follows directly from Lemma 4 in \cite{gao00}.
The boundedness claimed in (v) follows from the explicit form
\begin{align*}
	u(x)
			& = 	
					\scaledTstar'(x) 
				-	\frac{qs}{2}(1-x)^{-1/2}
			\\
			& = 	
				-	s
				-	s
					(1-x)^{1/2} 
					\sigma'(x)
				+	\frac{s}{2}
					(1-x)^{-1/2} 
					\sigma(x)
			\\
			& = 	
				-	s
				+	\frac{s}{2}
					(1-x)^{1/2} 
					(r^2-x)^{-1/2}
				+	\frac{s}{2}
					(1-x)^{-1/2} 
					\sigma(x)
\end{align*}
and the boundedness established in (iii).
The analyticity follows from the analyticity established in (ii), which $ \tau'(x) $ inherits from $ \tau(x) $.
\end{proof}

The above result immediately gives us our first coefficient estimate.

\begin{prop}
\label{propUnivariateCoefficients}
The coefficients of $T(x)$ are given by
$$
	T_n = \frac{qs^2}{2 \sqrt{\pi}}r^n n^{-3/2}(1 + O(1/n))
$$

\noindent as $n \to \infty$.
\end{prop}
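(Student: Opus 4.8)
The plan is to reduce the statement to the singular behaviour of $\scaledTstar$ at its dominant singularity $x=1$ and then apply the classical transfer theorem of singularity analysis. First I would record how $T(x)$ sits inside $\scaledTstar$. Setting $y=1$ in the first identity of Proposition~\ref{addRootRecursion}, and using that summing $T^*_{n,k}$ (resp.\ $T_{n,k}$) over $k$ recovers $T^*_n$ (resp.\ $T_n$), gives $T^*(x)=x+2\,T(x)$, hence $T_n=\tfrac12 T^*_n$ for $n\ge 2$. Combining this with the definition~\eqref{scaledTstarDef}, namely $T^*(x)=\tfrac{1}{2rs}\scaledTstar(rx)$, one obtains
$$
	T_n \;=\; \frac{r^{\,n-1}}{4s}\,[x^n]\scaledTstar(x)
	\qquad (n\ge 2),
$$
so it suffices to show $[x^n]\scaledTstar(x)=\tfrac{qs}{2\sqrt{\pi}}\,n^{-3/2}(1+O(1/n))$. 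Indeed, substituting this and using $4s^2r=1$ (which is $2s\sqrt r=1$ from~\eqref{rsIdentities} squared) turns the displayed identity into $T_n=\tfrac{q\,r^n}{8r\sqrt\pi}\,n^{-3/2}(1+O(1/n))=\tfrac{qs^2}{2\sqrt\pi}\,r^n n^{-3/2}(1+O(1/n))$, as claimed.

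To estimate $[x^n]\scaledTstar(x)$ I would invoke parts~(ii) and~(iii) of Proposition~\ref{scaledTstarProperties}. By~(ii), $\scaledTstar$ is analytic on $\mbb C\setminus[1,r^2]$, so $x=1$ is the unique dominant singularity and singularity analysis on a $\delta$-domain applies. By~(iii),
$$
	\scaledTstar(x)=1+s(1-x)-qs(1-x)^{1/2}-s(1-x)^{1/2}\sigma(x),
$$
with $\sigma(x)(1-x)^{-1}$ bounded on every $\delta$-domain. The polynomial part $1+s(1-x)$ contributes $0$ to the $n$th coefficient once $n\ge 2$. The transfer theorem gives $[x^n](1-x)^{1/2}=\tfrac{n^{-3/2}}{\Gamma(-1/2)}(1+O(1/n))=-\tfrac{1}{2\sqrt\pi}\,n^{-3/2}(1+O(1/n))$, so the term $-qs(1-x)^{1/2}$ contributes $\tfrac{qs}{2\sqrt\pi}\,n^{-3/2}(1+O(1/n))$. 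Finally, writing $-s(1-x)^{1/2}\sigma(x)=-s(1-x)^{3/2}\cdot\tfrac{\sigma(x)}{1-x}$, where $\tfrac{\sigma(x)}{1-x}$ is analytic (the apparent pole at $x=1$ is removable since $\sigma(1)=0$) and bounded on every $\delta$-domain, this remainder is $\widetilde{O}\bigl((1-x)^{3/2}\bigr)$ and hence has $n$th coefficient $O(n^{-5/2})$, which is absorbed into the relative error $O(1/n)$. Summing the three contributions gives the desired estimate for $[x^n]\scaledTstar(x)$, completing the proof.

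The only step needing care is the last $O(n^{-5/2})$ bound, i.e.\ checking that $(1-x)^{1/2}\sigma(x)$ satisfies the hypotheses of the transfer theorem on a $\delta$-domain (analyticity there and the correct order of vanishing at $x=1$). This is exactly what Proposition~\ref{scaledTstarProperties}(ii)--(iii) provides, so there is no real obstacle; everything else is bookkeeping, principally tracking the constant through the rescaling $x\mapsto rx$ and the identity $4s^2r=1$.
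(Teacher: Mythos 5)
Your proposal is correct and follows essentially the same route as the paper: both reduce the claim to the singular expansion of $\scaledTstar(x)$ at $x=1$ via the $y=1$ specialization of Proposition~\ref{addRootRecursion} together with the rescaling (\ref{scaledTstarDef}), and then apply the standard transfer theorem on a $\delta$-domain, with the $\sigma$-term contributing only an $O(n^{-5/2})$ error. The only difference is bookkeeping — the paper transfers the expansion of $T(x/r)$ directly, whereas you extract $[x^n]\scaledTstar(x)$ first and restore the constants afterward via $4rs^2=1$.
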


This result is well known, having first been obtained in \cite{flajolet1999analytic}, but we give a proof to illustrate our approach.

\begin{proof}
Using Proposition \ref{addRootRecursion} and (\ref{scaledTstarDef}), we find that $T(x)$ and $\scaledTstar(x)$ are related by the following equation:
$$
	T(x/r) 	= 	s \scaledTstar(x) - 2s^2 x.
$$

\noindent 
Together with Proposition \ref{scaledTstarProperties}, this tells us that $ T(x/r) $ can be regarded as an analytic function and has the expansion 
$$
	T(x/r) 	= 	s - qs^2(1-x)^{1/2} + s^2 (1-x) + O(1-x)^{3/2} - 2s^2 x
$$

\noindent 
on some $\delta$-domain. 
It is known (see Theorems VI.1-3 in \cite{flajolet09})
that a function of this form has coefficients given by
\begin{align*}
	r^{-n} T_n 	& = 	[x^n] T(x/r) \\
				& = 	- qs^2 [x^n] (1-x)^{1/2} + O([x^n](1-x)^{3/2}) \\
				& = 	\frac{qs^2}{2 \sqrt{\pi}} n^{-3/2}(1 + O(1/n)) + O(n^{-5/2}) \\
				& = 	\frac{qs^2}{2 \sqrt{\pi}} n^{-3/2}(1 + O(1/n))
\end{align*}

\noindent
as $ n \to \infty $.
\end{proof}

As in the above proof, our first step to obtaining a coefficient estimate is to identify a generating function as an analytic function.
In our next result, we complete this step for all of our generating functions.

\begin{prop}
	\label{analyticityOfGeneratingFunctions}
	Under some rescaling, each generating function in Section \ref{sectionFunctionalEquations} can be regarded as an analytic function on some $ \mc R $-domain.
	In particular,
	\begin{enumerate}[label=(\roman*)]
	
		\item
		$ T( x/r, y/2s ) $ and $ T^*( x/r, y/2s ) $ are analytic functions on $ \mc R^1( \eps_0, \phi_0 ) $, and
	
		\item
		$ \genFunFirstLeafShared( x/r, y/2s, z/2s ) $, 
		$ T^2( x/r, y/2s, z/2s ) $, 
		$ T^1( x/r, y/2s, z/2s ) $, 
		and 
		$ T^0( x/r, y/2s, z/2s ) $ 
		are analytic functions on $ \mc R^2( \eps_0, \phi_0 ) $.
	\end{enumerate}

\end{prop}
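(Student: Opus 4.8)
The plan is to apply the rescalings $x\mapsto x/r$, $y\mapsto y/2s$, $z\mapsto z/2s$ to the functional equations of Section \ref{sectionFunctionalEquations} and then read off analyticity directly from Proposition \ref{scaledTstarProperties} together with the normalization $\tau(x)=2rs\,T^*(x/r)$. The key algebraic observation is that the identity $2s\sqrt r=1$ from \eqref{rsIdentities} gives $4rs^2=1$, so that each denominator $1-\frac{y}{2s}T^*(x/r)$ appearing in Propositions \ref{subbranchRecursion}, \ref{addRootRecursion}, and \ref{firstLeafSharedIdentity} simplifies:
$$
	1-\frac{y}{2s}\,T^*(x/r)
		=1-\frac{y\,\tau(x)}{4rs^2}
		=1-y\,\tau(x),
$$
and likewise with $z$ in place of $y$. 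Since $\eps_0,\phi_0$ were fixed so that $|\tau(x)|\le 1$ on $\delta(\eps_0,\phi_0)$ (Proposition \ref{scaledTstarProperties}(iv)) and $|y|,|z|<1$ on an $\mc R$-domain, we get $|y\,\tau(x)|<1$ and $|z\,\tau(x)|<1$ there, so all of these denominators are nonvanishing; moreover $\tau$ is analytic on every $\delta$-domain (Proposition \ref{scaledTstarProperties}(ii)). For part (i), the rescaled functions $T(x/r,y/2s)$ and $T^*(x/r,y/2s)$ are, by Propositions \ref{subbranchRecursion} and \ref{addRootRecursion}, equal to a polynomial in $x,y$ times $\tau(x)$ (respectively $1+\tau(x)/(2rs)$) divided by $1-y\,\tau(x)$, hence analytic on $\mc R^1(\eps_0,\phi_0)$.

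For part (ii) I would start with $T^L(x/r,y/2s,z/2s)$: by Proposition \ref{firstLeafSharedIdentity} it is a polynomial in $x,y,z$ times $1+\tau(x)/(2rs)$ divided by the nonvanishing product $(1-y\,\tau(x))(1-z\,\tau(x))$, hence analytic on $\mc R^2(\eps_0,\phi_0)$, and in fact the same formula shows it equals $x^2y^2z^2$ times a function analytic on $\mc R^2(\eps_0,\phi_0)$. Consequently $T^L(\cdot)^2/(xyz)$ and $T^L(\cdot)^2/(xy^2z^2)$, after rescaling, are still products of $x,y,z$-monomials with functions analytic on $\mc R^2(\eps_0,\phi_0)$, so by Propositions \ref{twoSharedRecursion} and \ref{oneVertexSharedIdentity} the rescaled $T^2$ and $T^1$ are analytic there, the apparent poles along $\{x=0\}$, $\{y=0\}$, $\{z=0\}$ being removable. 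Finally, differentiating $\tau(x)=2rs\,T^*(x/r)$ gives $(T^*)'(x/r)=\tau'(x)/(2s)$, which is analytic on every $\delta$-domain by Proposition \ref{scaledTstarProperties}(v); multiplying the already-analytic $T^1(x/r,y/2s,z/2s)$ by this factor and invoking Proposition \ref{noneSharedIdentity} yields analyticity of the rescaled $T^0$ on $\mc R^2(\eps_0,\phi_0)$.

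I expect the only real work to be careful bookkeeping rather than any conceptual difficulty: one must check that after rescaling each denominator collapses exactly to $1-y\,\tau(x)$ or $1-z\,\tau(x)$ (this is where $4rs^2=1$ is used, and where the bound $|\tau|\le1$ on $\delta(\eps_0,\phi_0)$ enters), and that the monomial prefactors produced by the rescalings carry enough powers of $x$, $y$, $z$ to absorb the explicit divisions by $xyz$ and $xy^2z^2$ in Propositions \ref{twoSharedRecursion} and \ref{oneVertexSharedIdentity}. A miscount there would leave a genuine pole inside the $\mc R$-domain, so that is the step to watch; everything else is a direct citation of the analyticity and boundedness facts collected in Proposition \ref{scaledTstarProperties}.
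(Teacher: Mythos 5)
Your proposal is correct and follows essentially the same route as the paper: both arguments reduce to showing that the rescaled denominator $1-\tfrac{y}{2s}T^*(x/r)$ (equivalently $1-y\,\tau(x)$, via $4rs^2=1$) is analytic and nonvanishing on the $\mc R$-domain using Proposition \ref{scaledTstarProperties}(ii) and (iv), and then chain through the functional equations of Section \ref{sectionFunctionalEquations}. Your explicit check that the monomial prefactor $x^2y^2z^2$ of $\genFunFirstLeafShared$ absorbs the divisions by $xyz$ and $xy^2z^2$ is a detail the paper leaves implicit, but it is the same proof.
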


\begin{proof}
	We first show that the formal power series
	$$
	f( x, y ) 
				= 
				 		\left( 1 - \frac{y}{2s} T^*( x/r ) \right)^{ -1 }
				=
						\sum_{ k \ge 0 }
							\left( \frac{y}{2s} T^*( x/r ) \right)^k
	$$
	
	\noindent
	can be identified as an analytic function on $ \mc R^1( \eps_0, \phi_0 ) $. 
	Since $ T^*(x/r) $ has been identified as an analytic function on every $ \delta $-domain (see Proposition \ref{scaledTstarProperties}(ii)), it suffices to establish the bound
	$$
		\left|
				\frac{y}{2s}
				T^*( x/r )
		\right|
				<
						1
	$$	
	
	\noindent

	on $ \mc R^1( \eps_0, \phi_0 ) $.
	This bound follows from the second identity in (\ref{rsIdentities}) and the bound in Proposition \ref{scaledTstarProperties}(iv).
	
	Given the analyticity of $ T^*(x/r) $ and $ f( x, y) $, Propositions \ref{subbranchRecursion}, \ref{addRootRecursion}, and \ref{firstLeafSharedIdentity} establish (i) and the analyticity of $ \genFunFirstLeafShared(x/r, y/2s, z/2s ) $.
	Applying Propositions \ref{firstLeafSharedIdentity}, \ref{twoSharedRecursion}, \ref{oneVertexSharedIdentity}, and \ref{noneSharedIdentity} concludes the proof.
\end{proof}


The next step in the procedure is to obtain singular expansions for our generating functions.
It will be helpful to first analyze the functions
$$
	A_k(x, y) 	
		= 	
			\frac{x^k}{1 - y \scaledTstar(x)}
		\qquad
		\text{and}
		\qquad
	B_k(x, y) 
		= 	
			A_k(x, y) \, \scaledTstar(x)
		,
$$
	
\noindent 
defined for each non-negative integer $k$ on $ \myRdomain $. 
The basic properties of these functions are summarized in the following proposition.

\begin{prop}
	\label{generalExpansions}
	On $ \myRdomain $, each $ A_k $ and $ B_k $
	\begin{enumerate}[label=(\roman*)]
	
		\item 
		is analytic,
		
		\item 
		is bounded by a multiple of $D(y) = (1-|y|)^{-1}$, and
	
		\item 
		has a singular expansion given by
		\begin{equation}
			A_k(x, y)	
				= 	
					\frac{1}{1 - y} - \frac{ qs y (1-x)^{1/2} }{(1 - y)^2} + \widetilde{O}( (1-x)(1 - y)^{-3} )
				, 
		\label{expansionA} 
		\end{equation}
		
		\noindent
		or
		\begin{equation}
			B_k(x, y)	
				= 	
					\frac{1}{1 - y} - \frac{ qs (1-x)^{1/2} }{(1 - y)^2} + \widetilde{O}( (1-x)(1 - y)^{-3} ) 
				.
		\label{expansionB}
		\end{equation}
	\end{enumerate}
\end{prop}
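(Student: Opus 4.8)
The plan is to derive everything from the singular expansion of $\scaledTstar(x)$ established in Proposition \ref{scaledTstarProperties}(iii), namely
\[
	\scaledTstar(x) = 1 - qs(1-x)^{1/2} + s(1-x) - s(1-x)^{1/2}\sigma(x),
\]
together with the bounds $|\scaledTstar(x)|\le 1$ on $\delta(\eps_0,\phi_0)$ and $|\sigma(x)(1-x)^{-1}| = O(1)$ on every $\delta$-domain. Analyticity of $A_k$ and $B_k$ on $\myRdomain$ is immediate: $x^k$ and $\scaledTstar(x)$ are analytic there (Proposition \ref{scaledTstarProperties}(ii)), and $1 - y\scaledTstar(x)$ is non-vanishing on $\myRdomain$ because $|y|<1$ and $|\scaledTstar(x)|\le 1$ force $|y\scaledTstar(x)|<1$; this also shows $|1 - y\scaledTstar(x)| \ge 1 - |y|$, which gives the bound in (ii) since $|x^k|\le(1+\eps_0)^k$ is bounded for fixed $k$ and $|\scaledTstar(x)|\le 1$ handles the extra factor in $B_k$.

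The substance is the singular expansion (iii). First I would write $1 - y\scaledTstar(x) = (1-y) + y(1 - \scaledTstar(x))$ and substitute the expansion of $1-\scaledTstar(x) = qs(1-x)^{1/2} + s(1-x)^{1/2}\sigma(x) - s(1-x)$, so that
\[
	1 - y\scaledTstar(x) = (1-y)\Bigl(1 + \frac{y}{1-y}\bigl(qs(1-x)^{1/2} + s(1-x)^{1/2}\sigma(x) - s(1-x)\bigr)\Bigr).
\]
Then invert the parenthesized factor as a geometric series $1 - w + w^2 - \cdots$ with $w = \frac{y}{1-y}(qs(1-x)^{1/2} + \cdots)$, keeping terms up to $(1-x)^{1/2}$ and collecting the rest into an error. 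This yields
\[
	\frac{1}{1 - y\scaledTstar(x)} = \frac{1}{1-y} - \frac{qsy(1-x)^{1/2}}{(1-y)^2} + (\text{error}),
\]
and one multiplies by $x^k = 1 + O(1-x)$ (for $A_k$) or additionally by $\scaledTstar(x) = 1 + O((1-x)^{1/2})$ (for $B_k$); the factor $x^k$ contributes only to the error at order $(1-x)$, while for $B_k$ the leading correction absorbs the $y$ from the product $y\scaledTstar(x)$, replacing $qsy$ by $qs$ — this is exactly why (\ref{expansionB}) has no $y$ in the numerator of the second term.

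The main obstacle is verifying that the remainder genuinely has the form $\widetilde{O}((1-x)(1-y)^{-3})$ in the precise sense of the relevant definition: one must exhibit a single $\rDomain$ on which the function is analytic, a global polynomial-type bound of the form $O((1-x)^{-\alpha'}(1-|y|)^{-\beta'})$, and the refined bound $O((1-x)(1-y)^{-3})$ in the regime $(1-x)(1-y)^{-\gamma}\to 0$. The global bound comes from part (ii) (so $\alpha'=0$, $\beta'=1$ works after absorbing the explicit terms); the refined bound requires estimating each term dropped from the geometric series — a typical term is $\bigl(\tfrac{y}{1-y}\bigr)^j$ times a product of $j$ factors each $O((1-x)^{1/2})$, which is $O\bigl((1-y)^{-j}(1-x)^{j/2}\bigr)$, and for $j\ge 2$ this is $O((1-x)(1-y)^{-3})$ once $j$ is controlled using $(1-x)(1-y)^{-\gamma}\to 0$ with a suitable $\gamma$ (here $\gamma = 2$ suffices to dominate the $j=2,3$ terms and to make the tail of the series converge geometrically). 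The contributions of $\sigma(x)$ are handled by replacing $(1-x)^{1/2}\sigma(x)$ with $(1-x)^{3/2}\cdot O(1)$, which is even smaller. Care is also needed at the branch point $x=1$ to see that the square roots are the same single-valued branch on the $\delta$-domain, but this is inherited from Proposition \ref{scaledTstarProperties}. Once these bookkeeping estimates are assembled, (\ref{expansionA}) and (\ref{expansionB}) follow.
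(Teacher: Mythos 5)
Your proposal is correct, and parts (i) and (ii) coincide with the paper's argument ($|y\scaledTstar(x)|<1$ on $\myRdomain$, hence $|1-y\scaledTstar(x)|\ge 1-|y|$). For the expansion in (iii) you take a genuinely different route to the key estimate: you factor $1-y\scaledTstar(x)=(1-y)(1+w)$ with $w=\tfrac{y}{1-y}\bigl(1-\scaledTstar(x)\bigr)$ and invert by the geometric series, bounding the discarded tail term by term. The paper instead defines the remainder $E(x,y)=A_0(x,y)-\tfrac{1}{1-y}+\tfrac{qsy(1-x)^{1/2}}{(1-y)^2}$ exactly, multiplies through by $(1-y)(1-y\scaledTstar(x))$ to obtain a closed-form expression involving only $\sigma(x)$ and $1-\scaledTstar(x)$, and then reads off the \emph{global} bound $|E|\le M|1-x|(1-|y|)^{-3}$ on all of $\myRdomain$ from $|1-y\scaledTstar(x)|\ge 1-|y|$ and the estimates $|\sigma(x)|,|1-\scaledTstar(x)|=O(|1-x|^{1/2})$; this sidesteps any convergence question and verifies conditions (ii) and (iii) of the $\widetilde{O}$ definition with a single inequality. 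Your route is more delicate because the series for $(1+w)^{-1}$ diverges on much of $\myRdomain$ (e.g.\ $y$ near $1$ with $x$ away from $1$), but you correctly observe that the definition of $\widetilde{O}$ only requires the refined bound in the regime $(1-x)(1-y)^{-\gamma}\to 0$, where $w\to 0$, and that the crude global bound can be supplied separately from part (ii); your bookkeeping of the $j\ge 2$ terms ($w^j/(1-y)=O((1-x)^{j/2}(1-y)^{-j-1})$, dominated by the $j=2$ term when $(1-x)(1-y)^{-2}$ is small) is sound. The remaining steps — $x^k=1+O(1-x)$ for fixed $k$, and the merging $\tfrac{y}{(1-y)^2}+\tfrac{1}{1-y}=\tfrac{1}{(1-y)^2}$ that removes the $y$ from the numerator in (\ref{expansionB}) — match the paper's treatment of $A_k-A_0$ and $A_k-B_k$. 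In short: same inputs and same conclusion, but the paper's exact algebraic manipulation buys a cleaner, stronger (global) error bound, while your series inversion is the more generic technique and works here with the care you have indicated.
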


\begin{proof}
The analyticity in (i) can be established as in Proposition \ref{analyticityOfGeneratingFunctions}.
The bound in (ii) follows from Proposition \ref{scaledTstarProperties}(iv).
To verify the expansion for $A_0(x, y)$, we define
$$
	E(x, y)	
		=
			A_0(x, y) - \frac{1}{1 - y} + \frac{qs y (1-x)^{1/2}}{(1 - y)^2}.
$$

\noindent 
It follows from (i) that $E(x, y)$ is analytic on $\myRdomain$. 
To obtain a bound on $E(x, y)$, we write
\begin{align*}
	E(x, y) 
	(1-y)
	(1-y\scaledTstar(x))	
			& = 	
					(1-y) - (1-y\scaledTstar(x)) + qs y (1-x)^{1/2} \frac{1-y\scaledTstar(x)}{1 - y} \\
			& = 	
					y(\scaledTstar(x)-1) + qs y (1-x)^{1/2} \bigg(1 + y \, \frac{1-\scaledTstar(x)}{1-y}\bigg) \\
			& = 	
					sy (1-x) - sy (1-x)^{1/2} \sigma(x) \\
			& \quad
					+ qs y^2 (1-x)^{1/2} \, \frac{1-\scaledTstar(x)}{1-y}
			,
\end{align*}

\noindent 
and
observe that both $\sigma(x)$ and $ 1 - \scaledTstar(x)$ can be bounded by a constant times $|1-x|^{1/2}$ on $\delta(\eps_0, \phi_0)$.
From this,
it follows that
$$
	|E(x, y)| 	\le 	M \frac{|1-x|}{(1 - |y|)^3}
$$

\noindent 
on $\myRdomain$ for some $M > 0$,
establishing
the expansion for $A_0(x, y)$.

To verify that $A_k(x, y)$ and $A_0(x, y)$ have the same expansion, we simply note that their difference is negligible:
\begin{align*}
	A_k(x, y) - A_0(x, y)	& = 	\frac{x^k-1}{1-y\scaledTstar(x)} \\
							& = 	\widetilde{O} \left( (1-x)(1-y)^{-1} \right).
\end{align*}

\noindent 
Similarly, the relationship given by
\begin{align*}
	A_k(x, y) - B_k(x, y)	
			& = 	
					A_k(x, y) ( 1 - \scaledTstar(x) ) 
			\\
			& = 	
					\left(
							(1-y)^{-1} + \widetilde{O}\left( (1-x)^{1/2}(1-y)^{-3} \right)
					\right) 
			\\
			& \quad 
					\times \big( qs(1-x)^{1/2} + O(1-x) \big) 
			\\
			& = 	qs(1-x)^{1/2}(1-y)^{-1} + \widetilde{O}\left( (1-x)(1-y)^{-3} \right)
\end{align*}

\noindent 
verifies that the expansions for the $B_k(x, y)$ hold.
\end{proof}

The above result gives us our next singular expansion, and with it, our next coefficient estimates.

\begin{prop}
\label{propBivariateCoefficients}
The following statements hold:
\begin{enumerate}[label = (\roman*)]
		\item
		\label{claim bivariate expansion}
		$T(x/r, y/2s) \approx -2qs^2 (1-x)^{1/2} (1-y)^{-2}$,
		
		\item
		\label{claim bivariate coefficient estimate all k}
		for every $ p \in ( 0, r^{ 1/2 } ) $ there exists some $ M > 0 $ such that
    	$$
			T_{n, k}
    			\le
    				M
					r^n
					n^{ -3/2 }
					p^{ -k }
    	$$

		\noindent
		for all $ n, k \ge 2 $, and

		\item
		\label{claim bivariate coefficient estimate log k}
		for every $ L > 0 $ there exists some $ M > 0 $ such that
    	$$
    		\left|
    			T_{n, k}
    		-
    			\frac{ 
            			qs^2
    				}{
    					\sqrt{\pi}
    				}
				\,
    			r^{n-k/2}
    			n^{ -3/2 }
    			k
    		\right|	
    			\le
    				M
    				r^{n-k/2}
					n^{ -3/2 }
    	$$

    	\noindent
    	whenever $ k \le L \log n $ and $ k $ is large.
		
\end{enumerate}
\end{prop}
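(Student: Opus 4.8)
The plan is to express the rescaled generating function $T(x/r,y/2s)$ in terms of the auxiliary function $B_1(x,y)=x\scaledTstar(x)/(1-y\scaledTstar(x))$ from Proposition~\ref{generalExpansions}, and then invoke the transfer lemmas, Propositions~\ref{prop GW Lemma 2} and~\ref{prop GW Lemma 3}. First I would start from the relation $T(x,y)=xy^2T^*(x)/(1-yT^*(x))$ of Proposition~\ref{subbranchRecursion}, substitute $x\mapsto x/r$ and $y\mapsto y/2s$, and simplify using the definition~(\ref{scaledTstarDef}) of $\scaledTstar$ together with the identities in~(\ref{rsIdentities}); since $4rs^2=1$ and $8r^2s^3=1/(2s)$, this collapses to the clean identity $T(x/r,y/2s)=2s\,y^2\,B_1(x,y)$. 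Because $2s=r^{-1/2}$, it follows that $T_{n,k}=r^{n-k/2}\,[x^ny^k]\,T(x/r,y/2s)$, which already explains the scaling $r^{n-k/2}$ in parts~\ref{claim bivariate coefficient estimate all k} and~\ref{claim bivariate coefficient estimate log k}.

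For part~\ref{claim bivariate expansion}, I would insert the expansion $B_1(x,y)=(1-y)^{-1}-qs(1-x)^{1/2}(1-y)^{-2}+\widetilde{O}((1-x)(1-y)^{-3})$ from Proposition~\ref{generalExpansions}(iii) and multiply through by $2sy^2$, which is analytic and bounded on $\myRdomain$ and therefore preserves $\widetilde{O}$-estimates. This gives
$$
	T(x/r,y/2s)=-2qs^2y^2(1-x)^{1/2}(1-y)^{-2}+\frac{2sy^2}{1-y}+\widetilde{O}\big((1-x)(1-y)^{-3}\big).
$$
To extract the $\approx$ relation I would take $C(y)=-2qs^2y^2$ (analytic, and $C(y)=-2qs^2+O(|1-y|)$ near $y=1$, so the leading constant $c=-2qs^2$ is nonzero), absorb the $x$-regular summand $2sy^2/(1-y)$ into the $C_0$ term of the definition of $\approx$, set the remaining polynomial $C_1$ to zero, and note that the remainder is $\widetilde{O}((1-x)^{-\alpha'}(1-y)^{-\beta'})$ with $\alpha'=-1<-\tfrac12$. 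The analyticity and global bound on an $\mc R$-domain demanded by the definition are supplied by Proposition~\ref{generalExpansions}(i)--(ii). This yields $T(x/r,y/2s)\approx-2qs^2(1-x)^{1/2}(1-y)^{-2}$.

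With the expansion in hand, part~\ref{claim bivariate coefficient estimate log k} follows by applying Proposition~\ref{prop GW Lemma 3} with $\alpha=-\tfrac12$, $\beta_1=2$, $c=-2qs^2$: since $\Gamma(-\tfrac12)=-2\sqrt\pi$ and $\Gamma(2)=1$, the main term is $qs^2\pi^{-1/2}k\,n^{-3/2}$, the error is $O(n^{-3/2})$, and multiplying by $r^{n-k/2}$ gives the stated estimate. For part~\ref{claim bivariate coefficient estimate all k} I would instead split off the $x$-regular part, writing $T(x/r,y/2s)=2sy^2/(1-y)+\widetilde T(x,y)$ with $\widetilde T(x,y)=2sy^2\big(B_1(x,y)-(1-y)^{-1}\big)=\widetilde{O}((1-x)^{1/2}(1-y)^{-2})$; since the coefficient of $x^n$ in $2sy^2/(1-y)$ vanishes for $n\geq1$, Proposition~\ref{prop GW Lemma 2}(ii) (again with $\alpha=-\tfrac12$) yields, for each $\eps\in(0,1)$, a constant $M$ with $\big|[x^ny^k]T(x/r,y/2s)\big|\leq Mn^{-3/2}\eps^{-k}$ for all $n\geq1$ and $k$. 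As $T_{n,k}\geq0$, this gives $T_{n,k}\leq Mr^nn^{-3/2}(2s/\eps)^k$, and choosing $\eps\in(2sp,1)$ — possible because $2sp<2s\sqrt r=1$ — produces $T_{n,k}\leq Mr^nn^{-3/2}p^{-k}$.

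The genuinely delicate step is the middle one: getting the rescaling constants right through the $r,s$ identities, and then checking that the expression obtained after multiplying by $2sy^2$ really does satisfy the somewhat technical definition of $\approx$ — in particular that the summand $2sy^2/(1-y)$, which is singular in $y$ but regular in $x$, legitimately plays the role of the ``$C_0$'' term, and that multiplying the $\widetilde{O}$-remainder by the bounded factor $2sy^2$ (and by $y^2$ again in the argument for part~\ref{claim bivariate coefficient estimate all k}) does not disturb the estimate in the relevant $\mc R$-domain regime. Once the $\approx$ relation is established, the remaining work is routine bookkeeping with the two transfer lemmas.
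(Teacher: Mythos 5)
Your proof is correct and follows essentially the same route as the paper: rewrite $T(x/r,y/2s)=2s\,y^2 B_1(x,y)$ via Propositions~\ref{subbranchRecursion}, \ref{generalExpansions} and the identities $4rs^2=1$, $2s=r^{-1/2}$, read off the $\approx$-expansion with $C_0(x,y)=2sy^2/(1-y)$ and $C(y)=-2qs^2y^2$, then transfer via Proposition~\ref{prop GW Lemma 3} for part~(iii) and Proposition~\ref{prop GW Lemma 2}(ii) for part~(ii). The only cosmetic difference is that you bound the singular remainder by $\widetilde O\big((1-x)^{1/2}(1-y)^{-2}\big)$ where the paper lumps it as $\widetilde O\big((1-x)^{1/2}(1-y)^{-3}\big)$; either works since Lemma~2(ii) gives $\eps^{-k}$ independently of the $y$-exponent.
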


\begin{proof}

	Combining Propositions \ref{subbranchRecursion} and \ref{generalExpansions}, we obtain the expansion
	\begin{align*}
		T(x/r, y/2s)	
			& =		
					\frac{2s x y^2 \scaledTstar(x)}{1 - y \scaledTstar(x)} 
			\\
			& =		
					2sy^2 B_1(x, y)
			\\
			& = 		
					\frac{2s y^2 }{1 - y} - \frac{  2qs^2  y^2 (1-x)^{1/2}}{(1 - y)^2} + \widetilde{O}\left((1-x)(1-y)^{-3} \right)
			.
	\end{align*}
	
	\noindent 
	Letting 
	$
		C_0(x, y) = 2s y^2 (1 - y)^{-1}
	$ 
	and 
	$ 
		C(y) = - 2 q s^2 y^2 
	$, 
	we have that 
		$ C_0( x, y ) $ is a polynomial in $x$, 
		$ C(1) = -2 q s^2 \neq 0 $, 
		$ C $ is analytic on every $\delta$-domain, and 
		$ C(y) = C(1) + O(1-y) $ on every $\delta$-domain. 
	This establishes (i).

	Let $ p \in ( 0, r^{ 1/2 } ) $.
	The above expansion and Proposition \ref{prop GW Lemma 2} give us that
	\begin{align*}
		[x^n y^k ]
		T(x/r, y/2s)	
			& =
					[x^n y^k ]
					\frac{2s y^2 }{1 - y} 
				+
					[x^n y^k ] 
					\widetilde{O}(1-x)^{1/2}(1-y)^{-3}
			\\
			& \le
    				M
					n^{ -3/2 }
					\left( \frac{ p }{ r^{1/2} } \right)^{ -k }
	\end{align*}

	\noindent
	for some $ M > 0 $ and all $ n, k \ge 2 $.
	Since
	$
		T_{n, k} 	
				= 
					r^n (2s)^k \,
					[x^n y^k] 
					T(x/r, y/2s) 
				,
	$
	we can introduce the factor $ r^n (2s)^k = r^{ n - k/2} $ to obtain
	$$
		T_{n, k}	
			\le
    				M
					r^n 
					n^{ -3/2 }
					p^{ -k }
			,
	$$

	\noindent
	establishing \ref{claim bivariate coefficient estimate all k}.

 Let $ L > 0 $. 
 Combining \ref{claim bivariate expansion} and Proposition \ref{prop GW Lemma 3}, we obtain the bound
	\begin{align*}
		\left|
				[x^n y^k] 
				T(x/r, y/2s) 
		-
			( -2qs^2 )\,
			\frac{ 
					n^{ -3/2 }
				}{
					\Gamma( - \frac{1}{2} )
				}
			\frac{ 
					k
				}{
					\Gamma( 2 )
				}
		\right|	
			\le
				M
				n^{ -3/2 }
	\end{align*}

	\noindent
	for some $ M > 0 $ whenever $ k \le L \log n $ and $ k $ is large.
	Multiplying by $ r^n (2s)^k = r^{ n - k/2} $, this becomes
	\begin{align*}
		\left|
				T_{n, k}
		-
			\frac{ 
					-2 q s^2
				}{
					-2 \sqrt{ \pi }
				}
				\,
				r^{ n - k/2}
				n^{ -3/2 }
				k
		\right|	
			\le
				M
				r^{ n - k/2}
				n^{ -3/2 }
			,
	\end{align*}

	\noindent
	establishing \ref{claim bivariate coefficient estimate log k}.

\end{proof}


It only remains to obtain a singular expansion for $ T( x/r, y/2s, z/2s) $.
This is done in the next two results.

\begin{prop} 
\label{trivariateExpansions}

The following statements hold:
\begin{enumerate}[label = (\roman*)]
		\item
		\label{claim T1 expansion}
		$
			T^1(x/r, y/2s, z/2s) 	
				= 	
					\widetilde{O}\left( (1-y)^{-6} (1-z)^{-6} \right)
				,
		$

		\item
		\label{claim T2 expansion}
		$
			T^2(x/r, y/2s, z/2s)
				= 	
					\widetilde{O}\left( (1-y)^{-6} (1-z)^{-6} \right)
				,
		$
		and
		
		\item
		\label{claim T0 expansion}
		$
			\displaystyle
			T^0(x/r, y/2s, z/2s) 	
				\approx
					2 q s^2
					(1-x)^{-1/2}
					(1 - y)^{-2} 
					(1 - z)^{-2}
				.
		$
\end{enumerate}

\end{prop}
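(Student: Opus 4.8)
The plan is to derive all three expansions from the functional identities of Section~\ref{sectionFunctionalEquations} together with the singular expansions of the auxiliary functions $A_1$ and $B_1$ from Proposition~\ref{generalExpansions}, with Proposition~\ref{noneSharedIdentity} doing the real work for part~\ref{claim T0 expansion}. Analyticity on $\mc R^2(\eps_0,\phi_0)$ is already supplied by Proposition~\ref{analyticityOfGeneratingFunctions}, so only size estimates remain. First I would compute the rescaled $\genFunFirstLeafShared$: substituting $x\mapsto x/r$, $y\mapsto y/2s$, $z\mapsto z/2s$ in Proposition~\ref{firstLeafSharedIdentity} and using the identities $s(r-1)=1=2s\sqrt r$ (so $4rs^2=1$, hence $(y/2s)T^*(x/r)=y\tau(x)$, $(x/r)^2(y/2s)^2(z/2s)^2=x^2y^2z^2$, and $1+T^*(x/r)=1+2s\tau(x)$ with $1+2s=\sqrt2$), one obtains
$$\genFunFirstLeafShared(x/r,y/2s,z/2s)=\frac{(1+2s\tau(x))\,x^2y^2z^2}{(1-y\tau(x))(1-z\tau(x))}=y^2z^2\big(A_1(x,y)+2sB_1(x,y)\big)A_1(x,z).$$

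Multiplying the expansions \eqref{expansionA} and \eqref{expansionB} for $A_1(x,y)$, $B_1(x,y)$, $A_1(x,z)$ and collecting powers of $(1-x)^{1/2}$ then gives
$$\genFunFirstLeafShared(x/r,y/2s,z/2s)=\frac{\sqrt2\,y^2z^2}{(1-y)(1-z)}+(1-x)^{1/2}R(y,z)+\widetilde O\big((1-x)(1-y)^{-3}(1-z)^{-3}\big),$$
with $R$ an explicit rational function having denominators $(1-y)^2(1-z)$ and $(1-y)(1-z)^2$; I will not need $R$ itself. Feeding this into Propositions~\ref{oneVertexSharedIdentity} and~\ref{twoSharedRecursion} --- where the rescaling is again clean, $(x/r)(y/2s)^2(z/2s)^2=xy^2z^2/(4s^2)$ and $(x/r)(y/2s)(z/2s)=xyz$ because $4rs^2=1$ --- yields
$$T^1(x/r,y/2s,z/2s)=\frac{4s^2(1+2s\tau(x))^2x^3y^2z^2}{(1-y\tau(x))^2(1-z\tau(x))^2},\qquad T^2(x/r,y/2s,z/2s)=2\genFunFirstLeafShared(x/r,\cdot,\cdot)+\frac{\genFunFirstLeafShared(x/r,\cdot,\cdot)^2}{xyz}.$$
On $\mc R^2(\eps_0,\phi_0)$ one has $|\tau(x)|\le1$, hence $|1-y\tau(x)|\ge1-|y|$ and $|1-z\tau(x)|\ge1-|z|$, while $x^3(1+2s\tau(x))^2$ and $xyz$ are bounded (and no $1/x$ can occur, since every substituted denominator is paired with a compensating power of $x$); so $T^1(x/r,\cdot,\cdot)$ and $T^2(x/r,\cdot,\cdot)$ are $O((1-|y|)^{-2}(1-|z|)^{-2})$ there, which with analyticity gives the (non-tight but sufficient) bounds $\widetilde O((1-y)^{-6}(1-z)^{-6})$ of parts~\ref{claim T1 expansion} and~\ref{claim T2 expansion}. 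Squaring the $\genFunFirstLeafShared$-expansion also records, for the next step, that
$$T^1(x/r,y/2s,z/2s)=\frac{8s^2y^2z^2}{(1-y)^2(1-z)^2}+(1-x)^{1/2}(\cdots)+\widetilde O\big((1-x)(1-y)^{-4}(1-z)^{-4}\big),$$
the $(1-x)^0$-term being obtained by letting $\tau(x)\to1$ and $x\to1$ and using $(1+2s)^2=2$.

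The crux is part~\ref{claim T0 expansion}. Rescaling Proposition~\ref{noneSharedIdentity} and using $\tau'(x)=2s\,(\partial_xT^*)(x/r)$ together with Proposition~\ref{scaledTstarProperties}(v), I would write
$$T^0(x/r,y/2s,z/2s)=T^1(x/r,y/2s,z/2s)\Big(\tfrac q4(1-x)^{-1/2}+v(x)\Big),\qquad v:=\tfrac1{2s}\,u,$$
with $v$ analytic and bounded on every $\delta$-domain. In this product, $T^1(x/r,\cdot,\cdot)\,v(x)$ is $\widetilde O((1-y)^{-6}(1-z)^{-6})$, i.e. an error term with $\alpha'=0<\tfrac12$, and $\tfrac q4(1-x)^{-1/2}$ times the $(1-x)^{1/2}$- and higher-order parts of $T^1$ is likewise $\widetilde O$ with $\alpha'=0<\tfrac12$; the only surviving singular contribution is $\tfrac q4(1-x)^{-1/2}$ times the $(1-x)^0$-term of $T^1$, namely $\tfrac q4\cdot\tfrac{8s^2y^2z^2}{(1-y)^2(1-z)^2}=\tfrac{2qs^2y^2z^2}{(1-y)^2(1-z)^2}$. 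Since $2qs^2y^2z^2=2qs^2+O(|1-y|+|1-z|)$ near $y=z=1$ and $2qs^2\ne0$, this is exactly $T^0(x/r,y/2s,z/2s)\approx2qs^2(1-x)^{-1/2}(1-y)^{-2}(1-z)^{-2}$, with $C(y,z)=2qs^2y^2z^2$.

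The hard part will be the bookkeeping in the middle paragraph: carrying the two bivariate expansions through a product and then a square while checking that each cross term falls under $\widetilde O$ with the stated exponents (the generous $(1-y)^{-6}(1-z)^{-6}$ accommodating, e.g., the square of the $(1-y)^{-3}(1-z)^{-3}$ remainder), and verifying that no factor $1/x$ slips in --- which it cannot, precisely because $4rs^2=1$ turns each substituted denominator into a compensating power of $x$. Once the $(1-x)^0$-term of $T^1$ is identified, part~\ref{claim T0 expansion} is a single multiplication, the constant falling out as $\tfrac q4\cdot4s^2(1+2s)^2=qs^2(1+2s)^2=2qs^2$.
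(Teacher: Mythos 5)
Your proposal is correct and follows essentially the same route as the paper: expand the rescaled $\genFunFirstLeafShared$ via the $A_k,B_k$ expansions of Proposition \ref{generalExpansions}, push this through Propositions \ref{oneVertexSharedIdentity} and \ref{twoSharedRecursion} for $T^1$ and $T^2$, and then multiply by the singular expansion of $(\deriv{x}T^*)(x/r)$ from Propositions \ref{noneSharedIdentity} and \ref{scaledTstarProperties}(v) to isolate the $(1-x)^{-1/2}$ term of $T^0$; your constants ($8s^2=4q^4s^6$ and $2qs^2=q^5s^6$) agree with the paper's. The only differences are cosmetic --- you use $1+2s=\sqrt2$ in place of $q^2s^2$, keep a slightly finer intermediate expansion of $T^1$ than needed, and obtain the global bounds in (i)--(ii) directly from $|\scaledTstar(x)|\le1$ rather than from the squared expansion.
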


\begin{proof}

Applying Proposition \ref{generalExpansions} leads to the following expansion:
\begin{align*}
	&
		A_0( x, y ) 
		( 
			A_1( x, z )
		+	2 s 
			B_1( x, z )
		)
	\\
		& \hspace{ 12 mm } = 	
			\left(
					\frac{1}{1 - y}
				 + 	\widetilde{O}\left( (1-x)^{1/2} (1 - y)^{-3} \right)
			\right)
			\left(
					\frac{ 1 + 2 s }{1 - z}
				 + 	\widetilde{O}\left( (1-x)^{1/2} (1 - z)^{-3} \right)
			\right)
		\\
		& \hspace{ 12 mm } = 	
					\frac{ 1 + 2 s }{(1 - y)(1 - z)}
				 + 	\widetilde{O}\left( (1-x)^{1/2} (1 - y)^{-3} (1 - z)^{-3} \right)
		.
\end{align*}

\noindent
Combining this with the identity $ 1 + 2 s = q^2 s^2 $ and Proposition \ref{firstLeafSharedIdentity}, we obtain
\begin{align*}
	&
	\genFunFirstLeafShared(x/r, y/2s, z/2s)
	\\
		& \hspace{ 15 mm } = 	
			x y^2 z^2
			( 1 + 2s \scaledTstar(x) )	 
			\frac{1}{1 - y \scaledTstar(x)} \frac{x}{1 - z \scaledTstar(x)} 
		\\
		& \hspace{ 15 mm } = 	
			x y^2 z^2 
			( 1 + 2s \scaledTstar(x) )
			A_0( x, y ) A_1( x, z )
		\\
		& \hspace{ 15 mm } = 	
			x y^2 z^2
			A_0( x, y ) 
			( A_1( x, z ) + 2s B_1( x, z ) )
		\\
		& \hspace{ 15 mm } = 	
			x y^2 z^2
			\left(
					\frac{
							q^2 s^2
						}{
							(1 - y)
							(1 - z)
						}
				 + 	\widetilde{O}\left( (1-x)^{1/2} (1 - y)^{-3} (1 - z)^{-3} \right)
			\right)
		.
\end{align*}

\noindent 
Substituting this into the identity of Proposition \ref{oneVertexSharedIdentity}, we obtain \ref{claim T1 expansion}:
\begin{align}
\begin{split}
	&
		T^1(x/r, y/2s, z/2s) 
	\\
			& \hspace{ 10 mm } = 	
						4 s^2 x^{-1} y^{-2} z^{-2} 
						\genFunFirstLeafShared(x/r, y/2s, z/2s)^2
			\\
			& \hspace{ 10 mm } = 	
						4 s^2 x y^2 z^2
				\left(
						\frac{
								q^2 s^2
							}{
								(1 - y)
								(1 - z)
							}
					 + 	\widetilde{O}\left( (1-x)^{1/2} (1 - y)^{-3} (1 - z)^{-3} \right)
				\right)^2
			\\
			& \hspace{ 10 mm } = 	
						4 s^2 x y^2 z^2
				\left(
						\frac{
								q^4 s^4
							}{
								(1 - y)^2
								(1 - z)^2
							}
					 + 	\widetilde{O}\left( (1-x)^{1/2} (1 - y)^{-6} (1 - z)^{-6} \right)
				\right)
			\\
			& \hspace{ 10 mm } = 	
						( 1 - ( 1 - x ) )
				\left(
						\frac{
								4 q^4 s^6 y^2 z^2
							}{
								(1 - y)^2
								(1 - z)^2
							}
					 + 	\widetilde{O}\left( (1-x)^{1/2} (1 - y)^{-6} (1 - z)^{-6} \right)
				\right)
			\\
			& \hspace{ 10 mm } = 	
						\frac{
								4 q^4 s^6 y^2 z^2
							}{
								(1 - y)^2
								(1 - z)^2
							}
					 + 	\widetilde{O}\left( (1-x)^{1/2} (1 - y)^{-6} (1 - z)^{-6} \right)
			.
\end{split}
\label{twoTermT1expansion}
\end{align}

\noindent 
Using the above expansions in the identity of Proposition \ref{twoSharedRecursion} establishes \ref{claim T2 expansion}:
\begin{align*}
	T^2(x/r, y/2s, z/2s) 	
			& = 	
					2 \genFunFirstLeafShared(x/r, y/2s, z/2s) 
				+ 	ryz T^1(x/r, y/2s, z/2s), 
			\\
			& = 	
					\widetilde{O}\left( (1-y)^{-3}(1-z)^{-3} \right)
				+	\widetilde{O}\left( (1-y)^{-6}(1-z)^{-6} \right)
			\\
			& = 	
					\widetilde{O}\left( (1-y)^{-6}(1-z)^{-6} \right)
			.
\end{align*}

\noindent
Substituting (\ref{twoTermT1expansion}) and a variation of the expansion in Proposition \ref{scaledTstarProperties}(v),
\begin{align*}
	( \deriv{x}T^* )(x/r) 	
			& = 	
					( 2s )^{-1} \,
					\scaledTstar'( x )
			\\
			& = 	
					( 2s )^{-1}  \, 
					\left( 
							 qs \, 2^{-1} (1-x)^{-1/2} 
						+ 	u(x) 
					\right),
			\\
			& = 	
					q \, 4^{-1} (1-x)^{-1/2} + ( 2s )^{-1} u(x)
			,
\end{align*}

\noindent
into Proposition \ref{noneSharedIdentity}, we obtain our final expansion:
\begin{align*}
	T^0(x/r, y/2s, z/2s) 
			& = 	
					T^1(x/r, y/2s, z/2s) 
					( \deriv{x}T^* )(x/r) 
			\\
			& = 	
				\left(
						\frac{
								4
								q^4 s^6
								y^2 z^2
							}{
								(1 - y)^2
								(1 - z)^2
							}
					 + 	\widetilde{O}\left( (1-x)^{1/2} (1 - y)^{-6} (1 - z)^{-6} \right)
				 \right)
			\\
			& \quad \times
					\left(
							q 4^{-1} (1-x)^{-1/2} + ( 2s )^{-1} u(x)
					\right)
			\\
			& = 	
					q^5 s^6
					y^2 z^2
					\frac{
							(1-x)^{-1/2}
						}{
							(1 - y)^{2} 
							(1 - z)^{2}
						}
				+	\widetilde{O}\left( (1-y)^{-6}(1-z)^{-6} \right)
			.
\end{align*}

\noindent
To obtain \ref{claim T0 expansion}, we write
\begin{align*}
	C( y, z )
			& = 
					q^5 s^6 y^2 z^2
			\\
			& = 
					q^5 s^6 
				- 	q^5 s^6 ( 1 - y^2 ) 
				- 	q^5 s^6 y^2 ( 1 - z^2 )
			,
\end{align*}

\noindent
and observe that 
	$ C( 1, 1 ) \neq 0 $, 
	$ C( y, z ) = C( 1, 1 ) + O( | 1 - y | + | 1 - z | )$,  
	$ C( y, z ) $ is analytic in any product of $ \delta $-domains, and 
	$ q^4 s^4 = 2 $.
\end{proof}

\begin{prop}
\label{propTrivariateCoefficients}

The following statements hold:
\begin{enumerate}[label = (\roman*)]
	\item
	\label{claim trivariate expansion}
	$
		T(x/r, y/2s, z/2s) 	
				\approx 	 
						2qs^2 \, (1-x)^{-1/2}(1-y)^{-2} (1-z)^{-2}
	$, and

	\item
	\label{claim trivariate coefficient estimate}
    for every $ L > 0 $ there exists some $ M > 0 $ such that
	\begin{align*}
		\left|
			T_{n, k, k}
		-
			\frac{ 
        			2qs^2
				}{
					\sqrt{ \pi }
				}
			\,
			r^{ n - k }
			n^{ -1/2 }
			k^2
		\right|	
			\le
				M
				r^{ n - k }
				n^{ -1/2 }
        		k
	\end{align*}

	\noindent
	whenever $ k \le L \log n $ and $ k $ is large.

\end{enumerate}
\end{prop}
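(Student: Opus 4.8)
The plan is to derive part (i) by assembling the three expansions of Proposition \ref{trivariateExpansions} via the identity $T = T^0 + T^1 + T^2$, and then to extract part (ii) by feeding the resulting ``$\approx$'' relation into Proposition \ref{prop GW Lemma 3} and undoing the rescaling.

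For part (i), recall that, by definition, $\mc T^0_{n,k,l}$, $\mc T^1_{n,k,l}$, and $\mc T^2_{n,k,l}$ consist of the elements of $\mc T_{n,k,l}$ whose $l$-path and leftmost path share $0$, $1$, and $2$ vertices respectively, and that these exhaust $\mc T_{n,k,l}$ (in a tree whose non-root vertices all have at least two children, a path joining two consecutive leaves and the root-to-$\ell_1$ path can meet in at most two vertices). Hence $T(x,y,z) = T^0(x,y,z) + T^1(x,y,z) + T^2(x,y,z)$, and the same identity survives the substitution $(x,y,z) \mapsto (x/r, y/2s, z/2s)$. By Proposition \ref{trivariateExpansions}\ref{claim T0 expansion} the summand $T^0(x/r, y/2s, z/2s)$ decomposes as $C(y,z)(1-x)^{-1/2}(1-y)^{-2}(1-z)^{-2}$ plus polynomial corrections plus an error term $\widetilde O\big((1-x)^{-\alpha'}(1-y)^{-\beta'}(1-z)^{-\beta'}\big)$ with $\alpha' < 1/2$ and $C(1,1) = 2qs^2 \neq 0$, while by Proposition \ref{trivariateExpansions}\ref{claim T1 expansion} and \ref{claim T2 expansion} the other two summands are $\widetilde O\big((1-y)^{-6}(1-z)^{-6}\big)$, i.e. of the same type but with exponent $0 < 1/2$ on $1-x$. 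Since a finite sum of functions of the form $\widetilde O(\cdots)$ is again of this form (take the larger exponents on the common $\mc R$-domain), the three error contributions combine into a single $\widetilde O\big((1-x)^{-\alpha''}(1-y)^{-\beta''}(1-z)^{-\beta''}\big)$ with $\alpha'' = \max(\alpha',0) < 1/2$, and the $C(y,z)$ and polynomial parts are untouched; every requirement in the definition of ``$\approx$'' therefore persists, which gives part (i).

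For part (ii), apply Proposition \ref{prop GW Lemma 3} to $f(x,y,z) = T(x/r, y/2s, z/2s)$ with $c = 2qs^2$, $\alpha = 1/2$ (not a non-positive integer, as required), $\beta_1 = \beta_2 = 2$, and $k_1 = k_2 = k$. Using $\Gamma(1/2) = \sqrt\pi$, $\Gamma(2) = 1$, and absorbing the factor $2$ from $\sum_j k_j^{-1}$ into the constant, this yields
$$
	\left| [x^n y^k z^k]\, T(x/r, y/2s, z/2s) \;-\; \frac{2qs^2}{\sqrt\pi}\, n^{-1/2}\, k^2 \right| \;\le\; M\, n^{-1/2}\, k
$$
whenever $k \le L \log n$ and $k$ is large. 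Finally, from $T(x/r, y/2s, z/2s) = \sum_{n,k,l} T_{n,k,l}\, r^{-n}(2s)^{-k-l} x^n y^k z^l$ and the identity $2s\sqrt r = 1$ of (\ref{rsIdentities}), which gives $(2s)^{-2k} = r^{k}$, we get $[x^n y^k z^k]\, T(x/r, y/2s, z/2s) = r^{k-n}\, T_{n,k,k}$; multiplying the displayed bound by $r^{n-k}$ produces exactly the estimate in \ref{claim trivariate coefficient estimate}.

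Because essentially all of the analytic work is already done in the preceding propositions, there is no genuine obstacle here; the two places that demand care are the bookkeeping for ``$\approx$'' (that adjoining the lower-order terms $T^1 + T^2$ leaves the relation intact, which rests on $\max(\alpha',0) < 1/2$ together with closure of the $\widetilde O$-notation under addition) and the tracking of rescaling constants, in particular the cancellation $r^{-n}(2s)^{-2k} = r^{k-n}$ coming from $2s\sqrt r = 1$.
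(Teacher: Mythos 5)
Your proposal is correct and follows essentially the same route as the paper: part (i) by summing the three expansions of Proposition \ref{trivariateExpansions} via $T = T^0 + T^1 + T^2$, and part (ii) by applying Proposition \ref{prop GW Lemma 3} and undoing the rescaling with $(2s)^{2k} = r^{-k}$. The only difference is that you spell out details the paper leaves implicit (why the decomposition is exhaustive and why the $\widetilde{O}$ error terms combine), which is fine.
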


\begin{proof}

    Combining Proposition \ref{trivariateExpansions} with the relationship 
    $
    	T(x, y, z)
    		=
    				T^0(x, y, z)
    			+	T^1(x, y, z)
    			+	T^2(x, y, z)
    $
    establishes \ref{claim trivariate expansion}.
    Now we apply Proposition \ref{prop GW Lemma 3}.
    For $ L > 0 $, there exists some $ M > 0 $ such that
	$$
		\left|
			[ x^n y^k z^k ]
        	T(x/r, y/2s, z/2s)
		-
			2qs^2
			\,
			\frac{ 
					n^{ -1/2 }
				}{
					\Gamma( \frac{1}{2} )
				}
			\frac{ 
					k^2
				}{
					\Gamma( 2 )^2
				}
		\right|	
			\le
				M
				n^{ -1/2 }
        		k^2
        		\,
				\frac{2}{k}
	$$

	\noindent
	whenever $ k \le L \log n $ and $ k $ is large.
	Noting that
	$
    	T_{n, k, k} 	
    			= 	
    				r^n 
    				(2s)^{2k} 
    				[x^n y^k z^k] T(x/r, y/2s, z/2s) 
	$
	and
	$
		(2s)^{2k} 
			=
				r^{ -k }
	$
	concludes the proof.
\begin{align*}
	T_{n, k, k} 	
			& = 	
					r^n 
					(2s)^{2k} 
					[x^n y^k z^k] T(x/r, y/2s, z/2s) 
			\\
			& = 	
					r^n 
					(2s)^{2k} 
					\,
					2 q s^2 
					\frac{n^{-1/2}}{\Gamma(\frac{1}{2})} 
					\frac{k^2}{\Gamma(2)^2} 
					\left( 1 + O( 1/k ) \right) 
			\\
			& = 	
					r^{n-k} 
					\,
					2 q s^2 
					\pi^{-1/2} 
					n^{-1/2} 
					k^2 
					\left( 1 + O(1/k) \right) 
\end{align*}
\end{proof}

We now have the coefficient estimates we need to prove Lemma \ref{momentEstimates}.

\begin{proof}[Proof of Lemma \ref{momentEstimates}]	



Let $p \in (0, r^{1/2})$ and $ M $ be the constant obtained by applying Proposition \ref{propBivariateCoefficients}\ref{claim bivariate coefficient estimate all k}.
Combining the bound in that result with Proposition \ref{propUnivariateCoefficients}, 
we have that
\begin{align*}
	(n + 1) \frac{T_{n, k}}{T_n}
						& \le 	(n + 1) 
								\frac{  
										M n^{-3/2} p^{-k} r^n
									}{ 
										\frac{ q s^2 }{ 2 \sqrt{ \pi } } 
										n^{-3/2} r^n (1 + O(1/n))
								} 
						\\
						& \le 	M' n  p^{-k}
\end{align*}

\noindent 
for some $ M' > 0 $, large $ n $, and $ k \ge 2 $.
To obtain \ref{first moment estimate, all k} from this, 
	use the bijection between dissections and dual trees and Lemma \ref{momentsAndDissectionClasses}\ref{first moment identity} to identify the left hand side as $ \mbb{E}(\zeta_k) $ and observe that $ b = r^{1/2} $.

Now let $ L > 0 $ and $ M $ be the constant obtained by applying Proposition \ref{propBivariateCoefficients}\ref{claim bivariate coefficient estimate log k}.
Combining the bound in that result with Proposition \ref{propUnivariateCoefficients}, we find that
	$$
		\frac{
				( n + 1 ) b^k
			}{
				2 n
			}
		\left|
			\frac{ 
					T_{n, k}
				}{
					T_n
            	}
		-
			\frac{
        			\tfrac{ 
        					qs^2
        				}{
        					\sqrt{\pi}
        				}
					r^{n-k/2}
					n^{ -3/2 }
					k
				}{
					T_n
				}
		\right|	
				\le
						\frac{
							( n + 1 ) b^k 
							M
							r^{n-k/2}
        					n^{ -3/2 }
    					}{
							2 n \,
							\frac{ q s^2 }{ 2 \sqrt{ \pi } } 
    						r^n n^{-3/2} (1 + O(1/n))
    					}
	$$

	\noindent
	whenever $ k \le L \log n $ and $ k $ is large. 
    Making use of the identity $ b = r^{1/2} $, Lemma \ref{momentsAndDissectionClasses}\ref{first moment identity}, and the bijection between dissections and dual trees, this inequality implies that
    $$
    		\left|
    			\frac{b^k\mbb{E}(\zeta_k)}{2n}
    		-
    			\frac{
            			n + 1
					}{
						n + O(1)
					}
				\,
				k
    		\right|	
				\le
        				M'
    $$

	\noindent
	for some $ M' > 0 $ whenever $ k \le L \log n $ and $ k $ is large.
	Since
    $$
    		\left|
    			k
    		-
    			\frac{
            			n + 1
					}{
						n + O(1)
					}
				\,
				k
    		\right|	
    $$

	\noindent
	can also be bounded by a constant under the same conditions, this establishes \ref{first moment estimate, log k}.

	To establish \ref{second moment estimate}, we first use the bijection between dissections and dual trees and Lemma \ref{momentsAndDissectionClasses} to write
	$$
		k 
		\left|
    		\frac{ \mbb{E}(\zeta_k(\zeta_k - 1)) }
    		{\mbb{E}(\zeta_k)^{ 2 }}
		-
			1
		\right|
			=	
        		k
				\left|
            		\frac{ 1 }{ n + 1 }
					\frac{ 
							T_{ n, k, k }
							T_n
						}{
							(T_{n, k})^2
						}
        		-
        			1
        		\right|
			.
	$$
	
	\noindent
	Now let $ L > 0 $ and $ a $ be the sign of the above difference.
	Applying Propositions \ref{propUnivariateCoefficients}, \ref{propBivariateCoefficients}, and \ref{propTrivariateCoefficients}, we find that there exists some $ M > 0 $ such that
	\begin{align*}
   		&
		k
		\left|
        	\frac{ 1 }{ n + 1 }
			\frac{ 
					T_{ n, k, k }
					T_n
				}{
					(T_{n, k})^2
				}
    		-
    			1
    	\right|
		\\
			& \qquad 
			\le
				k
				a
        		\left(
                	\frac{ 1 }{ n + 1 }
        			\frac{ 
							\left(
                    			\tfrac{ 
                            			2qs^2
                    				}{
                    					\sqrt{ \pi }
                    				}
                    			\,
                    			r^{ n - k }
                    			n^{ -1/2 }
                    			k^2
    							( 1 + a \frac{ M }{ k }  )
							\right)
        					\left( \tfrac{qs^2}{2 \sqrt{\pi}}r^n n^{-3/2} ( 1 + a \frac{ M }{ n } ) \right)
        				}{
                            \left(
                    			\tfrac{ 
                    					qs^2
                        			}{
                        				\sqrt{\pi}
                        			}
                                r^{n-k/2}
                        		n^{ -3/2 }
                        		k
    							(
                        			1
    							-
                        			a \frac{ M }{ k } 
                        		)
                    		\right)^2
        				}
            		-
            			1
            	\right)
			\\
			& \qquad =
				k
				a
        		\left(
                	\frac{ n }{ n + 1 }
        			\frac{ 
							( 1 + a \frac{ M }{ k } )
        					( 1 + a \frac{ M }{ n } )
        				}{
							(
                    			1
							-
                    			a \frac{ M }{ k } 
                    		)^2
        				}
            		-
            			1
            	\right)
	\end{align*}

	\noindent
	whenever $ k \le L \log n $ and $ k $ is large.  This final quantity is easily seen to be bounded since $ k \le L \log n $, which concludes the proof.
\end{proof}

\section*{Acknowledgement}
This material is based upon work supported by the National Science Foundation under Grant No. DMS-1855568.

\bibliographystyle{plain}
\bibliography{Bibliography/bibPolygonDissections}

\end{document}